\newcommand{\stilde}{{\tilde{s}}}
\newcommand{\constant}{\mbox{\it\~c}}
\theoremstyle{plain}
\newtheorem{theorem}{Theorem}%[section]
\newtheorem{lemma}{Lemma}[section]
\newtheorem{externaltheorem}[lemma]{Theorem}
\newtheorem*{theoremC}{Theorem~\ref{5}}
\newtheorem*{theoremT}{Theorem~\ref{4}}
\newtheorem*{theoremD}{Theorem~\ref{2}}
\newtheorem*{theoremU}{Theorem~\ref{1}}
\newtheorem*{theoremCi}{Theorem~\ref{3}}
\theoremstyle{definition}
\newtheorem{definition}[lemma]{Definition}
\newtheorem*{notation}{Notation}
\theoremstyle{remark}
\newtheorem{remark}[lemma]{Remark}
\renewcommand{\phi}{\varphi}
\newcommand{\N}{{\mathbb{N}}}
\newcommand{\Q}{{\mathbb{Q}}}
\newcommand{\R}{{\mathbb{R}}}
\newcommand{\measure}[1]{\mu\!\left( #1 \right)}
\newcommand{\ceil}[1]{\lceil #1 \rceil } 
\newcommand{\bigceil}[1]{\left\lceil #1 \right\rceil }
\newcommand{\digits}[2]{\mathcal{L}(#1,#2)} % #1 is the alphabet and #2 is the length of words
\renewcommand{\L}{{\mathcal{L}}}
\renewcommand{\>}{\rangle}
\newcommand {\base}[2]{\langle{#1};{#2}\rangle}
\newcommand{\expa}[1]{\{#1\}}
\newcommand{\occ}{occ}
\newcommand{\card}{\mbox{\raisebox{.13em}{{$\scriptstyle \#$}}}}
\newcommand{\Tmax}{T_{\mbox{max}}}
\newcommand{\betterk}{k}
\newcommand{\betterN}{N}
\newcommand{\kbar}{{\overline{k}}}
\newcommand{\nextr}{r}
\newcommand{\bSigma}{{\mathbf\Sigma}}
\newcommand{\bPi}{{\mathbf\Pi}}
\begin{document}

\title{On the Normality of Numbers in Different Bases}

\author{
 \begin{tabular}{ccc}
Ver\'onica Becher&\ \  \ \ \ \ \  \ & Theodore A.~Slaman\\
\small Universidad de Buenos Aires&&\small  University of California Berkeley\\
\small vbecher@dc.uba.ar&&\small  slaman@math.berkeley.edu
  \end{tabular}
}

\date{March 22, 2013}

\maketitle

\section{Introduction}

We ask whether normality to one base is related to normality to another.  Maxfield in 1953 proved
that a real number is normal to a base exactly when it is normal to every base multiplicatively
dependent to that base (two numbers are multiplicatively dependent when one is a rational power of the
other.)  \cite{Sch61} showed that this is the only restriction on the set of bases to which a real
number can be normal. He proved that for any given set of bases, closed under multiplicative
dependence, there are real numbers normal to every base from the given set and not normal to any
base in its complement.  This result, however, does not settle the question of whether the
discrepancy functions for different bases for which a real number is normal are pairwise
independent.  Nor does it answer whether the set of bases for which a real number is normal plays a
distinguished role among its other arithmetical properties.

We pose these problems by means of mathematical logic and descriptive set theory.  The set of real
numbers that are normal to a least one base is located in the fourth level of the Borel hierarchy.
Similarly, the set of indices for computable real numbers that are normal to at least one base is located at
the fourth level of the arithmetic hierarchy.  In Theorem~\ref{1} we show that from both points of
view, the property that a real number is normal to at least one base is complete at the fourth level
($\bSigma_4^0$ and $\Sigma^0_4,$ respectively).  This result settles a question in \cite{Bug12} and
confirms a conjecture of A.~Ditzen~\cite[see][]{KiLin94}.  We obtain the result by first establishing
in Theorem~\ref{2}, that for any set at the third level of the arithmetic hierarchy ($\Pi^0_3$), there is a
computable real number which is normal exactly to the bases multiplicatively dependent to elements
of that set.  Theorem~\ref{3} exhibits a fixed point: for any property of bases expressed at the
third level of the arithmetic hierarchy ($\Pi^0_3$) and closed under multiplicative dependence, there is a real
number $\xi$ such that the bases which satisfy the property relative to $\xi$ are exactly those for
which $\xi$ is normal.

Theorem \ref{4} shows that the discrepancy functions for different bases can go to zero
independently.  We construct absolutely normal real numbers such that their discrepancy functions
for a given base $s$ converge to zero arbitrarily slowly and such that their discrepancies for all
the bases multiplicatively independent to $s$ are eventually dominated by a single computable bound.
In contrast, the real numbers constructed by \cite{Sch61} are not normal to a given  base $s$ and the discrepancy
functions for all bases multiplicatively independent to $s$ converge to zero at a prescribed rate.
With a different proof, \cite{BMP85} extended Schmidt's result and then \cite{MorPea88} gave explicit
bounds for the rate obtained with their method.
 In our construction the nonconforming behavior of the constructed real number 
 with respect to base $s$ appears even though it is normal to base $s$.

Theorem~\ref{5} sharpens Theorem~1 in \cite{Sch61}.  We construct  
a real number that is normal for all elements in a given set and denies even simple normality 
to all other elements, addressing an issue raised in \cite{BMP85}.

Normality is an almost-everywhere property of the real numbers: the set of normal numbers has
Lebesgue measure one.  Normality in some bases and not all of them is also an almost-everywhere
property, albeit not in the sense of Lebesgue. Consider the Cantor set $C_s$ obtained by omitting
the last digit (or two) in the base $s$ expansions of real numbers ($s$ greater than $2$).  Clearly, no element
of $C_s$ is simply normal to base $s$.  However, viewed from the perspective of the uniform measure
on this Cantor set, \cite{Sch60} shows that the subset of $C_s$ whose elements are normal to every
base $r$ multiplicatively independent to $s$ has measure one.

Our focus is on constructing real numbers and maintaining independent control over their discrepancy
functions for multiplicatively independent bases.  Since almost every element of $C_s$ is normal to
base $r$, almost every sufficiently long finite initial segment of a real in $C_s$ has small
discrepancy from normal in base $r$.  It is our task to convert this observation into methods of
constructing real numbers by iteratively extending their expansions in various bases.  The first
part of our task is to give computable bounds on discrepancy and estimates on how quickly
discrepancy for base $r$ decreases almost everywhere in $C_s$.  The second part is to convert these
finitary bounds into modules for constructions.  The typical module lowers discrepancy in bases $r$
from a finite set $R$ and increases discrepancy in a multiplicatively independent base $s$.  It is
important that the estimates on discrepancy be applicable in any basic open neighborhood in $C_s$ so
that the modules can be used as any finite point in the construction.

\section{Theorems}

\begin{notation}
  A~{\em base} is an integer greater than or equal to $2$.  For a real number $\xi$, we use
  $\expa{\xi}$ to denote its fractional part.  We write $\vec{\xi}$ to denote a sequence and $\xi_j$
  to denote the $j$th element of $\vec{\xi}$.  If~$\vec{\xi}$ is finite with $N$ elements we write
  it as $(\xi_1,\ldots, \xi_N)$.  For a subinterval $I$ of $[0,1]$, $\measure{I}$ is its measure,
  equivalently its length.  For a finite set $S$, $\card S$ is its cardinality.  We often drop the
  word {\em number} and just say {\em a real} or {\em a rational } or {\em an integer}.

\end{notation}

We recall the needed definitions and then state our results.
The usual presentation of the property of normality  to a given base 
for a real number is  in terms of counting occurrences of blocks of digits in its expansion in that base
(\cite{Bug12,KuiNie74}).  Absolute normality is normality to all bases.  
We define normality in terms of discrepancy.  See either of the above references for a proof of Wall's Theorem, which
establishes the equivalence.

\begin{definition}\label{2.1}
The \emph{discrepancy} of a sequence $\vec{\xi}=(\xi_1,\dots, \xi_N)$ of real numbers in the unit
interval is
  \[
  D(\vec{\xi})=\sup_{0\leq u< v\leq 1}
  \Bigl|
  \frac{\card\{n:1 \leq n \leq N, u \leq \xi_n < v\}}{N}-(v-u)
  \Bigr|.
  \]
\end{definition}

When we refer to a sequence by specifying its elements, we will write $D(\xi_1,\dots, \xi_N)$,
rather than $D((\xi_1,\dots, \xi_N))$.

\begin{definition}\label{2.2}
  Let $r$ be a base.  A real number $\xi$ is \emph{normal to base $r$} if and only if
  $\lim_{N\to\infty}D(\expa{r^j\xi}:0\leq j< N)=0$.  \emph{Absolute normality}  is
  normality to every base.
\end{definition}

  A formula in the language of arithmetic 
 is $\Pi^0_0$ and $\Sigma^0_0$ if
  all of its quantifiers are bounded.  It is $\Sigma^0_{n+1}$ if it has the form $\exists x\, \theta$
  where $\theta$ is $\Pi^0_n$ and it is $\Pi^0_{n+1}$ if it has the form $\forall x\, \theta$ where
  $\theta$ is $\Sigma^0_n$.  A subset $A$ of $\N$ is $\Sigma^0_n$ (respectively, $\Pi^0_n$) if there
  is a $\Sigma^0_n$ (respectively, $\Pi^0_n$) formula $\phi$ such that for all $n$, $n\in A$ if and
  only if $\phi(n)$ is true.
  A $\Sigma^0_n$  subset $A$ of the natural numbers is $\Sigma^0_n$-complete if there is a computable
  function $f$ mapping $\Sigma^0_4$ formulas to natural numbers such that for all $\phi$, $\phi$ is
  true in the natural numbers if and only if $f(\phi)\in A.$

  The Borel hierarchy for subsets of $\R$ with the usual topology 
  states that a set $A$ is $\bSigma^0_1$ if and only if $A$  is open
  and $A$ is $\bPi^0_1$ if and only if $A$ is closed.  
  $A$ is $\bSigma^0_{n+1}$ if and only if it is a
  countable union of $\bPi^0_{n}$ sets and  $A$  is $\bPi^0_{n+1}$ if and only if it is a countable
  intersection of $\mathbf\Sigma^0_n$ sets.  
  By an important theorem, a $\mathbf\Sigma^0_n$ subset of $\R$ is $\mathbf\Sigma^0_n$-complete if
  and only if it is not $\mathbf\Pi^0_n$.

\begin{theorem}\label{1}
(1) The set  of indices for computable real numbers which are normal at least one base is $\Sigma^0_4$-complete. 
(2) The set of real numbers that are normal to at least one base is $\mathbf \Sigma^0_4$-complete.
\end{theorem}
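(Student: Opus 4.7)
My plan is to establish the upper bound by counting quantifiers in the definition of normality, and to obtain the matching hardness by reducing from an arbitrary $\Sigma^0_4$ (respectively $\bSigma^0_4$) set via a uniform form of Theorem~\ref{2}. For the upper bound, fix a real $\xi$ and base $r$: normality to $r$ unfolds as
\[
\forall k\;\exists N_0\;\forall N\geq N_0\;\bigl[D(\expa{r^j\xi}:0\leq j<N)\leq 1/k\bigr].
\]
For each fixed $k$, $r$, $N$, the discrepancy of a finite sequence is a piecewise-linear function of the values $\expa{r^j\xi}$, so the innermost condition is $\Pi^0_1$ when $\xi$ is a computable real and defines a Borel set of low complexity in $\xi$ when $\xi$ is arbitrary. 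Counting alternations yields $\Pi^0_3$ (respectively $\bPi^0_3$) for normality to a single base, and hence $\Sigma^0_4$ (respectively $\bSigma^0_4$) for the existential over $r$.

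For the hardness in part~(1), let $S\subseteq\N$ be an arbitrary $\Sigma^0_4$ set. Write $n\in S\iff\exists m\,\psi(n,m)$ with $\psi\in\Pi^0_3$, and uniformly in $n$ form the $\Pi^0_3$ set $A_n=\{m+2:\psi(n,m)\}\subseteq\{2,3,\ldots\}$. A uniform version of Theorem~\ref{2} then supplies a computable function $f$ such that $f(n)$ is an index for a computable real $\xi_n$ whose set of bases of normality equals the multiplicative closure of $A_n$. Then $n\in S$ iff $A_n\neq\emptyset$ iff $\xi_n$ is normal to at least one base, giving the desired computable many-one reduction from $S$ to the target set.

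For part~(2), I would invoke the cited dichotomy that a $\bSigma^0_4$ subset of $\R$ is $\bSigma^0_4$-complete iff it is not $\bPi^0_4$, and show that the set of reals normal to at least one base is not $\bPi^0_4$. If that set belonged to $\Pi^0_4(y)$ for some oracle $y$, then relativizing the reduction of part~(1) to $y$ would make the set of indices for $y$-computable reals normal to some base simultaneously $\Sigma^0_4(y)$-complete and $\Pi^0_4(y)$, contradicting the relativized hierarchy theorem; alternatively, a continuously parametrized form of Theorem~\ref{2} produces a direct continuous reduction from a $\bSigma^0_4$-complete subset of $2^\omega$. The main obstacle in both parts is to verify that Theorem~\ref{2} is proved uniformly in its $\Pi^0_3$ specification and relativizes to arbitrary oracles; this should be implicit in any direct construction, but must be extracted explicitly for the reductions above to go through.
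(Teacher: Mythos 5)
Your proposal follows essentially the same route as the paper: quantifier-counting for the upper bound, reduction from a $\Sigma^0_4$ set via the uniformity of Theorem~\ref{2} for part~(1) hardness, and relativization of that reduction (equivalently, the continuously parametrized form of Theorem~\ref{2}) for part~(2). The only slip is in the definition of $A_n$: Theorem~\ref{2} as stated requires its input to be a $\Pi^0_3$ subset of $M$, the set of minimal representatives of multiplicative-dependence classes, and $\{m+2:\psi(n,m)\}$ need not lie in $M$ (for instance $4\notin M$). The paper avoids this by composing with a fixed computable enumeration $s_1,s_2,\dots$ of $M$ and feeding $\phi^*$ with $\phi^*(s_w)\iff\phi(w)$ into Theorem~\ref{2}; you can patch your version the same way, or replace each element of $A_n$ by its minimal representative (a computable operation preserving $\Pi^0_3$). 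With that adjustment, the argument is correct and matches the paper.
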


\begin{remark}
A routine extension of the proof shows that the set of real numbers which are normal to
infinitely many bases is $\bPi^0_5$-complete.  Expressed in terms of the complement, the set of real numbers
which are normal to only finitely many bases is $\bSigma^0_5$-complete.
\end{remark}

Let $M$ be the set of minimal representatives of the multiplicative dependence equivalence
classes. Our proof of Theorem~\ref{1} relies on the following.

\begin{theorem}\label{2}
  For any $\Pi^0_3$ subset $R$ of $M$ there is a computable real number $\xi$ such that for all $r$
  in $M$, $r\in R$ if and only if $\xi$ is normal to base $r$.  Furthermore, $\xi$ is computable
  uniformly in the $\Pi^0_3$ formula that defines $R$.
\end{theorem}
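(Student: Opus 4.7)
The plan is to construct $\xi$ by a computable stage-wise construction, repeatedly extending an initial segment of $\xi$'s expansion by concatenating outputs of the ``modules'' alluded to at the end of the introduction, with a priority scheme matched to the $\Pi^0_3$-structure of $R$. Enumerate $M=\{m_1,m_2,\ldots\}$ and write the defining formula as $r\in R \iff \forall i\,\exists j\,\forall k\,\theta(r,i,j,k)$ for a computable $\theta$. For each $r=m_n$ and $i\in\N$, the inner $\Sigma^0_2$ predicate $P(r,i) := \exists j\,\forall k\,\theta(r,i,j,k)$ has a computable approximation $j_s(r,i)$, defined as the least $j\le s$ such that no $k\le s$ falsifies $\theta(r,i,j,k)$ (and $\infty$ otherwise), which is eventually constant and finite iff $P(r,i)$ holds. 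Assign to each $n$ the requirement $\mathcal R_n$: \emph{$\xi$ is normal to $m_n$ iff $m_n\in R$}.

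\textbf{Modules.} The construction relies on two building blocks. A \emph{normal module} takes a current initial segment $\sigma$, a finite $S\subset M$ and a tolerance $\varepsilon>0$, and produces an extension after which the discrepancy in each base of $S$ is below $\varepsilon$. A \emph{non-normal module} for $s\in M$ produces an extension after which the base-$s$ discrepancy exceeds a fixed $\varepsilon_s>0$. Because distinct elements of $M$ are pairwise multiplicatively independent, the two can be combined inside a Cantor set $C_s$ of the type discussed by Schmidt: within $C_s$, almost every point is normal to every base multiplicatively independent of $s$, while no point is even simply normal to $s$---this is the engine that lets us raise discrepancy in $s$ while keeping it low in every other base of $M$.

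\textbf{Construction and verification.} Use a tree of strategies with one node per pair $(n,i)$ and two outcomes corresponding to the truth value of $P(m_n,i)$. At stage $s$, compute the length-$s$ approximation to the true path from $j_s$, let $S_s$ be the set of $m_n$ with $n\le s$ for which every visited subrequirement $(n,i)$ with $i\le s$ has outcome ``true'', and let $T_s=\{m_1,\ldots,m_s\}\setminus S_s$. Extend the current segment by a combined module that drives the base-$r$ discrepancy below $1/s$ for every $r\in S_s$ and, by a round-robin over $T_s$, raises the discrepancy in one base of $T_s$ above its target. Along the true path, for each $n$ either $m_n\in R$, so $m_n\in S_s$ from some stage on and the normal actions drive the base-$m_n$ discrepancy to $0$; or some $i_0$ witnesses $\neg P(m_n,i_0)$, in which case $m_n\in T_s$ cofinitely often and the round-robin raises the base-$m_n$ discrepancy above $\varepsilon_{m_n}$ infinitely often. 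Uniformity of $\xi$ in the $\Pi^0_3$ formula is immediate from the description of the stages.

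\textbf{Main obstacle.} The technical heart is constructing the combined modules so that the non-normal action for $m_n\in T_s$ does not destroy the normal estimates for the bases of $S_s$, and so that the cumulative effect of infinitely many stages keeps each base-$r$ discrepancy ($r\in R$) tending to $0$ despite the interleaved non-normal actions for other bases. This requires effective bounds on the rate at which the base-$r$ discrepancy decays on $C_{m_n}$, uniformly across every basic open neighborhood of $C_{m_n}$, and the conversion of these finitary bounds into modules usable at any finite stage---exactly the program announced in the last paragraph of the introduction.
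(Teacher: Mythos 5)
Your high-level plan—modules that lower discrepancy for a finite set of bases while raising it in one multiplicatively independent base, working inside a Cantor-type restriction, glued by a $\Pi^0_3$ priority framework—is the same spirit as the paper's construction, and your identification of the analytic obstacle (effective, neighborhood-uniform bounds on how discrepancy decays inside $C_s$) is exactly what the paper's Lemmas 3.9, 3.11, 3.14, 3.17 and 3.19 supply. The organizational scheme is different, though: the paper does not use a tree of strategies. It fixes an enumeration of $M$ in which every base appears infinitely often, attacks a single base at a time, and binds the three quantifiers of $\phi$ directly to the three quantifiers in the $\Pi^0_3$ definition of normality; the crucial parameter is a ``level'' $x$ computed from the current approximation to the inner $\Sigma^0_2$ predicate, and the attack strength (the discrepancy floor $1/(2s^{k})$, with $k$ chosen via Lemma 3.14 for $\epsilon=1/x$) decreases as $x$ increases.

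That last feature is precisely where your proposal has a genuine gap. You attack $m_n$ with a \emph{fixed} target $\varepsilon_{m_n}$, and your verification for $m_n\in R$ rests on the claim ``$m_n\in S_s$ from some stage on,'' so that the non-normal actions for $m_n$ are only finitely many. This claim is not justified, and in general it is false: $S_s$ requires all visited $(n,i)$ with $i\le s$ to have outcome ``true,'' but as $s$ grows you keep adding new $i$'s, and the marker $j_s(m_n,i)$ for a freshly considered $i$ typically has not yet settled—indeed for a $\theta$ whose true witness $j^*_i$ grows with $i$ (say $j^*_i=2^i$), at every stage $s$ there is some $i\le s$ with $j_s(m_n,i)=\infty$, so $m_n\notin S_s$ for all $s$ even though $m_n\in R$. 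Consequently $m_n$ is attacked infinitely often with the unchanging strength $\varepsilon_{m_n}$, which would prevent its discrepancy from tending to $0$, contradicting normality. To repair this you must make the attack target shrink with the depth/level of the subrequirement being guessed wrong—this is what the paper achieves by letting the floor $1/(2s^{k})$ go to $0$ as the level $x$ (hence $k$) goes to infinity whenever $\phi(s)$ is true, while keeping $x$ and $k$ bounded exactly when $\phi(s)$ fails. A secondary slip: for $m_n\notin R$ you write that $m_n\in T_s$ ``cofinitely often,'' but with a moving-marker approximation to a failing $\Sigma^0_2$ predicate you can only guarantee the $\infty$-outcome is taken \emph{infinitely} often, which is what your round-robin actually needs.
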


Theorem~\ref{3} exhibits a fixed point: the real $\xi$ appears in the $\Pi^0_3$ definition of its
input set.  It asserts that the set of bases for which $\xi$ is normal can coincide with any other
property of elements of $M$ definable by a $\Pi^0_3$ formula relative to $\xi$.  Thus, the set of
bases for normality can be arbitrary, nothing distinguishes it from other $\Pi^0_3$ predicates on
$M$.  As a subset of $\N$ its only distinguishing feature is that it is closed under multiplicative
dependence.

\begin{theorem}\label{3}
For any $\Pi^0_3$ formula $\phi$ there is a computable real number $\xi$ such that for any base $r\in M$, 
$\phi(\xi,r)$ is true if and only if  $\xi$ is normal to base $r$.
\end{theorem}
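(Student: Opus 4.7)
The plan is to deduce Theorem~\ref{3} from Theorem~\ref{2} by a Kleene recursion-theorem fixed point. First I would extract from Theorem~\ref{2} the fact that the map sending a $\Pi^0_3$ formula $\psi(r)$ defining a subset of $M$ to the index of the computable real it constructs is itself computable; this is exactly the uniformity clause. Call this map $F$, so that $\xi_{F(\psi)}$ is normal to base $r\in M$ if and only if $\psi(r)$ holds.

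Next, given the ambient $\Pi^0_3$ formula $\phi(x,r)$, I would form for each index $e$ the $\Pi^0_3$ formula $\psi_e(r)=\phi(\xi_e,r)$, where $\xi_e$ denotes the computable real computed by program $e$ (taken to be, say, $0$ when $e$ is not a valid index for a computable real). Because the rational approximations to $\xi_e$ are uniformly computable from $e$, a $\Pi^0_3$ predicate with $\xi_e$ as parameter is $\Pi^0_3$ uniformly in $e$, so $g(e)=F(\psi_e)$ is a total computable function of $e$.

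Now I would apply Kleene's recursion theorem to $g$ to obtain an index $e^{*}$ with $\xi_{e^{*}}=\xi_{g(e^{*})}=\xi_{F(\psi_{e^{*}})}$. The defining property of $F$ then yields that for every $r\in M$, the real $\xi_{e^{*}}$ is normal to base $r$ if and only if $\psi_{e^{*}}(r)$, that is, if and only if $\phi(\xi_{e^{*}},r)$ holds. Setting $\xi=\xi_{e^{*}}$ gives the real required by the theorem.

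The main obstacle I anticipate is formal rather than combinatorial: one must verify that substituting a computable real into a $\Pi^0_3$ formula $\phi(x,r)$ with a real parameter preserves the $\Pi^0_3$ quantifier structure \emph{uniformly} in an index for that real. This amounts to rewriting the atomic subformulas that depend on $\xi_e$ as bounded arithmetical conditions on the computable sequence of rational approximations to $\xi_e$, which does not raise the arithmetic complexity of $\psi_e$. Once this bookkeeping is carried out, the fixed-point construction is a one-line invocation of Kleene's theorem, and Theorem~\ref{3} contributes no genuinely new combinatorial content beyond what is already packaged inside Theorem~\ref{2}.
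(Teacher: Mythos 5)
Your proposal is correct and takes essentially the same route as the paper: extract the uniformity clause of Theorem~\ref{2} to get a computable map $F$ from $\Pi^0_3$ formulas to indices of computable reals, then close the loop with the Kleene recursion theorem. The one place the paper is more careful is in handling indices $e$ for which $\Psi_e$ is partial: rather than stipulating $\xi_e=0$ when $e$ is not a valid index (which is not itself a computable case distinction), the paper folds the $\Pi^0_2$ predicate ``$\Psi_e$ is total'' as a conjunct into the $\Pi^0_3$ formula fed to Theorem~\ref{2}, so that $e\mapsto\psi_e$ is genuinely uniform on all of $\N$; since $F$ always outputs an index for a total computable real, the fixed point $e^*$ is then automatically a valid index and the conjunct is satisfied there.
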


Theorem~\ref{4} illustrates the independence between the discrepancy functions for multiplicatively
independent bases by exhibiting an extreme case, that all but one of the bases behave predictably
and the other is arbitrarily slow.

\begin{theorem}\label{4}
  Fix a base $s$.  There is a computable function $f:\N\to\Q$ monotonically decreasing to
  $0$ such that for any function $g:\N\to\Q$ monotonically decreasing to $0$ there is an
  absolutely normal real number $\xi$ whose discrepancy for base $s$ eventually dominates $g$ and
  whose discrepancy for each base multiplicatively independent to $s$ is eventually dominated by $f$.
  Furthermore, $\xi$ is computable from $g$.
\end{theorem}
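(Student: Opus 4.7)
My approach is to build $\xi$ by extending its base-$s$ expansion in a sequence of stages that alternate ``bad'' blocks designed to inflate the base-$s$ discrepancy above $g$ with longer ``recovery'' blocks chosen inside a cylinder of the Cantor set $C_s$ discussed in the introduction. The universal bound $f$ is extracted, independently of $g$, from the quantitative measure-theoretic lemma behind Schmidt's theorem (refined by \cite{BMP85} and \cite{MorPea88}) controlling the rate at which the base-$r$ discrepancy of a random element of $C_s$ converges to zero, for every base $r$ multiplicatively independent to $s$. After enumerating those bases as $r_1, r_2, \ldots$, one folds these rates into a single computable, monotonically decreasing $f$ that dominates the base-$r_i$ discrepancy past a computable modulus depending on $i$, on a set of $C_s$-measure tending to $1$.

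Given $g$, I then build $\xi$ in stages $k=1,2,\ldots$. Entering stage $k$ with a base-$s$ prefix $\sigma_k$ of length $N_k$, I first append a ``bad'' block $B_k$ of length $L_k$ consisting entirely of the digit $0$, with $L_k$ large enough that the base-$s$ discrepancy of $\sigma_k B_k$ comfortably exceeds $g(N_k+L_k)$; this is possible since $g\to 0$. Then I append a long ``recovery'' block $E_k$ of length $M_k$ lying in the $C_s$-cylinder over $\sigma_k B_k$ chosen so that, at every intermediate length $N$ during the recovery, the base-$s$ discrepancy still exceeds $g(N)$ while the discrepancy in each of $r_1,\ldots,r_k$ is below $f(N)$. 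Such an $E_k$ exists and can be found computably: the set of extensions in $C_s$ violating the $f$-bound has small, explicitly estimated $C_s$-measure, while the set achieving the desired base-$s$ discrepancy profile retains positive measure, so a brute-force search over finite extensions terminates. This is the ``module'' described at the end of the introduction, and running it with oracle $g$ yields the uniform computability of $\xi$ from $g$.

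The parameters must be balanced so that $L_k/(N_k+L_k+M_k)\to 0$, which forces the base-$s$ discrepancy to tend to zero and secures normality to base $s$, yet $M_k$ is short enough relative to $L_k+M_k$ that the base-$s$ discrepancy never falls back below $g$ before the next injection at stage $k+1$. The principal obstacle is exactly this joint control: injections must be sparse enough for base-$s$ normality but frequent enough that $D_s(N)\ge g(N)$ for all large $N$, while the recovery blocks must simultaneously work for growing finite sets of independent bases. This is feasible because $g$ is an arbitrary function decreasing to $0$, so the ratio of ``bad'' to ``recovery'' length inside each stage can be driven to zero at any prescribed rate; the real technical work, as flagged in the introduction, lies in making the Schmidt-style discrepancy estimates on $C_s$-cylinders fully effective and, crucially, applicable at every basic open neighborhood, so that each $E_k$ can be certified by a terminating computation and pasted onto any prefix produced by the preceding stages.
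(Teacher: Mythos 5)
Your "bad block / recovery block" architecture has a gap that the paper's construction is specifically designed to avoid, and the gap is exactly the tension you flag at the end as "the principal obstacle": you never actually resolve it.

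The theorem requires both that $D(\expa{s^j\xi}:0\leq j<N)\to 0$ (normality to base $s$) \emph{and} that $D(\expa{s^j\xi}:0\leq j<N)>g(N)$ for all large $N$. In your scheme the only mechanism forcing the base-$s$ discrepancy above $g$ is the injection of all-zero blocks $B_k$; but during the recovery phases $E_k$, which by your own design must overwhelmingly dominate in length (so that $L_k/(N_k+L_k+M_k)\to 0$), the base-$s$ discrepancy of a typical extension in the Cantor set will relax back to roughly $1/(2s)$ if $E_k$ stays inside $C_s$ (so $\xi$ is not even simply normal to base $s$, by Lemma~\ref{3.5}), or will drop below $g(N)$ at intermediate lengths $N$ if $E_k$ is allowed to be $s$-generic. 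Either way, you cannot simultaneously have the base-$s$ discrepancy decay to zero and keep it above $g(N)$ at \emph{every} $N$ by punctuating an essentially fixed Cantor set $C_s$ with isolated spikes. Saying "the ratio of bad to recovery length can be driven to zero" does not touch this: the issue is what happens in the middle of a long recovery block, not the proportion of bad digits overall.

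What the paper's proof does, and what is missing from your proposal, is to replace the fixed Cantor set $C_s$ by a \emph{shrinking} family of Cantor sets. At stage $m$ the construction works in base $s^{k_m}$ and omits the single digit $s^{k_m}-1$ (equivalently, a single forbidden block of length $k_m$ in base $s$), with $k_m$ increasing slowly as $g$ decreases. Omitting that one digit gives, via Lemma~\ref{3.5}, the lower bound $D(\expa{s^j\xi}:0\leq j<N)\geq 1/(2s^{k_m})$ for $N$ in the range controlled by stage $m$, which is calibrated to exceed $g(N)$; and by Lemma~\ref{3.14}, more than half the $\stilde$-adic words in the restricted alphabet have base-$s$ discrepancy less than about $1/(4s^{k_m})$, giving a matching upper bound that goes to $0$ as $k_m\to\infty$. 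Thus the lower bound is maintained continuously, not intermittently, yet still decays. Meanwhile the multiplicatively independent bases are controlled not by the qualitative statement that "bad extensions have small $C_s$-measure" but by the effective exponential-sum machinery (Lemmas~\ref{3.8} and~\ref{3.17}, resting on Schmidt's Hilfssatz~5, Lemma~\ref{3.15}) applied uniformly on $s^{k_m}$-adic cylinders, which is what makes the bound $f$ computable and independent of $g$. Without the idea of letting the exponent $k_m$ grow with the stage, the rest of your outline cannot be repaired.
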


\begin{remark}
The proof  Theorem~\ref{4} can be adapted produce other contrasts 
in behavior between multiplicatively independent bases.  We give two  examples.

(1) Let $s$ be a base.  There is a computable function $f:\N\to\Q$ monotonically decreasing to $0$ such
that for any function $g:\N\to\N$, there is an absolutely normal real number $\xi$ such that its
discrepancy for $s$ satisfies for all $n$ there is an $N>g(n)$ such that 
$D(\expa{s^j\xi}:0\leq j< N)>1/n$ and its discrepancies for bases multiplicatively independent to $s$ are eventually
bounded by $f$.  Furthermore, $\xi$ is computable from any real number $\rho$ which can computably
approximate $g$.

(2) Let $s$ and $r$ be multiplicatively independent bases. There is a computable absolutely normal
number $\xi$ such that
\[
\limsup_{N\to\infty}\frac{D(\expa{s^j\xi}:0\leq j< N)}{D(\expa{r^j\xi}:0\leq j< N)} =
\limsup_{N\to\infty}\frac{D(\expa{r^j\xi}:0\leq j< N)}{D(\expa{s^j\xi}:0\leq j< N)}
=\infty.
\]
\end{remark}
\begin{remark}\label{2.5}
  There is a computable function $f:\N\to\Q$ monotonically decreasing to $0$ such that the
  discrepancy of almost every real number  is eventually dominated by $f$.  In contrast,
  there is no computable function which dominates the discrepancy of all the computable
  absolutely normal numbers.
\end{remark}

Finally, we state the improvement of Theorem 1 of \cite{Sch61}, asserting 
simple normality in the conclusion.

\begin{definition}\label{2.6}
  Let $N$ be a positive integer. Let $\xi_1,\dots, \xi_N$ be real numbers in $[0,1]$.  Let $F$ be a
  family of subintervals.  The discrepancy of $\xi_1,\dots,\xi_N$ for $F$ is
\[
  D(F,(\xi_1,\dots,\xi_N))=\sup_{I\in F}
  \Bigl|
  \frac{{\card}\{n:\xi_n \in I\}}{N}-\measure{I}
  \Bigr|.
  \]
\end{definition}

\begin{definition}\label{2.7}
  Let $r$ be a base and let $\xi$ be a real number.  Let $F$ be the set of intervals of the form
  $[a/r,(a+1)/r)$, where $a$ is an integer $0\leq a<r$.  $\xi$ is \emph{simply normal to base $r$} if
  $\lim_{N\to\infty}D(F,(\expa{r^j\xi}:0\leq j< N))=0.$
\end{definition}

\begin{theorem}\label{5}
  Let $R$ be a set of bases closed under multiplicative dependence.  There are real numbers normal
  to every base from $R$ and not simply normal to  any base in its complement.  Furthermore, such a real number
  can be obtained computably from $R$.
\end{theorem}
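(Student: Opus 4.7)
My approach is to mirror Schmidt's construction (Theorem~1 of \cite{Sch61}) but exploit a strengthening of its Cantor-set step. The key observation is that to deny \emph{simple} normality to a base $c$ it suffices to forbid a single base-$c$ digit from ever appearing in $\xi$: if digit $a\in\{0,\dots,c-1\}$ never occurs, then its asymptotic frequency is $0\neq 1/c$ (for the exceptional base $c=2$, I would forbid a length-$2$ block such as $11$ instead of a single digit). Moreover, if $\xi$'s base-$c$ expansion omits a forbidden pattern, then every base-$c^k$ digit whose length-$k$ base-$c$ block contains that pattern is also absent, so $\xi$ fails to be simply normal to every $c^k$. Since $R^c$ is a union of multiplicative-dependence classes and each such class has a unique minimal representative $c\in M$, I would need only handle the bases in $M\cap R^c$, with the multiplicative closure of $R^c$ taking care of the rest.

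\emph{Setup.} I would enumerate $M\cap R=\{r_1,r_2,\dots\}$ and $M\cap R^c=\{c_1,c_2,\dots\}$ (with repetition or truncation as needed). For each $c_j$ let $C_{c_j}\subseteq[0,1]$ be the Cantor subset defined by the chosen forbidden pattern, endowed with the natural uniform product measure on the surviving blocks, and let $\mu_n$ be the induced product measure on $C^{(n)}=\bigcap_{j\le n}C_{c_j}$. By the Schmidt 1960 theorem cited in the introduction, for every base $r$ multiplicatively independent of $c_1,\dots,c_n$, $\mu_n$-almost every element of $C^{(n)}$ is normal to base $r$, and the same holds inside every basic open neighborhood of $C^{(n)}$.

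\emph{Construction.} I would build $\xi$ as the unique point of a nested sequence of closed basic intervals $I_0\supseteq I_1\supseteq\cdots$. At stage $n+1$, using $R$ as an oracle to update the lists, I choose $I_{n+1}\subseteq I_n$ so that every real $\xi'\in I_{n+1}$:
\begin{enumerate}[(i)]
\item has all base-$c_j$ digits determined by $I_{n+1}$ avoiding the forbidden pattern, for every $j\le n+1$;
\item satisfies $D(\expa{r_i^k\xi'}:0\le k<N_{n+1})\le\varepsilon_{n+1}$ for every $i\le n+1$, for some $N_{n+1}>N_n$ and some $\varepsilon_{n+1}$ with $\varepsilon_{n+1}\to 0$.
\end{enumerate}
Propagating~(i) through all stages forces $\xi\in\bigcap_j C_{c_j}$, and by the opening observation this denies simple normality to every base in $R^c$; condition~(ii) forces $\xi$ to be normal to every base in $M\cap R$, and hence to every base in $R$ by Maxfield's theorem.

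\emph{Main obstacle.} The hard part will be the simultaneous feasibility of~(i) and~(ii) at each stage: the new Cantor restriction in base $c_{n+1}$ further shrinks the ambient set, yet we must still find a positive $\mu_{n+1}$-mass of extensions within $I_n\cap C^{(n+1)}$ whose discrepancies in all of $r_1,\dots,r_{n+1}$ are at most $\varepsilon_{n+1}$. This is precisely the effective form of Schmidt's 1960 theorem that the introduction identifies as the paper's central quantitative input: computable bounds on discrepancy for base $r$ inside $C_s$ that hold uniformly across all basic open neighborhoods of $C_s$. Given those bounds, a finite search over candidate refinements locates a suitable $I_{n+1}$, and since the search consults $R$ only to decide base membership, $\xi$ is obtained computably from $R$, as required.
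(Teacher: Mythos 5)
Your high-level idea is the right one in spirit, but there is a genuine gap that would sink the construction: you propose to forbid a fixed digit \emph{forever} in each base $c_j\in M\cap R^c$, forcing $\xi\in\bigcap_j C_{c_j}$. When $R^c$ contains two multiplicatively independent bases $c_1,c_2$, the set $C_{c_1}\cap C_{c_2}$ is a badly behaved object: each $C_{c_j}$ is a Lebesgue-null Cantor set, the restriction in base $c_1$ does not commute in any useful way with the restriction in base $c_2$, and there is no ``induced product measure'' on the intersection. Schmidt's 1960 theorem (and its effective form, Lemma~\ref{3.17}) is about the uniform measure on a \emph{single} Cantor set $C_s$; it says nothing about intersections of Cantor sets from different bases, and indeed there is no analogous theorem there. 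So the feasibility claim in your ``main obstacle'' paragraph is not merely unproved --- the underlying measure framework does not exist. (And once $R^c$ is infinite, the intersection $\bigcap_j C_{c_j}$ might well be empty.)

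The paper's proof avoids this by never committing to a digit restriction permanently. Fix an enumeration of $S=R^c$ in which every base appears infinitely often. At each stage $m+1$ only \emph{one} base $s_{m+1}\in S$ is active, and only the \emph{newly appended} portion of the expansion avoids the digit $s_{m+1}-1$ (working in the alphabet $\{0,\dots,\tilde s-1\}$). Lemma~\ref{3.17} applies because it is a statement about a single $C_s$ and is uniform over basic open neighborhoods, which is exactly the setting at each stage. Digits of $\xi$ that were laid down in earlier stages (in other bases) are unconstrained in base $s_{m+1}$, so $\xi$ is not forced into $C_{s_{m+1}}$. Simple normality to $s\in S$ is then denied not by making the frequency of a digit equal to $0$, but by a $\limsup$ argument: Lemma~\ref{3.5} shows that after a sufficiently long run avoiding $[1-1/s,1]$, the simple discrepancy exceeds $1/(4s)$; clause~(1) holds the construction in base $s$ until this threshold is reached; since $s$ recurs infinitely often in the enumeration, the simple discrepancy for base $s$ is $\geq 1/(4s)$ at infinitely many $N$, so it cannot tend to $0$. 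This circumvents the intersection problem entirely, at the cost of trading your ``digit never occurs'' conclusion for ``simple discrepancy does not converge to $0$,'' which is all Theorem~\ref{5} requires. (Your observation that a forbidden base-$c$ digit automatically propagates to $c^k$, and your plan to handle $c=2$ via a forbidden length-$2$ block, are both sound and mirror the paper's use of $M$, Maxfield's theorem, and Lemma~\ref{3.13} for base $2$.)
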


\section{Lemmas}

\subsection{On Uniform Distribution of Sequences}

\begin{lemma}\label{3.1}
  Let  $\epsilon$ be a real number strictly between $0$ and $1$.  Let $F_\epsilon$ be the
  family of semi-open intervals $B_a=[a/\ceil{3/\epsilon}, (a+1)/\ceil{3/\epsilon})$, where $a$ is
  an integer $0\leq a<\ceil{3/\epsilon}$. For any sequence $\vec{\xi}$ and any $N$, if
  $D(F_\epsilon,\vec{\xi})<(\epsilon/3)^2$ then $D(\vec{\xi})<\epsilon$.
\end{lemma}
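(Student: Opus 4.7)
The plan is to sandwich any subinterval $[u,v) \subseteq [0,1]$ between inner and outer unions of the basic intervals $B_a$, and to reduce the resulting bound by a factor of two via the observation that counts over a union $J$ and its complement in $[0,1)$ are linked. Set $K := \ceil{3/\epsilon}$, so each $B_a$ has length $1/K \le \epsilon/3$.

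First, for any $0 \le u < v \le 1$, I would let $J^-$ be the union of those $B_a$ with $B_a \subseteq [u,v)$ and $J^+$ the union of those $B_a$ meeting $[u,v)$. Then $J^- \subseteq [u,v) \subseteq J^+$, both are disjoint unions of consecutive basic intervals, and $J^+ \setminus J^-$ consists of at most two basic intervals (those straddling $u$ and $v$), so $\measure{J^+} - \measure{J^-} \le 2/K \le 2\epsilon/3$.

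Next, I would bound the count-versus-measure error for any union $J$ of $k$ of the $B_a$ by $\min(k, K-k)(\epsilon/3)^2$. The naive bound of $k(\epsilon/3)^2$ follows by summing the per-interval errors, each of absolute value at most $(\epsilon/3)^2$. The complementary bound of $(K-k)(\epsilon/3)^2$ comes by applying the same estimate to the union of the remaining basic intervals and using that counts and measures over $J$ and its complement in $[0,1)$ sum to $N$ and $1$ respectively. Taking the minimum gives
\[
  \Bigl| \frac{\card\{n : \xi_n \in J\}}{N} - \measure{J} \Bigr| \le \min(k, K-k)(\epsilon/3)^2 \le (K/2)(\epsilon/3)^2.
\]

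Combining the two steps with $c := \card\{n : \xi_n \in [u,v)\}/N$, the sandwich $J^- \subseteq [u,v) \subseteq J^+$ yields $|c - (v-u)| \le 2/K + (K/2)(\epsilon/3)^2$. Substituting $K < 3/\epsilon + 1$ bounds this by $2\epsilon/3 + \epsilon/6 + \epsilon^2/18 = 5\epsilon/6 + \epsilon^2/18 < \epsilon$, the last inequality using $\epsilon < 1$. Taking the supremum over $u,v$ then yields $D(\vec{\xi}) < \epsilon$. The key point, and the one the sketch hinges on, is the $K/2$ factor in place of $K$: the naive estimate $K(\epsilon/3)^2$ combined with the boundary error $2/K$ would yield only $\epsilon + \epsilon^2/9$, just missing the target, and halving this factor via the complement identity exactly clears the margin.
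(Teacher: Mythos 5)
Your proof is correct, and it takes a genuinely different route from the one in the paper. The paper's argument bounds $\frac1N\card\{n:\xi_n\in I\}$ directly: every $\xi_n\in I$ lies in one of the $B_a$ meeting $I$, the hypothesis caps the count in each $B_a$ by $(1/K+\epsilon^2/9)N$, and the number of $B_a$ meeting $I$ is roughly $K\measure{I}$; multiplying these gives $\approx\measure{I}+\epsilon/3+\epsilon/3$, which is under $\epsilon$. Your argument instead sandwiches $[u,v)$ between inner and outer unions $J^-\subseteq[u,v)\subseteq J^+$ and then compares $c^{\pm}$ to $\measure{J^{\pm}}$ (rather than to $\measure{I}$); the novelty is the complement trick, bounding the count--measure deviation over a union of $k$ cells by $\min(k,K-k)(\epsilon/3)^2\le (K/2)(\epsilon/3)^2$. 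That halving is essential in your route: without it, the $2/K$ boundary error plus a naive $K(\epsilon/3)^2$ over the full union lands at about $\epsilon+\epsilon^2/9$, just over the target. The paper avoids needing the halving because it only works with the outer cover on one side at a time and compares straight to $\measure{I}$, so it pays roughly one boundary cell of error rather than two; in exchange it relies on the measure-dependent count of cells meeting $I$. Both are elementary, close to the margin, and land under $\epsilon$, so your proof is a valid alternative.
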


\begin{proof}
  Let $\vec{\xi}$ be a sequence of real numbers of length $N$ such that $D(F_\epsilon,\vec{\xi})$ is
  less than $(\epsilon/3)^2$.  Let $I$ be any semi-open subinterval of $[0,1]$.  Denote
  $\ceil{3/\epsilon}$ by $n$.
  The number of $B_a$ with nonempty intersection with $I$ 
  is less than or equal to $\ceil{n\measure{I}}$.   For each $B_a\in F_\epsilon$, $\card\{\xi_n:\xi_n\in
  B_a\}$ is less than or equal to $(1/n+\epsilon^2/9)N$.  Thus, by the definition of $n$,
  \begin{align*}
    \frac1N\card\{\xi_n:\xi_n\in I\}&\leq \frac1N\ceil{n\measure{I}} (1/n+\epsilon^2/9)N \leq \measure{I}+\epsilon.
%     &\leq (n\measure{I}+1) (1/n+\epsilon^2/9)\\
%     &\leq \measure{I} + 1/n + n\measure{I}\epsilon^2/9 +\epsilon^2/9\\
%     &\leq \measure{I} + 1/\ceil{3/\epsilon} + \ceil{3/\epsilon}\epsilon^2/9 +\epsilon^2/9\\
%     &\leq \measure{I}+\epsilon/3 +\epsilon/3 +2\epsilon^2/9\\
   \end{align*}
   Similarly, $\frac1N\card\{\xi_n:\xi_n\in I\}\geq \measure{I}-\epsilon.$
\end{proof}

\begin{remark}\label{3.2}
In Lemma~\ref{3.1},  $F_\epsilon$ can be replaced by any partition of $[0,1]$ 
into subintervals of equal length, each of length at most $\epsilon/3$.
\end{remark}

We record the next three observations without proof.

\begin{lemma}\label{3.3}
  Suppose that $\epsilon$ is a positive real, $\vec{\xi}$ is a sequence of length $N$ and that
  $D(\vec{\xi})<\epsilon$.  For any sequence $\vec{\nu}$ of length $n$ with $n<\epsilon N$, for all
  $k\leq n$, $D(\nu_1\dots, \nu_k, \xi_1,\dots, \xi_N)<2\epsilon$ and $D(\xi_1,\dots,
  \xi_N,\nu_1\dots, \nu_k)<2\epsilon$.
\end{lemma}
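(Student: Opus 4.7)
The plan is to observe that the empirical measure of the concatenated sequence is a convex combination of the empirical measure of $\vec{\xi}$ and the empirical measure of the initial segment $(\nu_1,\ldots,\nu_k)$, and that the assumption $n<\epsilon N$ forces the second of these weights to be small. Since the value of the discrepancy depends only on the multiset of entries, not on the ordering, the same estimate will handle both the prepended and the appended version simultaneously.

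Concretely, I would fix an arbitrary subinterval $I \subseteq [0,1]$ and write $\mu_\xi$ and $\mu_\nu$ for the empirical distributions of $\vec{\xi}$ and of $(\nu_1,\ldots,\nu_k)$ respectively. The empirical distribution $\mu$ of the concatenation (in either order) satisfies
\[
\mu(I) \;=\; \frac{N}{N+k}\,\mu_\xi(I) + \frac{k}{N+k}\,\mu_\nu(I),
\]
so that
\[
\bigl|\mu(I)-\measure{I}\bigr| \;\leq\; \frac{N}{N+k}\,\bigl|\mu_\xi(I)-\measure{I}\bigr| + \frac{k}{N+k}\,\bigl|\mu_\nu(I)-\measure{I}\bigr|.
\]
I would then plug in the two trivial bounds: the first absolute value is strictly less than $\epsilon$ by the hypothesis $D(\vec{\xi})<\epsilon$, and the second is at most $1$ since both quantities lie in $[0,1]$. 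The hypothesis $k\leq n<\epsilon N$ gives $k/(N+k) < \epsilon/(1+\epsilon) < \epsilon$, while $N/(N+k)\leq 1$. Combining these yields $|\mu(I)-\measure{I}| < \epsilon + \epsilon = 2\epsilon$. Taking the supremum over $I$ gives $D(\nu_1,\ldots,\nu_k,\xi_1,\ldots,\xi_N)<2\epsilon$, and the same bound for the other ordering follows from the fact that discrepancy is invariant under permutation of the sequence.

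There is no real obstacle here; the whole argument is a one-line convex-combination estimate. The only point that requires a moment of attention is verifying that the bound $k<\epsilon N$ translates into $k/(N+k)<\epsilon$ rather than just $k/(N+k)<\epsilon/(1+\epsilon)$; either suffices, but the latter is what is actually available and gives the factor $2$ in the conclusion tightly.
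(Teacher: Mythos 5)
The paper records Lemma~\ref{3.3} explicitly without proof (``We record the next three observations without proof''), so there is no argument in the text to compare against. Your proof is correct and complete: the convex-combination identity for the empirical measure is exact, the weight $k/(N+k)$ is bounded by $\epsilon/(1+\epsilon)<\epsilon$ exactly as you say, and the observation that the discrepancy in Definition~\ref{2.1} depends only on the multiset of entries legitimately collapses the prepended and appended cases into one. The only stylistic point worth noting is that to get the strict inequality in the conclusion you should take the supremum of the bound $\frac{N}{N+k}D(\vec\xi)+\frac{k}{N+k}$ (a fixed number strictly below $2\epsilon$) rather than the pointwise-strict bound $|\mu(I)-\measure{I}|<2\epsilon$, since a supremum of strict inequalities is not itself strict; your argument already contains everything needed to phrase it that way.
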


%\begin{proof}
%  
% By definition, $D_{N_\xi+k}(\vec{\xi}*\vec{\nu})$ is equal to 
%   \[
% \sup_{0\leq v<w\leq 1}
% \left|\frac{{\card}\{\xi_n\in [v,w) : 1\leq n\leq N_\xi  \}+
% \card\{\nu_m\in    [v,w):1\leq m\leq N_\nu\}}{N_\xi+N_\nu}-(w-v)\right|.
% \]
% Fix $w$ and $v$.
% \begin{align*}
%   \frac{{\card}\{\xi_n\in [v,w) :1\leq n\leq N_\xi  \}+\card\{\nu_m\in  [v,w) :1\leq m\leq N_\nu \}}{N_\xi+N_\nu}
%     &\leq\frac{((w-v)+\epsilon)N_\xi}{N_\xi+N_\nu}+\frac{N_\nu}{N_\xi+N_\nu}\\
%     &\leq ((w-v)+\epsilon) + \epsilon.
%   \end{align*}
% Similarly,
% \begin{align*}
%   \frac{{\card}\{\xi_n\in [v,w) : 1\leq n\leq N_\xi  \}+\card\{\nu_m\in  [v,w) : 1\leq m\leq N_\nu\}}{N_\xi+N_\nu}
%     &\geq\frac{((w-v)-\epsilon)N_\xi}{N_\xi+N_\nu}-\frac{N_\nu}{N_\xi+N_\nu}\\
%     &\geq ((w-v)-\epsilon) - \epsilon.
%   \end{align*}
%   Hence, $D_{N_\xi+N_\nu}(\vec{\xi}*\vec{\nu})<2\epsilon$.  The argument for
%   $D_{N_\xi+N_\nu}(\vec{\nu}*\vec{\xi})<2\epsilon$ is similar.
%\end{proof}

\begin{lemma}\label{3.4}
  Let $\vec{\xi}$ be a sequence of real numbers, $\epsilon$ a positive real and $(b_m:0\leq
  m<\infty)$ an increasing sequence of positive integers.  Suppose that there is an $m_0$ such that
  for all $m>m_0$, $b_{m+1}-b_m\leq \epsilon b_m$ and $D(\xi_j:b_m< j\leq
  b_{m+1})<\epsilon$.  Then $\lim_{N\to\infty}D(\vec{\xi})\leq 2\epsilon.$
\end{lemma}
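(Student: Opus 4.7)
The plan is to exploit the elementary convexity (or subadditivity) bound for discrepancy under concatenation: if $\xi_1,\ldots,\xi_N$ is split into consecutive blocks $B_1,\ldots,B_k$ of lengths $N_1,\ldots,N_k$, then for any subinterval $I$ of $[0,1]$ both $\card\{n\le N:\xi_n\in I\}/N$ and $|I|$ are the convex combinations, with weights $N_i/N$, of the corresponding block-level quantities. Subtracting and applying the triangle inequality gives
$$
D(\xi_1,\ldots,\xi_N)\ \leq\ \sum_{i=1}^{k}\frac{N_i}{N}\,D(B_i).
$$

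Given a large $N$, I would pick the unique $m$ with $b_m<N\le b_{m+1}$, so that for all sufficiently large $N$ we have $m>m_0$. I would then partition $\xi_1,\ldots,\xi_N$ into three groups: an initial segment $\xi_1,\ldots,\xi_{b_{m_0+1}}$, the ``good'' middle blocks $(\xi_{b_k+1},\ldots,\xi_{b_{k+1}})$ for $m_0<k<m$, and a partial terminal piece $(\xi_{b_m+1},\ldots,\xi_N)$. Feeding this partition into the concatenation bound, using the trivial estimate $D\le 1$ on the two extreme pieces and the hypothesis $D<\epsilon$ on the middle blocks, yields
$$
D(\xi_1,\ldots,\xi_N)\ \leq\ \frac{b_{m_0+1}}{N}\ +\ \epsilon\cdot\frac{b_m-b_{m_0+1}}{N}\ +\ \frac{N-b_m}{N}.
$$

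The middle summand is bounded by $\epsilon$ directly. The terminal length satisfies $N-b_m\le b_{m+1}-b_m\le\epsilon\,b_m\le\epsilon\,N$ by the slow-growth hypothesis combined with $b_m\le N$, so the last summand is also at most $\epsilon$. Since $b_{m_0+1}$ is fixed while $N\to\infty$ (and hence $m\to\infty$), the first summand vanishes, giving $\limsup_{N\to\infty}D(\xi_1,\ldots,\xi_N)\le 2\epsilon$ as required.

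I do not expect any serious obstacle; the only mildly delicate point is handling the partial terminal block, whose length is automatically controlled by $\epsilon\,N$ precisely because of the slow-growth hypothesis $b_{m+1}-b_m\le\epsilon\,b_m$. The heart of the argument is just the convexity of discrepancy under concatenation, combined with the observation that each piece contributing to the sum is either long with low discrepancy or short relative to the current length $N$.
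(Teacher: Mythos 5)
Your proof is correct. The paper actually records Lemma~\ref{3.4} as one of ``the next three observations without proof,'' so there is no official argument to compare against; your argument is the natural one to supply. The key ingredient, the subadditivity of discrepancy under concatenation into consecutive blocks weighted by relative block length, is exactly right, and the bookkeeping is sound: the middle term is at most $\epsilon$ because the good blocks together occupy at most $N$ indices, the terminal piece contributes at most $\epsilon$ via $N-b_m\le b_{m+1}-b_m\le\epsilon b_m\le\epsilon N$ (valid once $m>m_0$, which holds for large $N$), and the fixed prefix contributes $b_{m_0+1}/N\to 0$. This yields $\limsup_{N\to\infty}D(\xi_1,\dots,\xi_N)\le 2\epsilon$, which is the intended reading of the lemma's conclusion.
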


 % \begin{proof}
 %   Consider a subinterval $I$ of $[0,1]$ and an $N$ greater than $b_{m_0}$.  Let $b_M$ be the largest
 %   element of $(b_m:0\leq m<\infty)$ less or equal to $N$.  Then $\card\{\xi_j\in I:0\leq j< N\}$ is
 %   \begin{align*}
 %     \card\{\xi_j\in I:0\leq j< b_{m_0}\}+
 %     &\card\bigcup_{m_0\leq i<M}\{\xi_j\in I:b_i< j\leq b_{i+1}\}+
 %     \card\{\xi_j\in I: b_{M}<j\leq N\}\\
 %     &\leq b_{m_0}+\sum_{i=m_0}^M(\measure{I}+\epsilon)(b_{i+1}-b_i) +\epsilon b_{M}\\
 %     &\leq b_{m_0}+(\measure{I}+\epsilon) (b_{M}-b_{m_0}) +\epsilon b_{M}\\
 %     &\leq N(b_{m_0}/N+\measure{I}+2\epsilon).
 %   \end{align*}
 %   Since $I$ was arbitrary, the lemma follows.
 % \end{proof}

\begin{lemma}\label{3.5}    
  Let $m$ be a positive integer and $I$ a semi-open interval.  Suppose $\vec{\xi}$ is a sequence
  of real numbers of length $N$ such that $N\geq\ceil{2m/\measure{I}}$ and  for all $j$ with 
$m\leq j\leq N$, $\xi_j\not\in  I$.  Then, $D(I,\vec{\xi})\geq\mu(I)/2$.
\end{lemma}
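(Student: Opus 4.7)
The plan is to prove the lemma by direct counting. The key observation is that the hypothesis ``for all $j$ with $m \leq j \leq N$, $\xi_j \notin I$'' confines every index $j$ for which $\xi_j$ lands in $I$ to the range $1 \leq j < m$, so the cardinality $\card\{n : 1 \leq n \leq N,\ \xi_n \in I\}$ is at most $m - 1$.

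Given this bound on the count, the empirical frequency satisfies $\card\{n : \xi_n \in I\}/N \leq (m-1)/N < m/N$. The hypothesis $N \geq \ceil{2m/\measure{I}}$ yields $N \geq 2m/\measure{I}$, equivalently $m/N \leq \measure{I}/2$. Combining these two inequalities gives $\card\{n : \xi_n \in I\}/N \leq \measure{I}/2$, so
\[
\measure{I} - \frac{\card\{n : \xi_n \in I\}}{N} \geq \measure{I} - \frac{\measure{I}}{2} = \frac{\measure{I}}{2}.
\]
Since this nonnegative quantity equals its own absolute value, $D(I,\vec{\xi}) \geq \measure{I}/2$, as required.

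There is no real obstacle here; the lemma is essentially a bookkeeping statement. The only mild subtlety is the mismatch between a single interval $I$ and the family-based definition of discrepancy in Definition~\ref{2.6}, which I would handle by reading $D(I,\vec{\xi})$ as $D(\{I\},\vec{\xi})$, i.e., the single supremand $\bigl|\card\{n : \xi_n \in I\}/N - \measure{I}\bigr|$. This reading is consistent with how the lemma is intended to be applied later (to certify that discrepancy is kept large by stockpiling indices that avoid a specific interval).
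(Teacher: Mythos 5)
Your proof is correct. The paper records Lemma~\ref{3.5} among ``the next three observations without proof,'' so there is no in-paper argument to compare against, but the direct counting you give — bounding $\card\{n:\xi_n\in I\}\leq m-1$, using $N\geq 2m/\measure{I}$ to get $m/N\leq\measure{I}/2$, and noting the resulting deviation $\measure{I}-\card\{n:\xi_n\in I\}/N\geq\measure{I}/2$ is nonnegative — is exactly the intended one. Your reading of $D(I,\vec{\xi})$ as $D(\{I\},\vec{\xi})$ per Definition~\ref{2.6} is also the correct interpretation and matches how the lemma is invoked in the proofs of Theorems~\ref{2} and~\ref{5}.
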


\begin{notation}
  We let $e(x)$ denote $e^{2\pi i x}$.
\end{notation}

\begin{externaltheorem}[Weyl's Criterion \protect{\cite[see][]{Bug12}}]
A sequence $(\xi_n:n\geq 1)$ of real numbers is uniformly distributed modulo one if and only if for
every non-zero $t,$ $\displaystyle{\lim_{N\to\infty}\frac1N\sum_{j=1}^N e(t\xi_n)=0.}$
\end{externaltheorem}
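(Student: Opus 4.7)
The plan is the standard trigonometric-polynomial approximation argument, with $t$ interpreted as ranging over the nonzero integers (so that $x\mapsto e(tx)$ is $1$-periodic).

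For the forward direction, assume $(\xi_n)$ is uniformly distributed modulo one, i.e., $D(\expa{\xi_1},\dots,\expa{\xi_N})\to 0$. Unwinding Definition~\ref{2.1}, $\frac1N\card\{n\le N:\expa{\xi_n}\in[u,v)\}\to v-u$ uniformly in $0\le u<v\le 1$. Approximating any Riemann integrable $f:[0,1]\to\mathbb{C}$ from above and below by step functions supported on subintervals extends this to $\frac1N\sum_{n=1}^N f(\expa{\xi_n})\to\int_0^1 f(x)\,dx$. Specializing to $f(x)=e(tx)$ for a nonzero integer $t$ gives $\int_0^1 e(tx)\,dx=0$, which is the Weyl sum condition.

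For the reverse direction, assume $\frac1N\sum_{n=1}^N e(t\xi_n)\to 0$ for every nonzero integer $t$. By linearity (and $1$-periodicity of $e(t\,\cdot\,)$), for every trigonometric polynomial $P(x)=\sum_{|t|\le T}c_t\,e(tx)$ one has $\frac1N\sum_{n=1}^N P(\expa{\xi_n})\to c_0=\int_0^1 P(x)\,dx$. Fix $\epsilon>0$ and a subinterval $I=[u,v)\subseteq[0,1]$. Sandwich $\chi_I$ between continuous $1$-periodic functions $g^-\le\chi_I\le g^+$ (piecewise-linear bumps of width $\epsilon$ at each endpoint suffice) with $\int_0^1(g^+-g^-)\,dx<\epsilon$. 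Weierstrass's theorem on the circle (equivalently, the Fej\'er kernel) yields trigonometric polynomials $P^\pm$ with $\|g^\pm-P^\pm\|_\infty<\epsilon$. Averaging $P^-$ and $P^+$ along $(\expa{\xi_n})$ and taking $N\to\infty$ sandwiches the counting fraction:
\[
(v-u)-O(\epsilon)\le \liminf_N\frac1N\card\{n\le N:\expa{\xi_n}\in I\}\le\limsup_N\frac1N\card\{n\le N:\expa{\xi_n}\in I\}\le(v-u)+O(\epsilon).
\]

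The main obstacle is passing from this pointwise sandwich for a fixed $I$ to the supremum appearing in $D$: the polynomials $P^\pm$ depend on the endpoints of $I$. The fix is to choose once a finite $\epsilon$-net of endpoints $\{j\epsilon:0\le j\le\lceil 1/\epsilon\rceil\}$, produce sandwich polynomials for each of the finitely many intervals with endpoints in the net (incurring a uniform $N_0=N_0(\epsilon)$), and for an arbitrary $I$ round its endpoints to the nearest net points, absorbing the resulting $O(\epsilon)$ change in the counting fraction and in the length into the error. This gives $\limsup_N D(\expa{\xi_1},\dots,\expa{\xi_N})=O(\epsilon)$; since $\epsilon$ is arbitrary, uniform distribution follows.
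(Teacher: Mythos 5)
The paper does not prove this statement at all: it is quoted as an external classical theorem (Weyl's Criterion, cited to Bugeaud), so there is no internal proof to compare against. Your argument is the standard textbook proof and it is correct: the forward direction via approximating the (Riemann integrable, here continuous) functions $x\mapsto e(tx)$ by step functions once the counting condition is known for intervals, and the converse via sandwiching $\chi_{[u,v)}$ between continuous periodic functions and invoking Weierstrass/Fej\'er to reduce to trigonometric polynomials, whose averages converge to their mean by linearity and the hypothesis on the Weyl sums with nonzero integer $t$ (the correct reading of ``non-zero $t$'' in the statement, as you note). You also correctly flag and close the only genuine subtlety, namely passing from convergence for each fixed interval to the supremum in the discrepancy $D$, which your finite $\epsilon$-net of endpoints handles in the usual way; alternatively one can simply take the interval-wise counting condition as the definition of uniform distribution modulo one and cite the elementary equivalence with $D\to 0$. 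No gaps.
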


\begin{externaltheorem}[\protect{\textbf{LeVeque's Inequality} \cite[see][Theorem~2.4]{KuiNie74}}]\label{3.7}
Let $\vec{\xi}=(\xi_1,\dots,\xi_N)$  be  a finite sequence. Then,
$\displaystyle{
  D(\vec{\xi})\leq \Bigl(\frac6{\pi^2}\;\sum_{h=1}^\infty \frac1{h^2} \Bigl|\frac1N\; \sum_{j=1}^N
  e(h\xi_j)\Bigr|^2\Bigr)^{\frac13}.}$
\end{externaltheorem}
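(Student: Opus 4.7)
The plan is a Fourier-analytic argument centered on the deviation function
\[
g(\alpha) \;=\; \frac{1}{N}\card\{j : 1\leq j\leq N,\ \xi_j < \alpha\} - \alpha,\qquad \alpha\in[0,1].
\]
Observe that $g(0)=g(1)=0$ (after normalizing $\xi_j\in[0,1)$) and that for any $0\leq u<v\leq 1$ the quantity $\frac{\card\{n:u\leq\xi_n<v\}}{N}-(v-u)$ equals $g(v)-g(u)$; hence $D(\vec\xi) = \sup_{\alpha} g(\alpha) - \inf_{\alpha} g(\alpha)$. The idea is to compute $\|g\|_2^2$ exactly via Parseval and then bound the oscillation of $g$ above by a power of $\|g\|_2$.

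First I would compute the Fourier coefficients of $g$ on $[0,1]$. Using $g(0)=g(1)=0$, integration by parts applied to $\hat g(h)=\int_0^1 g(\alpha)\,e^{-2\pi i h\alpha}\,d\alpha$, together with the distributional identity $g'(\alpha)=\frac{1}{N}\sum_{j}\delta_{\xi_j}(\alpha)-1$, gives for each $h\neq 0$
\[
\hat g(h) \;=\; \frac{1}{2\pi i h}\cdot\frac{1}{N}\sum_{j=1}^N e(-h\xi_j),
\]
and Parseval's identity together with $|\hat g(-h)|=|\hat g(h)|$ then yields
\[
\|g\|_2^2 \;=\; \sum_{h\neq 0}|\hat g(h)|^2 \;=\; \frac{1}{2\pi^2}\sum_{h=1}^\infty \frac{1}{h^2}\Bigl|\frac{1}{N}\sum_{j=1}^N e(h\xi_j)\Bigr|^2.
\]

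Next I would exploit the piecewise structure of $g$: between consecutive $\xi_j$'s, $g$ has slope $-1$, and at each $\xi_j$ it jumps upward by $1/N$. Consequently if $M=\sup g$ is attained at $\alpha_0$ then $g(\alpha_0+t)\geq M-t$ for $t\in[0,M]$, and the inclusion $\alpha_0+M\leq 1$ follows from $g(1)=0$; symmetrically, if $-M'=\inf g$ is attained at $\alpha_1$ then $g(\alpha_1-t)\leq -M'+t$ for $t\in[0,M']$, with $\alpha_1-M'\geq 0$ from $g(0)=0$. The intervals $[\alpha_0,\alpha_0+M]$ (on which $g\geq 0$) and $[\alpha_1-M',\alpha_1]$ (on which $g\leq 0$) have disjoint interiors, so
\[
\|g\|_2^2 \;\geq\; \int_0^M (M-t)^2\,dt + \int_0^{M'}(M'-t)^2\,dt \;=\; \frac{M^3+(M')^3}{3}.
\]
Combining this with the elementary inequality $(a+b)^3\leq 4(a^3+b^3)$ for $a,b\geq 0$ (which reduces to $(a-b)^2(a+b)\geq 0$) and with $D(\vec\xi)=M+M'$ gives
\[
D(\vec\xi)^3 \;\leq\; 4\bigl(M^3+(M')^3\bigr)\;\leq\;12\|g\|_2^2 \;=\; \frac{6}{\pi^2}\sum_{h=1}^\infty \frac{1}{h^2}\Bigl|\frac{1}{N}\sum_{j=1}^N e(h\xi_j)\Bigr|^2,
\]
as required. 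The Fourier-Parseval computation is routine; the substantive obstacle is the geometric step, which is not a general functional inequality but relies specifically on the one-sided Lipschitz/upward-jump structure of the empirical deviation $g$ together with the verification that the two sign-definite intervals fit inside $[0,1]$ and meet only at a point.
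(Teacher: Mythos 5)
The paper itself gives no proof of LeVeque's inequality; it is recorded as an external theorem with a citation to Kuipers and Niederreiter, so your attempt must be judged on its own. The Fourier/Parseval strategy together with the geometric lower bound on the $L^2$ norm is indeed the classical route, and the pieces you assemble are individually correct: the formula $\hat g(h) = \tfrac{1}{2\pi i h}\cdot\tfrac1N\sum_j e(-h\xi_j)$ for $h\neq 0$, the estimate $\|g\|_2^2 \geq \tfrac13(M^3 + M'^3)$, and the elementary inequality $(M+M')^3\leq 4(M^3+M'^3)$. The error is in the Parseval step: you assert $\|g\|_2^2 = \sum_{h\neq 0}|\hat g(h)|^2$, silently dropping the zero-frequency term. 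But $\hat g(0) = \int_0^1 g(\alpha)\,d\alpha = \tfrac1N\sum_j(1-\xi_j) - \tfrac12$ is generically nonzero (for instance $N=1$, $\xi_1 = 1/4$ gives $\hat g(0) = 1/4$). Consequently the chain you write proves only $D(\vec{\xi})^3 \leq 12\|g\|_2^2$, which exceeds the stated right-hand side by $12|\hat g(0)|^2$; that is a strictly weaker inequality.

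The standard repair is to subtract the mean and work with $\tilde g = g - \hat g(0)$, whose $L^2$ norm is exactly $\tfrac1{2\pi^2}\sum_{h\geq 1}h^{-2}\bigl|\tfrac1N\sum_j e(h\xi_j)\bigr|^2$. But this collides with precisely the point you single out as the substantive obstacle, because $\tilde g(0)=\tilde g(1)=-\hat g(0)$ is no longer $0$, so the containments $\alpha_0 + M\leq 1$ and $\alpha_1 - M'\geq 0$ cannot be read off the endpoint values. The fix is to regard $\tilde g$ as a $1$-periodic function on the circle: the slope-$(-1)$-with-upward-jumps structure persists, the arcs $[\alpha_0,\alpha_0+\tilde M]$ and $[\alpha_1-\tilde M',\alpha_1]$ taken mod $1$ have disjoint interiors because $\tilde g>0$ on the first and $\tilde g<0$ on the second, and they fit on the circle because $\tilde M+\tilde M' = M+M' = D(\vec{\xi})\leq 1$ (on any arc, $\tilde g$ drops by at most the arc length). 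With that modification $\|\tilde g\|_2^2 \geq \tfrac13(\tilde M^3 + \tilde M'^3)\geq \tfrac1{12}D(\vec{\xi})^3$ and the stated constant $6/\pi^2$ comes out correctly. So the plan is right and the missing idea (periodize rather than invoke boundary values) is close by, but as written the proposal does not establish the inequality.
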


\begin{lemma}\label{3.8}
  For any positive real $\epsilon$ there is a finite set $T$ of integers  and a positive real~$\delta$ 
  such that for any $\vec{\xi}=(\xi_1,\dots,\xi_N)$, if for all $t\in T$,
  $\displaystyle{\frac1{N^2}\; \Bigl|\sum_{j=1}^N e(t\xi_j)\Bigr|^2<\delta}$ then
  $D(\vec{\xi})<\epsilon.$ Furthermore, such $T$ and $\delta$ can be computed from $\epsilon$.
\end{lemma}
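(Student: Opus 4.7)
The plan is to apply LeVeque's inequality (Theorem~\ref{3.7}) to translate the bound on exponential sums into a bound on discrepancy. Concretely, LeVeque gives
\[
D(\vec{\xi})^3 \;\leq\; \frac{6}{\pi^2}\sum_{h=1}^{\infty}\frac{1}{h^2}\Bigl|\frac{1}{N}\sum_{j=1}^{N} e(h\xi_j)\Bigr|^2,
\]
so it suffices to force the right-hand side below $\epsilon^3$. The natural move is to cut the $h$-series at some truncation level $H$, control the head by the hypothesis on $T$, and control the tail by the trivial bound $|\frac{1}{N}\sum_j e(h\xi_j)|^2\leq 1$ together with the convergence of $\sum 1/h^2$.

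First I would pick $H$ large enough that $\sum_{h>H}1/h^2$ is less than $\pi^2\epsilon^3/12$; since this tail is bounded by $1/H$, taking $H=\lceil 12/(\pi^2\epsilon^3)\rceil$ (or any explicit bound computed from $\epsilon$) suffices. Then I would set $T=\{1,2,\dots,H\}$ and choose $\delta$ so that $\delta\cdot\sum_{h=1}^{H}1/h^2 < \pi^2\epsilon^3/12$; for instance $\delta=\epsilon^3/2$ works, because $\sum_{h=1}^{H}1/h^2\leq \pi^2/6$. Under the hypothesis, the head of the LeVeque sum is then at most $\delta\cdot \pi^2/6 < \pi^2\epsilon^3/12$, the tail is at most $1/H < \pi^2\epsilon^3/12$, and multiplying their sum by $6/\pi^2$ yields a quantity strictly less than $\epsilon^3$, giving $D(\vec{\xi})<\epsilon$.

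The computability claim is immediate from this construction: given $\epsilon$ (as a positive rational, say), both $H$ and $\delta$ are expressed by elementary arithmetic operations on $\epsilon$, and $T$ is just the initial segment $\{1,\dots,H\}$. There is no real obstacle in this argument; the only subtlety is the routine check that the tail bound $\sum_{h>H}1/h^2\leq 1/H$ combined with the per-term bound $|\frac{1}{N}\sum_j e(h\xi_j)|^2\leq 1$ is good enough to overpower the $1/h^2$ weights, which is exactly why the choice $H\sim 1/\epsilon^3$ works.
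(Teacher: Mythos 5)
Your proof is correct and follows essentially the same approach as the paper: apply LeVeque's inequality, truncate the $h$-series at $H \approx 12/(\pi^2\epsilon^3)$, bound the tail by the trivial estimate $|\tfrac1N\sum_j e(h\xi_j)|\le 1$ together with $\sum_{h>H}1/h^2 < 1/H$, and set $T=\{1,\dots,H\}$. The only (harmless) difference is that you bound the head by $\sum_{h\le H}1/h^2 < \pi^2/6$, giving the cleaner $\delta=\epsilon^3/2$, whereas the paper uses the cruder bound $\sum_{h\le H}1/h^2 \le H$ and correspondingly scales $\delta$ by $1/H$.
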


\begin{proof}
By LeVeque's Inequality, $\displaystyle{
  D(\vec{\xi})\leq \Bigl(\frac6{\pi^2}\;\sum_{h=1}^\infty \frac1{h^2} \Bigl|\frac1N\; \sum_{j=1}^N
  e(h\xi_j)\Bigr|^2\Bigr)^{\frac13}.}$
  Note that\linebreak
  $  \Bigl|\frac1N\; \sum_{j=1}^N
  e(h\xi_j)\Bigr|^2\leq 1.$
Hence, for each $h$,
\begin{align*}
\sum_{h=m+1}^\infty \frac1{h^2} \Bigl|\frac1N\; \sum_{j=1}^N
e(h\xi_j)\Bigr|^2&\leq \sum_{h=m+1}^\infty \frac1{h^2} 
\leq \int_{m+1}^\infty x^{-2} dx
\leq \frac1{m+1}.
\end{align*}
Assume $\displaystyle{\frac1{N^2}\; \Bigl|\sum_{j=1}^N e(t\xi_j)\Bigr|^2<\delta}$ for all
positive integers $t$ less than or equal to $m$. Then,
\begin{align*}
\sum_{h=1}^m \frac1{h^2} \Bigl|\frac1N\; \sum_{j=1}^N
e(h\xi_j)\Bigr|^2
+\sum_{h=m+1}^\infty \frac1{h^2} \Bigl|\frac1N\; \sum_{j=1}^N
e(h\xi_j)\Bigr|^2\leq
\sum_{h=1}^m \frac{1}{h^2} \delta
+\frac1{m+1} %\\
%&\leq\delta \sum_{h=1}^m \frac{1}{h^2}+\frac1{m+1}\\
%&
\leq\delta m+\frac1{m+1}.
\end{align*}
To ensure $D(\vec{\xi})<\epsilon$ it is sufficient that 
$(6/\pi^2)\bigl(\delta m+(1/m+1)\bigr)^{\frac13}<\epsilon$.
%$\displaystyle{\left(\frac6{\pi^2}\bigl(\delta m+\frac1{m+1}\bigr)\right)^{\frac13}<\epsilon}$.
This is obtained by setting
$\delta m< (1/2) ( \epsilon^3\pi^2/ 6)$ and 
$1/(m+1)< (1/2) ( \epsilon^3\pi^2/ 6)$.
Let  $m = \ceil{12/(\epsilon^3\pi^2)}$,
$T=\{ 1, 2, \ldots, m\}$  and 
$\delta= (\epsilon^3\pi^2) / (24 m)$.
\end{proof}

\subsection{On Normal Numbers}

\begin{notation}
  We use $\base{b}{r}$ to denote $\ceil{b/\log r}$, where $\log$ refers to natural logarithm.  We
  say that a rational number $\eta\in[0,1]$ is $s$-adic when $\eta=\sum_{j=1}^ad_js^{-j}$ for digits
  $d_j$ in $\{0,\dots,s-1\}$.  In this case, we say that $\eta$ has precision $a$.  We use
  $\digits{s}{k}$ to denote sequences in the alphabet $\{0,\dots,s-1\}$ of length $k$.  For a
  sequence $w$, we write $|w|$ to denote its length.  When $1\leq i\leq j\leq |w|$, we call
  $(w_i,\dots,w_j)$ a block of $w$.  The number of occurrences of the block $u$ in $w$ is
  $\occ(w,u)=\#\{ i: (w_i,\dots,w_{i+|u|-1})=u \}$.
\end{notation}

\begin{lemma}\label{3.9}
  Let $s$ and $r$ be bases, $a$ be a positive integer and $\epsilon$ be a real between $0$ and~$1$.
  There is a finite set of intervals $F$ and a positive integer $\ell_0$ such that for all $\ell\geq\ell_0$
  and all $\xi_0$, if $\xi\in[\xi_0,\xi_0+s^{-\base{a+\ell}{s}})$ and $D(F,(\expa{r^j \xi_0}:\base{a}{r} <  j\leq \base{a+\ell}{r}))<(\epsilon/10)^4$ then
  $D(\expa{r^j \xi}:\base{a}{r}< j\leq \base{a+\ell}{r})<\epsilon$.
  Furthermore, $\ell_0$ and $F$ can be taken as computable functions of $r$ and $\epsilon$.
\end{lemma}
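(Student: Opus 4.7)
The plan is to combine Lemma~\ref{3.1} (which reduces discrepancy control to control over a fixed finite family of intervals) with a direct comparison of $\expa{r^j\xi}$ and $\expa{r^j\xi_0}$ viewed as points on the circle $[0,1)$ with endpoints identified. I take $F$ to be the partition of $[0,1]$ into consecutive intervals of equal length $\eta$, where $\eta$ is a small parameter of order $\epsilon^2$ to be fixed at the end, and I verify the bound on $D(\expa{r^j\xi}:\base{a}{r}<j\leq\base{a+\ell}{r})$ through an auxiliary partition $F'$ of $[0,1]$ into intervals of equal length at most $\epsilon/3$, to which Lemma~\ref{3.1} applies.

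Write $\xi=\xi_0+\delta$ with $0\leq\delta<s^{-\base{a+\ell}{s}}$, so $\expa{r^j\xi}$ equals $\expa{r^j\xi_0}$ translated on the circle by $r^j\delta$ modulo $1$. Split the index range $(\base{a}{r},\base{a+\ell}{r}]$ into $J_{\mathrm{small}}=\{j:r^js^{-\base{a+\ell}{s}}<\eta\}$ and its complement $J_{\mathrm{big}}$. Using the elementary inequalities $e^b\leq r^{\base{b}{r}}\leq r\,e^b$ and $s^{-\base{b}{s}}\leq e^{-b}$, one sees that $|J_{\mathrm{big}}|$ is bounded by $1+|\log\eta|/\log r$, a constant depending on $r$ and $\epsilon$ but not on $a$, $\ell$ or $s$. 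Since $N:=\base{a+\ell}{r}-\base{a}{r}$ grows linearly in $\ell$, I can pick $\ell_0$ so that $|J_{\mathrm{big}}|/N<(\epsilon/3)^2/3$ whenever $\ell\geq\ell_0$.

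For each $I\in F'$, the count $\#\{j\in J_{\mathrm{small}}:\expa{r^j\xi}\in I\}$ is sandwiched between $\#\{j\in J_{\mathrm{small}}:\expa{r^j\xi_0}\in I^-\}$ and $\#\{j\in J_{\mathrm{small}}:\expa{r^j\xi_0}\in I^+\}$, where $I^-$ (resp.\ $I^+$) is the $\eta$-contraction (resp.\ $\eta$-enlargement) of $I$ on the circle, with $|I^\pm|=|I|\pm 2\eta$. Each $I^\pm$ is a disjoint union of at most $|I|/\eta+O(1)$ elements of $F$ plus at most two boundary fragments of length less than $\eta$. Applying the hypothesis $D(F,(\expa{r^j\xi_0}:\base{a}{r}<j\leq\base{a+\ell}{r}))<(\epsilon/10)^4$ to each relevant $F$-interval, summing, and adding the contribution of $J_{\mathrm{big}}$ yields
\[
\bigl|\tfrac{1}{N}\#\{j:\expa{r^j\xi}\in I\}-|I|\bigr|\leq 4\eta+(|I|/\eta+O(1))(\epsilon/10)^4+|J_{\mathrm{big}}|/N.
\]
Choosing $\eta$ of order $\epsilon^2$ balances the first two terms below $(\epsilon/3)^2/3$, and the third term is controlled by the choice of $\ell_0$. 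Hence $D(F',\vec\xi)<(\epsilon/3)^2$, and Lemma~\ref{3.1} delivers $D(\vec\xi)<\epsilon$.

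The main obstacle will be bookkeeping the circle wrap-around cleanly: when $I\in F'$ sits near $0$ or $1$, each of $I^\pm$ splits into two arcs inside $[0,1)$, and the $F$-intervals at the ends of $I^\pm$ contribute partial counts rather than full ones. None of this changes the order of the estimate, but each rough inequality must be charged a bounded number of extra boundary $F$-intervals and a few extra $O(\eta)$ terms. Once those constants are tracked, a concrete choice like $\eta=\epsilon^2/200$ suffices, and $\ell_0$ comes out of order $\log r\cdot\log(1/\epsilon)/\epsilon^2$; both $F$ and $\ell_0$ are then computable functions of $r$ and $\epsilon$, as required.
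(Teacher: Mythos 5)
Your proposal is correct and follows essentially the same route as the paper's proof: both take $F$ to be a partition of $[0,1]$ into intervals of length of order $\epsilon^2$, split the index range according to whether $r^j s^{-\base{a+\ell}{s}}$ is below the mesh of $F$, estimate the small‑$j$ count via an $F$‑enlargement of the target interval (absorbing the finitely many large $j$ by choosing $\ell_0$), and finish with Lemma~\ref{3.1} applied to a partition of mesh at most $\epsilon/3$. The only differences are cosmetic (two‑sided rather than one‑sided enlargements, and slightly different numerical constants).
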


\begin{proof}
  Let $F_\epsilon$ be as in Lemma~\ref{3.1} and let $I$ be an interval in $F_\epsilon$.  Let $n$
  denote $\ceil{100/\epsilon^2}.$ Let $F$ be the set of semi-open intervals $B_c=[c/n, (c+1)/n)$,
  where $0\leq c<n$.  For the sake of computing $\ell_0$, consider $b>a$, $\xi$ and $\xi_0$ such
  that $\xi\in[\xi_0,\xi_0+s^{-\base{b}{s}})$.  Assume $D(F,(\expa{r^j
    \xi_0}:\base{a}{r}< j\leq \base{b}{r}))<(\epsilon/10)^4$.

  Note that for all $j$ less than $\base{b}{r}-\log n/\log r-1$, we have $r^js^{-\base{b}{s}}< 1/n$. 
   Hence,
  for all but the last $\ceil{\log n/\log r}+2$ values of $j$, $|r^j\xi_0 - r^j\xi|<1/n$.
  Let $C$ be the set of intervals $B_c$ such that either $B_c$ or $B_{c+1}$ has non-empty
  intersection with $I$.  If $j$ is less than $\base{b}{r}-\log n/\log r-1$ then 
  $\expa{r^j\xi}\in I$ implies that $\expa{r^j\xi_0}\in \cup C$.  
  Observe that $\#C\leq \ceil{n \measure{I}}+2$.  The fraction
\[
\frac1{\base{b}{r}-\base{a}{r}}
\#\{j : \base{a}{r}< j <\base{b}{r} -\log n/\log r-1  \text{ and } \expa{r^j\xi}\in \cup C\} 
\]
is at most $\ceil{n\measure{I}+2} (1/n+\epsilon^4/10^4)$.
And by definition of $n$,
$$  
  (n\measure{I}+3) (1/n+(\epsilon/10)^4)   \leq
  % \measure{I}+n\measure{I}\epsilon^4/10^4 +2/n +2\epsilon^4/10^4\\
 \measure{I}+\ceil{100/\epsilon^2}(\epsilon/10)^4 +
    3/\ceil{100/\epsilon^2}+3(\epsilon/10)^4  
\leq 
% \measure{I}+ \epsilon^2/54+ \epsilon^2/54+ \epsilon^2/54\\
  \measure{I}+ (1/2)(\epsilon/3)^2.
$$
There are at most $\ceil{\log n/\log r}+2$ remaining  $j$, 
those for which $j\geq \base{b}{r}-\log n/\log r-1$.
Suppose that for each such $j$,  $r^j\xi\in I$.  Then,
   \begin{align*}
\frac{\ceil{\log n/\log r} + 2}{\base{b}{r}-\base{a}{r}} &\leq
\frac{\log n/\log r + 3}{\base{b}{r}-\base{a}{r}} 
\leq
\frac{\log \ceil{100/\epsilon^2} +3\log r}{b-a-\log r}.
%\frac1{\base{b}{r}-\base{a}{r}} (\ceil{\log n/\log r} + 2)&\leq\frac1{\base{b}{r}-\base{a}{r}}
%(\log n/\log r + 3) \leq
%\frac{\log \ceil{100/\epsilon^2} +3\log r}{b-a-\log r}.
   \end{align*}
   Let $\ell_0$ be $\bigceil{\log r+\frac{18}{\epsilon^2}\ceil{\log \ceil{100/\epsilon^2} +3\log
       r}}$.  For $b\geq a+\ell_0$, $\frac{\log \ceil{100/\epsilon^2} +3\log r}{b-a-\log
     r}<(1/2)(\epsilon/3)^2$.  A similar argument yields the same estimates for the needed lower
   bound.  Then, for $F_\epsilon$ and any $b\geq a+\ell_0$, $\displaystyle{
     D(F_\epsilon,(\expa{r^j\xi}:\base{a}{r}< j\leq\<b;r\>))<(\epsilon/3)^2.
   }$
   By applying Lemma~\ref{3.1}, for any $\ell\geq \ell_0$,
   $D(\expa{r^j \xi}:\base{a}{r}< j\leq
   \base{a+\ell}{r})<\epsilon$.
\end{proof}

\begin{definition}
  Fix a base $s$.  The \emph{discrete discrepancy} of $w\in\digits{s}{N}$ for a block of size $\ell$ is
  \[
  C(\ell,w)=\max\left\{\left|\frac{occ(w,u)}{N}-\frac{1}{s^\ell}\right|:u\in\digits{s}{\ell}\right\}.
\]
\end{definition}

The next lemmas relate the discrete discrepancy of sequences in $w\in\digits{s}{N}$ to the discrepancy of their
associated sequences of real numbers.

\begin{lemma}\label{3.11}
  Let $\epsilon$ be a positive real, $s$ a base, $\ell$ and $N$ positive integers such that
  $s^\ell>3/\epsilon$ and $N>2\ell(3/\epsilon)^2$, and $w\in\digits{s}{N}$ such that
  $C(\ell,w)<\epsilon^2/18$.  Then, $D(\expa{s^j\eta_w}:0\leq j<N)<\epsilon$, where
  $\eta_w=\sum_{j=1}^{|w|} w_j s^{-j}$.
\end{lemma}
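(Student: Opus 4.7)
The plan is to introduce the $s^\ell$-equal partition $F=\{B_a=[a/s^\ell,(a+1)/s^\ell):0\le a<s^\ell\}$ of $[0,1)$, show that $D(F,(\expa{s^j\eta_w}:0\le j<N))<(\epsilon/3)^2$, and then invoke Remark~\ref{3.2} and Lemma~\ref{3.1}. Since the hypothesis $s^\ell>3/\epsilon$ makes each $B_a$ have equal length $s^{-\ell}<\epsilon/3$, Remark~\ref{3.2} converts an $F$-discrepancy bound of $(\epsilon/3)^2=\epsilon^2/9$ into the desired $D(\expa{s^j\eta_w}:0\le j<N)<\epsilon$.

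The key step is a digit-block correspondence. For $0\le j\le N-\ell$ one has $\expa{s^j\eta_w}=\sum_{m=1}^{N-j}w_{j+m}s^{-m}$, whose first $\ell$ base-$s$ digits are exactly $(w_{j+1},\dots,w_{j+\ell})$, so $\expa{s^j\eta_w}\in B_a$ precisely when this block equals the base-$s$ representation $u_a\in\digits{s}{\ell}$ of $a$ (with leading zeros). Summing over $0\le j<N$ therefore gives
\[
\card\{j:0\le j<N,\ \expa{s^j\eta_w}\in B_a\}=\occ(w,u_a)+E_a,
\]
where $E_a\ge 0$ accounts for the last $\ell-1$ boundary indices $j\in\{N-\ell+1,\dots,N-1\}$ whose truncated tail happens to lie in $B_a$. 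In particular $E_a\le \ell-1$.

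From $C(\ell,w)<\epsilon^2/18$ and the hypothesis $N>2\ell(3/\epsilon)^2=18\ell/\epsilon^2$, which gives $\ell/N<\epsilon^2/18$, one estimates for every $a$
\[
\Bigl|\frac{\card\{j:\expa{s^j\eta_w}\in B_a\}}{N}-\frac{1}{s^\ell}\Bigr|\le \Bigl|\frac{\occ(w,u_a)}{N}-\frac{1}{s^\ell}\Bigr|+\frac{E_a}{N}<\frac{\epsilon^2}{18}+\frac{\epsilon^2}{18}=\frac{\epsilon^2}{9}.
\]
Taking the maximum over the finite collection of $a$ yields $D(F,(\expa{s^j\eta_w}:0\le j<N))<\epsilon^2/9=(\epsilon/3)^2$, and the result then follows by Remark~\ref{3.2} and Lemma~\ref{3.1}. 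The only real subtlety is the boundary bookkeeping: the at most $\ell-1$ trailing indices must be absorbed into the discrepancy budget, and the hypothesis $N>2\ell(3/\epsilon)^2$ is precisely the threshold that makes $\ell/N$ fit into the same $\epsilon^2/18$ slack that the hypothesis on $C(\ell,w)$ already leaves available.
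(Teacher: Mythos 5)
Your proof is correct and takes essentially the same route as the paper: partition $[0,1)$ into the $s^\ell$ equal-length $s$-adic intervals, identify $\expa{s^j\eta_w}\in B_a$ for $j\le N-\ell$ with an occurrence of the block $u_a$ in $w$, absorb the at most $\ell-1$ trailing indices using $N>18\ell/\epsilon^2$, and conclude via Lemma~\ref{3.1} and Remark~\ref{3.2}. The bookkeeping ($\epsilon^2/18+\epsilon^2/18=\epsilon^2/9=(\epsilon/3)^2$) matches the paper's.
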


\begin{proof}
  Let $\ell$ be such that $s^{-\ell}<\epsilon/3$.  Let $F$ be the set of $s$-adic intervals of
  length $s^{-\ell}$.  Any $I$ in $F$ has the form $[\eta_u,\eta_u+s^{-\ell})$, for some
  $u\in\digits{s}{\ell}$, and further, $\expa{s^j\eta_w}\in I$ if and only if the block $u$ occurs
  in $w$ at position $j+1$.  Thus, we can count instances of $\expa{s^j\eta_w}\in I$ by counting
  instances of $u$ in $w$.  Let $N$ and $w$ be given so that $N>2\ell(3/\epsilon)^2$,
  $w\in\digits{s}{N}$ and $C(\ell,w)<(\epsilon/3)^2/2$.  Then, for any $u\in\digits{s}{\ell}$,
  $\left| occ(w,u)/N-s^{-\ell} \right|<(\epsilon/3)^2/2$.  For any $I\in F$, $\displaystyle{
    \frac1N\card\left\{j:\expa{s^j\eta_w}\in I \mbox{ and } 0\leq j<N\right\}
    <s^{-\ell}+(\epsilon/3)^2/2+(\ell-1)/N< s^{-\ell}+(\epsilon/3)^2.}$ A similar count gives the
  analogous lower bound.  Hence, $D(F,(\expa{s^j\eta_w}:0\leq j<N))<(\epsilon/3)^2$ and so
  $D(\expa{s^j\eta_w}:0\leq j<N)<\epsilon$, by application of Lemma~\ref{3.1} and Remark~\ref{3.2}.
\end{proof}

\begin{lemma}[see Theorem~148, \protect{\cite{hardy}}]\label{3.12}
  For any base $s$, for any positive integer $\ell$ and for any positive real numbers $\epsilon$ and $\delta$, 
  there is an $N_0$ such that for all $N\geq N_0$,
  \[
    {\card}\Bigl\{v\in \digits{s}{N}: C(\ell,v)\geq\epsilon\Bigr\} < \delta s^N.   
  \]
  Furthermore, $N_0$ is a computable function of $s$, $\epsilon$ and $\delta$.
\end{lemma}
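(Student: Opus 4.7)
The plan is to fix a block $u\in\digits{s}{\ell}$, bound the fraction of $w\in\digits{s}{N}$ for which $|\occ(w,u)/N-s^{-\ell}|\geq\epsilon$ by a second-moment argument, and then union-bound over the $s^\ell$ choices of $u$.

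Writing $\occ(w,u)=\sum_{i=1}^{N-\ell+1}X_i(w)$, where $X_i(w)$ is the indicator that $w_i\cdots w_{i+\ell-1}=u$, and treating $w$ as uniform on $\digits{s}{N}$, each $X_i$ is Bernoulli with mean $s^{-\ell}$ and the mean of $\occ(w,u)$ is $(N-\ell+1)s^{-\ell}$. The key structural fact is that $X_i$ and $X_j$ are independent whenever $|i-j|\geq\ell$, because they then depend on disjoint coordinates of $w$. Hence in the variance expansion only pairs with $|i-j|<\ell$ contribute a nonzero covariance, each of absolute value at most $s^{-\ell}\leq 1$, so $\mathrm{Var}(\occ(w,u))\leq c\,\ell N$ for an explicit constant $c$. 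Chebyshev's inequality then shows that, once $N$ is large enough for $(N-\ell+1)/s^\ell$ to lie within $\epsilon N/2$ of $N/s^\ell$, the fraction of $w$ with $|\occ(w,u)/N-s^{-\ell}|\geq\epsilon$ is at most $c'\ell/(\epsilon^2 N)$ for an explicit constant $c'$.

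Summing over the $s^\ell$ choices of $u$, the fraction of $w\in\digits{s}{N}$ with $C(\ell,w)\geq\epsilon$ is at most $c'\,s^\ell\ell/(\epsilon^2 N)$, which drops below $\delta$ once $N\geq N_0$ for an $N_0$ of the form $\ceil{c'\,s^\ell\ell/(\epsilon^2\delta)}$ plus a lower-order term handling the expectation shift. Multiplying through by $s^N$ gives the counting bound in the statement, and every quantity along the way is explicit, so $N_0$ is computable from $s$, $\ell$, $\epsilon$, and $\delta$.

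The main obstacle is the dependence among overlapping indicators $X_i$; treated naively this could inflate the variance to order $N^2$ and destroy the Chebyshev bound. The resolution is that each $X_i$ is correlated with only the $O(\ell)$ indicators $X_j$ at positions with $|i-j|<\ell$, keeping the variance linear in $N$. A clean alternative would split the $N-\ell+1$ starting positions into $\ell$ arithmetic progressions of common difference $\ell$, on each of which the relevant indicators are genuinely independent Bernoulli trials, and apply Chebyshev (or a Chernoff bound) to each sub-sum before summing the failure probabilities; either route delivers the same linear-in-$N$ variance estimate and the same computable $N_0$.
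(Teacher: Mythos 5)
The paper does not actually prove Lemma~\ref{3.12}; it cites Theorem~148 of Hardy and Wright and leaves it as an external result. Your second-moment argument supplies a correct, self-contained proof of exactly what is used. The key covariance estimate is right: $\occ(w,u)=\sum_{i=1}^{N-\ell+1}X_i$ with each $X_i$ Bernoulli of mean $s^{-\ell}$; indicators at positions $\geq\ell$ apart look at disjoint coordinates and are independent, while for each $i$ there are fewer than $2\ell$ positions $j$ with $|i-j|<\ell$, each contributing covariance of magnitude at most $s^{-\ell}$, so $\mathrm{Var}(\occ(w,u))\leq 2\ell N$. The expectation shift from $(N-\ell+1)s^{-\ell}$ to $Ns^{-\ell}$ is $O(\ell)$ and absorbed once $N\geq 2(\ell-1)/\epsilon$, so Chebyshev with threshold $\epsilon N/2$ gives a failure fraction of at most $8\ell/(\epsilon^2 N)$ for a fixed block, and the union bound over the $s^\ell$ blocks yields the explicit $N_0=\max\bigl\{\lceil 8\ell s^\ell/(\epsilon^2\delta)\rceil,\ \lceil 2(\ell-1)/\epsilon\rceil,\ \ell\bigr\}$. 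Your alternate route, splitting the starting positions into $\ell$ residue classes modulo $\ell$ to get genuine independence within each class, is the more classical device and would work equally well. One remark worth flagging: your $N_0$ (correctly) depends on $\ell$ in addition to $s$, $\epsilon$ and $\delta$; the lemma's ``$N_0$ is a computable function of $s$, $\epsilon$ and $\delta$'' appears to have inadvertently dropped $\ell$, and the dependence is genuinely needed since $C(\ell,\cdot)$ involves $\ell$ and Lemma~\ref{3.14} invokes Lemma~\ref{3.12} with varying $\ell$.
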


The next lemma is specific to base $2$ and will be applied in the proof of Theorem~\ref{5}.

\begin{lemma}\label{3.13}
  Given a positive real number $\epsilon$, there is an $N_0$ such that for all $N\geq N_0$,
  \[
  {\card}\Bigl\{v\in \digits{2}{N}:
  \frac{1}{2N}\card\bigl\{m:\expa{2^m\eta_v}\in[0,1/2)\bigr\}\geq 5/8
\Bigr\}> (1-\epsilon)2^{N}
\]
where for $v=(v_1,\dots,v_N)\in \digits{2}{N}$, $\eta_v=\sum_{j=1}^N v_j4^{-j}$. 
Furthermore, $N_0$ is a computable function of $\epsilon$.
\end{lemma}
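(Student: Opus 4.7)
My plan is to exploit the very special form of $\eta_v$: since $\eta_v = \sum_{j=1}^N v_j 4^{-j} = \sum_{j=1}^N v_j 2^{-2j}$ with $v_j\in\{0,1\}$, the binary expansion of $\eta_v$ is the $2N$-bit string whose $(2j-1)$-th bit is $v_j$ and whose $(2j)$-th bit is $0$, for $j=1,\dots,N$. So every ``even-index'' bit is automatically $0$, and half of the positions in the relevant range contribute to $[0,1/2)$ for free.

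Interpreting the count in the statement as $m$ ranging over $\{0,1,\dots,2N-1\}$ (which matches the normalizing factor $1/(2N)$), observe that $\{2^m\eta_v\}\in[0,1/2)$ iff the $(m+1)$-th binary digit of $\eta_v$ equals $0$. Splitting by the parity of $m$, I would write
\begin{align*}
\card\{m\in\{0,\dots,2N-1\}:\expa{2^m\eta_v}\in[0,1/2)\} &= N + \card\{j:1\leq j\leq N,\ v_j=0\}.
\end{align*}
Dividing by $2N$, the hypothesis becomes the clean combinatorial condition that the number of zeros in $v$ is at least $N/4$, equivalently $\occ(v,0)/N\geq 1/4$.

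Now I would invoke Lemma~\ref{3.12} with $s=2$, $\ell=1$, $\epsilon_{\ref{3.12}}=1/4$ and $\delta_{\ref{3.12}}=\epsilon$: for all sufficiently large $N$, the number of $v\in\digits{2}{N}$ with $C(1,v)\geq 1/4$ is less than $\epsilon\cdot 2^N$. Any $v$ with fewer than $N/4$ zeros has $|\occ(v,0)/N-1/2|>1/4$, hence $C(1,v)>1/4$, so such $v$ form less than an $\epsilon$-fraction of $\digits{2}{N}$. This gives the required $(1-\epsilon)2^N$ bound, and the desired $N_0$ is obtained from the $N_0$ of Lemma~\ref{3.12}, which is computable in $\epsilon$.

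There is no serious obstacle here; the only thing to be careful about is the bookkeeping on which bit of $\eta_v$ controls which $m$. A mild subtlety is that $\eta_v$ has a $0$ beyond position $2N$, so if $m$ were allowed to range larger, the count would continue to be dominated by automatic zeros; but under the natural reading $m\in\{0,\dots,2N-1\}$ the identity above is exact and the Chernoff-type estimate supplied by Lemma~\ref{3.12} closes the proof.
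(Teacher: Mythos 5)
Your argument is correct and follows essentially the same route as the paper: both hinge on Lemma~\ref{3.12} with $s=2$, $\ell=1$, together with the observation that the number of $m\in\{0,\dots,2N-1\}$ with $\expa{2^m\eta_v}\in[0,1/2)$ equals $N+\occ(v,0)$, so the stated condition is just $\occ(v,0)\geq N/4$. One small slip worth noting: since $\eta_v=\sum_{j}v_j2^{-2j}$, the $(2j)$-th binary digit of $\eta_v$ is $v_j$ and the $(2j-1)$-th digit is $0$, not the reverse as you wrote; this does not affect your count or the conclusion.
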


\begin{proof}
By Lemma~\ref{3.12}, for any positive $\delta$ there is an $N_0$ such that for all $N\geq N_0$,
\[
{\card}\Bigl\{v\in \digits{2}{N}: C(1,v)\leq\delta\Bigr\} \geq (1-\epsilon) 2^{N}.
\]
Thus, for $(1-\epsilon)2^N$ many $v$,
$\displaystyle{
\left|\frac{\card\{n:v_n=0\}}{N}-\frac12\right|<\delta \quad\mbox{and}\quad
\left|\frac{\card\{n:v_n=1\}}{N}-\frac12\right|<\delta.
}$
Consider the natural bijection $V$ between $\digits{2}{N}$ and the set $\L$ of sequences
of length $N$ of symbols from $\{(00),(01)\}$.  Then for $(1-\epsilon)2^N$ many $v\in\digits{2}{N}$, 
\[
\left|\frac{\card\{n:V(v)_n=(00)\}}{N}-\frac12\right|<\delta
\quad\mbox{and}\quad \left|\frac{\card\{n:V(v)_n=(01)\}}{N}-\frac12\right|<\delta.
\]
We can construe each length $N$ sequence $V(v)$ from $\L$ as a length $2N$ binary
sequence $V^*(v)$.  Under this identification, $\eta_v=\sum_{j=1}^{2N}V^*(v)_j2^{-j}=\sum_{j=1}^N
v_j4^{-j}.$  For any $v\in\digits{2}{N}$,
\[
 \card\{m:V^*(v)_m=0\}=2\card\{n:V(v)_n=(00)\}+\card\{n:V(v)_n=(01)\}.
\]
So, for $(1-\epsilon)2^N$ many $v\in\digits{2}{N}$, 
\[
\card\{m:V^*(v)_m=0\}\geq 2 (1/2-\delta)N+(1/2-\delta)N=3/2N-3\delta N.
\]
Thus, $\card\{m:\expa{2^m\eta_v}\in[0,1/2) \mbox{ and }0\leq m< 2N\}\geq  (3/2)N-3\delta N$.
Hence, 
\[
\frac{1}{2N}\card\{m:\expa{2^m\eta_v}\in[0,1/2)\}\geq 3/4-3\delta/2.
\]
For $\delta=1/12$, the lemma follows.
\end{proof}

\begin{lemma}\label{3.14}
  Let $\epsilon$ be a positive real and let $s$ be a base.  There is a $k_0$ such that for every
  $k\geq k_0$ there is an $N_0$ such that for all $N\geq N_0$,
  \[
  \card\Bigl\{w\in \digits{\stilde}{N}:
     D(\expa{s^j\eta_w}: 0\leq j< kN)<\epsilon\Bigr\}
     > (1/2) \stilde^{\, N},
     \]
where $\stilde$ is either of $s^k-1$ or $s^k-2$, and for $w=(w_1,\dots,w_N)\in \digits{\stilde}{N}$,
$\eta_w=\sum_{j=1}^N w_j(s^{k})^{-j}$.  Furthermore, $k_0$ is a
computable function of $s$ and $\epsilon$ and $N_0$ is a computable function of $s$, $\epsilon$ and
$k$.
\end{lemma}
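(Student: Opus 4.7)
The plan is to reduce the continuous discrepancy statement to a discrete discrepancy statement via Lemma~\ref{3.11}, and then prove concentration of that discrete discrepancy for a uniformly random $w\in\digits{\stilde}{N}$ by a direct first- and second-moment calculation. Given $\epsilon$ and $s$, choose $\ell_0$ minimal with $s^{\ell_0}>3/\epsilon$, and write $v_w\in\digits{s}{kN}$ for the base-$s$ expansion of $\eta_w$ (each super-digit $w_j$ contributing its $k$ base-$s$ digits, so that $\eta_{v_w}=\eta_w$). By Lemma~\ref{3.11}, once $kN>2\ell_0(3/\epsilon)^2$ and $C(\ell_0,v_w)<\epsilon^2/18$, the discrepancy $D(\expa{s^j\eta_w}:0\leq j<kN)$ is less than~$\epsilon$. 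It therefore suffices to show $C(\ell_0,v_w)<\epsilon^2/18$ for more than $\stilde^N/2$ of the $w\in\digits{\stilde}{N}$.

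For a fixed target block $u\in\digits{s}{\ell_0}$, I split $\occ(v_w,u)=A_u(w)+B_u(w)$, where $A_u$ counts occurrences of $u$ that fit strictly inside a single super-digit's base-$s$ expansion and $B_u$ counts occurrences straddling two consecutive super-digits. The straddling term satisfies $B_u\leq(\ell_0-1)N$ deterministically, so its contribution to $\occ(v_w,u)/(kN)$ is at most $(\ell_0-1)/k$, which is made negligible by taking $k$ large. The internal term $A_u=\sum_{j=1}^N f_u(w_j)$ is a sum of $N$ i.i.d.\ random variables with values in $\{0,\dots,k-\ell_0+1\}$; a direct count shows $\bigl|\mathbb{E}[f_u(w_j)]-(k-\ell_0+1)s^{-\ell_0}\bigr|=O(k/s^k)$, the bias stemming from the one or two values of $w_j$ excluded from $\{0,\dots,s^k-1\}$, and using $f_u\leq k$ I get $\operatorname{Var}[f_u(w_j)]\leq k\cdot\mathbb{E}[f_u(w_j)]=O(k^2 s^{-\ell_0})$.

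Chebyshev's inequality on the i.i.d.\ sum $A_u$ then yields $\Pr[|A_u/(kN)-\mathbb{E}[A_u]/(kN)|\geq\epsilon^2/72]=O(s^{-\ell_0}/(N\epsilon^4))$, and a union bound over the $s^{\ell_0}$ blocks $u\in\digits{s}{\ell_0}$ produces a total failure probability of order $1/(N\epsilon^4)$. I take $k_0$ (a computable function of $s$ and $\epsilon$) large enough that the deterministic error terms $O(s^{-k})$ and $(\ell_0-1)/k$ together fall below $\epsilon^2/36$, and for each $k\geq k_0$ I take $N_0$ (a computable function of $s$, $\epsilon$, $k$) large enough that the Chebyshev union bound drops below $1/2$ and that $kN_0>2\ell_0(3/\epsilon)^2$. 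Since the $w\in\digits{\stilde}{N}$ are equidistributed under the counting measure, this delivers the claimed lower bound of $\stilde^N/2$ on the set of good $w$.

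The main obstacle is the non-uniformity of the induced distribution on length-$k$ base-$s$ blocks when $w_j$ is restricted to $\{0,\dots,\stilde-1\}$ rather than $\{0,\dots,s^k-1\}$: the block $u=(s-1)^{\ell_0}$ and its local translates relative to $(s-1)^k$ (and, in the case $\stilde=s^k-2$, also relative to the base-$s$ expansion of $s^k-2$) exhibit bias of a different sign than generic $u$, so the estimate $O(k/s^k)$ must be verified uniformly in $u$ for both excluded values. The straddling contribution $B_u$ is the other potentially delicate point, since its summands $g_u(w_j,w_{j+1})$ are not independent across adjacent indices; I deliberately bypass a second-moment analysis of $B_u$ by relying only on its deterministic worst-case bound $(\ell_0-1)/k$, so that only $A_u$, with its clean i.i.d.\ structure, needs variance control.
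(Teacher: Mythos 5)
Your proof is correct, but it takes a genuinely different route from the paper's. Both arguments reduce to Lemma~\ref{3.11} after controlling the discrete discrepancy $C(\ell_0,v_w)$, and both split the occurrence count into an internal part (occurrences lying inside a single length-$k$ super-digit) and a boundary part (bounded deterministically by $(\ell_0-1)N$). The difference is in how the internal part is controlled. The paper invokes Lemma~\ref{3.12} (Hardy--Wright, Theorem~148) twice, at two scales: first to classify length-$k$ base-$s$ blocks as good-for-$\ell_0$ or not (showing a $(1-\delta)$-fraction are good once $k$ is large, and observing that the one or two excluded super-digit values are among the bad ones), and then again over the alphabet $\digits{\stilde}{1}$ to show that most $w\in\digits{\stilde}{N}$ have $C(1,w)<\delta$, so each super-digit value appears in $w$ close to its expected number of times; the internal count is then estimated by summing the per-super-digit contributions over the good/bad classification. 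You instead observe that the internal count $A_u=\sum_j f_u(w_j)$ is an i.i.d.\ sum and run a direct first- and second-moment Chebyshev argument, together with a union bound over the $s^{\ell_0}$ target blocks. Your computation of the mean bias $O(k/s^k)$ stemming from the excluded values, and your verification that this bias is uniform in $u$, is correct and is indeed the delicate point: the paper handles the same issue by noting the excluded strings are exactly among the not-good-for-$\ell$ blocks. Your route is more self-contained (it does not rely on Lemma~\ref{3.12}), while the paper's is arguably leaner since Lemma~\ref{3.12} is already in the toolkit; both yield computable $k_0$ and $N_0$, and the numerical bookkeeping ($\epsilon^2/72$ for the Chebyshev threshold and $\epsilon^2/36$ for the deterministic slack, summing to $\epsilon^2/24<\epsilon^2/18$) is consistent with the hypothesis of Lemma~\ref{3.11}.
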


\begin{proof}
  Fix the real $\epsilon$ (to be used only at the end of the proof) and fix the base $s$.
  By Lemma~\ref{3.12}, for each real $\delta>0$ and integer $\ell>0$ there is
  $k_0$ such that $\ell/k_0<\delta$ and for all $k\geq k_0$
  \[
    \card\Bigl\{v\in \digits{s}{k}: C(\ell,v)<\delta \Bigr\}>(1-\delta)s^k.
    \]
    Consider such a $k$.  The elements $v\in\digits{s}{k}$ are of two types: those good-for-$\ell$
    with $C(\ell,v)<\delta$ and the others.  By choice of $k$, $(1-\delta)s^k$ blocks of length
    $k$ are good-for-$\ell$.  Let $\stilde$ be either $s^k-1$ or $s^k-2$.  Now view
    $\digits{\stilde}{1}$ in base $s$.  If $\stilde$ is $s^k-1$, then $\digits{\stilde}{1}$ lacks
    the not-good-for-$\ell$ block of $k$ digits all equal to $s-1$.  If $\stilde$ is $s^k-2$, then
    $\digits{\stilde}{1}$ also lacks the not-good-for-$\ell$ block of $k-1$ digits equal to $s-1$ followed by
    the final digit $s-2$.  So, at least $(1-\delta)$ of the elements in $\digits{\stilde}{1}$ are
    good-for-$\ell$ in that they correspond to good blocks of length $k$.
    Let $N_0$ be such that for all $N\geq N_0$,
  \[
    \card\Bigl\{w\in \digits{\stilde}{\ N}: C(1,w)<\delta \Bigr\}>(1-\delta)\stilde^{\, N}.
  \]
  Take $N\geq N_0$ and consider a sequence $w$ in $\digits{\stilde}{N}$.  If $C(1,w)<\delta$,
  then each element in $\digits{\stilde}{1}$ occurs in $w$ at least $N(1/\stilde-\delta)$ times.  
  Let $w\mapsto w^*$ denote the map that takes
  $w\in\digits{\stilde}{N}$ to $w^*\in\digits{s}{kN}$ such that $\displaystyle{
      \sum_{n=1}^{N}w_{n}(s^k)^{-n}=\sum_{n=1}^{kN} w^*_{n+1}\,s^{-n}}$.  
Let  $u\in\digits{s}{\ell}$. 
We obtain the following bounds for $\occ(u,w^*)$:
 \begin{align*}
 \occ(u,w^*)
 \leq\ & N(1/s^{\ell}+\delta)k+2\ell N+\delta Nk
 \\
 \leq\ &Nk(1/s^{\ell}+2\delta+2\ell/k).
\\
    \occ(u,w^*)
    \geq\ &  \sum_{i=0}^{N-1}\occ(u,(w^*_{ik+1},\dots,w^*_{ik+k}))&
\\
     \geq\    &  \stilde(1-\delta)   N(1/\stilde - \delta)  k (1/s^\ell-\delta)
       = Nk  (1-\delta) (1 - \stilde  \delta)     (1/s^\ell-\delta)
\\
      \geq\    &  Nk (1/s^\ell -\delta - s^k\delta/s^\ell-\stilde\delta^3) 
\\
      \geq\ & Nk(1/s^\ell-\delta s^k). \quad\text{(We can assume that $\delta<1/2$.)}
\end{align*}
 So $C(\ell,w^*)< \delta s^k$.  Hence,
    $    \card\Bigl\{w\in \digits{\stilde}{N}: C(\ell, w^*)< \delta s^k \Bigr\} \geq
    (1-\delta) \stilde^{\, N}.$
    Let $\delta= s^{-k}(\epsilon^2/18)$.  Then, 
       \begin{flalign*}
      \card\Bigl\{w\in \digits{\stilde}{N}: C(\ell, w^*)< \epsilon^2/18\}) \Bigr\}
      \geq & (1- s^{-k}(\epsilon^2/18))\stilde^{\, N}.
      \end{flalign*}
      In particular, this inequality holds for the minimal $\ell$ satisfying $s^\ell>3/\epsilon.$
      Since, $\epsilon$ can be chosen so that $(1- s^{-k}(\epsilon^2/18))$ is at least $1/2$, we can
      apply Lemma~\ref{3.11} to conclude the wanted result:
      $\displaystyle{
      \card\Bigl\{w\in \digits{\stilde}{N}: D(\expa{s^j\eta_w}: 0\leq j< kN)<\epsilon\Bigr\} >
      (1/2) \stilde^{\, N}.}$      
\end{proof}

\subsection{Schmidt's Lemmas}

Lemma~\ref{3.17}  is our  analytic tool to control discrepancy for multiplicatively independent
bases.  It  originates in \cite{Sch61}.  Our proof adapts the version  given in \cite{Pol81}.  

\addtocounter{footnote}{1}

\begin{lemma}[Hilfssatz~5, \cite{Sch61}]\label{3.15}
  Suppose that $r$ and $s$ are multiplicative independent bases. 
  There is a constant $c$, with $0<c<1/2$, depending only on $r$ and $s,$ such that for all natural
  numbers $K$ and $\ell$ with $\ell\geq s^K$,
\[
  \sum_{r=0}^{N-1}\prod_{k=K+1}^\infty |\cos(\pi r^n\ell/s^k)|\leq 2 N^{1-c}.    
  \]
  Furthermore, $c$ is a computable function of $r$ and $s$.\footnote{Actually, Schmidt asserts 
the computability of $c$ in separate paragraph (page 309 in the same article): 
``Wir stellen zun\"achst fest,
da\ss man mit etwas mehr M\"uhe Konstanten $a_{20}(r, s)$ aus Hilfssatz~5 explizit berechnen
k\"onnte, und da\ss\ dann $\xi$ eine eindeutig definierte Zahl ist.''}
\end{lemma}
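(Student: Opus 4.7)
The approach follows Schmidt's original argument as streamlined by Pollington. Writing $P_n = \prod_{k=K+1}^\infty |\cos(\pi r^n \ell / s^k)|$, each factor lies in $[0,1]$, so $P_n\le 1$ and the trivial bound yields $\sum_{n<N}P_n\le N$; the task is to extract a polynomial saving $N^c$. The plan is to reformulate the product as a Weyl sum via $|\cos(\pi x)|^2 = (1+\cos(2\pi x))/2$, expand as a Riesz-style sum, bound the non-trivial frequencies using the multiplicative independence of $r$ and $s$, and conclude with Cauchy--Schwarz.

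First I would truncate the infinite product at $k^*(n)\approx\log_s(r^n\ell)$, beyond which the tail is controlled by a convergent geometric estimate using $|\cos(\pi x)|\ge 1-\pi^2 x^2/2$ for $|x|$ small; this absorbs the tail into an absolute constant. For the remaining finite product, the $|\cos|^2$ identity gives
\[
P_n^2 \;\le\; C\cdot 2^{-(k^*(n)-K)} \prod_{k=K+1}^{k^*(n)}\bigl(1+\cos(2\pi r^n\ell/s^k)\bigr),
\]
whose right-hand side expands as a sum over subsets $E\subseteq\{K+1,\dots,k^*(n)\}$. Each summand is a linear combination of exponentials $e(tr^n)$ at frequencies $t=\ell\sum_{k\in E}\pm s^{-k}$; because $\ell\ge s^K$, the coefficient of $s^{-\min E}$ in any such $t$ cannot be cancelled by the other terms, so $t\ne 0$ whenever $E\ne\emptyset$.

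Summing over $n<N$, the $E=\emptyset$ contribution is a geometric series $\sum_n 2^{-(k^*(n)-K)}=O(1)$. The non-empty subsets require a uniform bound $|\sum_{n<N}e(tr^n)|\le N^{1-c'}$ for every frequency $t\ne 0$ of the shape above, with $c'>0$ computable from $r,s$. Assembling the Riesz expansion with this Weyl estimate bounds $\sum_n P_n^2$ by $O(N^{1-2c})$, and Cauchy--Schwarz then yields $\sum_n P_n\le \sqrt{N\sum_n P_n^2}\le 2N^{1-c}$.

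The main obstacle is exactly the effective Weyl bound invoked in the previous paragraph, and this is where multiplicative independence truly enters. One proves it through an iterated van der Corput inequality applied to the sequence $(r^n)$ tested against $s$-adic rationals; the quantitative strength, and with it the value of $c'$, is governed by a concrete irrationality measure for $\log r/\log s$, which can be tracked from $r$ and $s$ by bounding how finely the intermediate base-$s$ denominators $s^{-\min E}$ can coincide with powers of $r$. This is the delicate bookkeeping Schmidt refers to in the footnoted passage, and once $c'$ is obtained effectively, $c=c'/2$ satisfies every condition in the statement, including the bound $c<1/2$ and computability from $r,s$.
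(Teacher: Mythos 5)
The paper contains no proof of Lemma~\ref{3.15}: it is quoted as Hilfssatz~5 of \cite{Sch61}, and the footnote merely records Schmidt's separate remark that the constant can be made explicit. (The sentence ``our proof adapts the version given in \cite{Pol81}'' just above refers to Lemma~\ref{3.17}, which invokes Lemma~\ref{3.15} as a black box.) So there is no in-house argument for your sketch to be compared against.

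On its own merits the sketch has a genuine gap at the step you yourself flag as the ``main obstacle.'' After the Riesz-product expansion you need, uniformly over all nonzero frequencies $t=\ell\sum_{k\in E}\pm s^{-k}$, a power-saving bound $\bigl|\sum_{n<N}e(tr^n)\bigr|\le N^{1-c'}$ with $c'$ computable from $r,s$, and you propose to obtain it by iterated van der Corput. This does not work: the phase $tr^n$ is lacunary, so every finite difference $tr^{n}(r^h-1)$ produced by van der Corput's process~A is again a geometrically growing phase, and process~B requires derivative control that $tr^n$ does not have; iteration gains nothing. Indeed $\sum_{n<N}e(tr^n)$ admits no uniform power saving over real $t$ --- it fails for every $t$ not normal to base $r$ --- so the special $s$-adic shape of $t$ must carry the entire load. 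For such $t$ the sequence $\{tr^n\}$ is eventually periodic, with period controlled by the multiplicative order of $r$ modulo the $s$-power denominator, and the saving you want amounts to a quantitative lower bound on that order. That is the arithmetic heart of Schmidt's Hilfssatz (handled there by a combinatorial count of residues of $r^n\ell$ modulo powers of $s$), and the sketch presupposes it rather than proving it; gesturing at an ``irrationality measure for $\log r/\log s$'' does not by itself produce the exponential-sum estimate. Until that bound is established by means other than van der Corput, Cauchy--Schwarz has nothing to act on and the argument does not close.
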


\begin{definition}\label{3.16}
$ \displaystyle{ A(\xi,R,T,a,\ell)=\sum_{t\in T}\;\sum_{r\in R}\;
  \Bigl|\sum_{j=\base{a}{r}+1}^{\base{a+\ell}{r}} e(r^j t \xi)\Bigr|^2.}$
\end{definition}

\begin{lemma}\label{3.17}
  Let $R$ be a finite set of bases, $T$ be a finite set of non-zero integers and $a$ be a
  non-negative integer.  Let $s$ be a base multiplicatively independent to the elements of $R$ and
  let $c(R,s)$ be the minimum of the constants $c$ in Lemma~\ref{3.15} for pairs $r,s$ with $r\in
  R$.  Let $\stilde$ be $s-1$ if $s$ is odd and be $s-2$ if $s$ is even.  Let $\eta$ be $s$-adic
  with precision $\base{a}{s}$.  For $v\in \digits{\stilde}{N}$ let $\eta_v$ denote the rational
  number $\eta+s^{-\base{a}{s}}\sum_{j=1}^N v_js^{-j}$.  There is a length $\ell_0$ such that for
  all $\ell\geq \ell_0$, there are at least $(1/2) \stilde^{\base{a+\ell}{s}-\base{a}{s}}$ numbers
  $\eta_v$ such that $A(\eta_v,R,T,a,\ell)\leq \ell\,^{2-c(R,s)/4}$.  Furthermore, $\ell_0$ is a
  computable function of $R$, $T$ and $s$.
\end{lemma}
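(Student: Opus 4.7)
The plan is to estimate $\sum_v A(\eta_v,R,T,a,\ell)$ where $v$ ranges over $\digits{\stilde}{N}$ with $N=\base{a+\ell}{s}-\base{a}{s}$, and then to obtain the conclusion by a Markov-type averaging: if the total is at most $C(R,T,s)\stilde^N\ell^{2-c(R,s)}$, then at least $\stilde^N/2$ indices $v$ satisfy $A(\eta_v,R,T,a,\ell)\leq 2C\ell^{2-c(R,s)}$, and this is dominated by $\ell^{2-c(R,s)/4}$ once $\ell$ exceeds a threshold that is computable from $R$, $T$, $s$.

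For the total, I open the modulus squared in Definition~\ref{3.16} and write $A(\eta_v,R,T,a,\ell)=\sum_{t\in T}\sum_{r\in R}\sum_{j,j'}e((r^j-r^{j'})t\eta_v)$, with $j,j'$ between $\base{a}{r}+1$ and $\base{a+\ell}{r}$. The diagonal $j=j'$ contributes exactly $\stilde^N\cdot\card T\cdot\sum_{r\in R}(\base{a+\ell}{r}-\base{a}{r})$, which is $O(\stilde^N \ell)$. For the off-diagonal part, since $\eta_v=\eta+s^{-\base{a}{s}}\sum_{i=1}^N v_i s^{-i}$, the exponential factorizes across the digits and
\[
\sum_{v\in\digits{\stilde}{N}} e((r^j-r^{j'})t\eta_v)
= e((r^j-r^{j'})t\eta)\prod_{i=1}^{N}\sum_{u=0}^{\stilde-1} e\bigl((r^j-r^{j'})t\,u\,s^{-\base{a}{s}-i}\bigr).
\]
Each one-digit factor has absolute value $|\sin(\pi\stilde\beta_i)/\sin(\pi\beta_i)|$ with $\beta_i=(r^j-r^{j'})ts^{-\base{a}{s}-i}$; because $\stilde$ is even, the standard Polya identity $\sin(2m\pi\beta)=2\sin(\pi\beta)\prod_{k=1}^{m-1}\cdots$ (equivalently, repeated use of $\sin(2x)=2\sin(x)\cos(x)$ combined with the trivial bound $|\sin(\pi\stilde\beta)/\sin(\pi\stilde\beta/2)|\le 2$) rewrites this factor as a bounded constant times a product of terms $|\cos(\pi(r^j-r^{j'})t/s^{k})|$ indexed by $k>\base{a}{s}$, which is exactly the form needed for Lemma~\ref{3.15}.

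With that bookkeeping in place, the off-diagonal sum over $j,j'$ (fixing $t\in T$, $r\in R$, and parametrizing by $n=\min(j,j')$ and $d=|j-j'|$) reduces for each $d$ to a sum of the shape $\sum_n\prod_k|\cos(\pi r^n (r^d-1)t/s^k)|$. Applying Lemma~\ref{3.15} with $K$ chosen so that $s^K\leq(r^d-1)t< s^{K+1}$ gives the bound $2(\base{a+\ell}{r})^{1-c(r,s)}$, uniformly over the allowed $d$. Summing over $t\in T$, $r\in R$, and the $O(\ell)$ values of $d$, the off-diagonal contribution is bounded by $O(\stilde^N\ell\cdot\ell^{1-c(R,s)})=O(\stilde^N\ell^{2-c(R,s)})$ with the implicit constant depending only on $R$, $T$, $s$ and hence computable. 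Adding the diagonal term, $\sum_v A(\eta_v,R,T,a,\ell)\le C(R,T,s)\stilde^N\ell^{2-c(R,s)}$, so Markov yields the required lower bound on the number of good $\eta_v$, and $\ell_0$ is obtained explicitly by solving $2C(R,T,s)\le \ell^{3c(R,s)/4}$.

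The main obstacle is the trigonometric bookkeeping just described: one must match the argument $(r^j-r^{j'})t/s^k$ that naturally appears from the $s$-adic digit sum to Schmidt's template $r^n L/s^k$, which requires factoring $r^j-r^{j'}=r^{\min(j,j')}(r^{|j-j'|}-1)$ and treating separately the extremal pairs $(j,j')$ where $|r^j-r^{j'}|t$ is either too small (so Schmidt's hypothesis $L\geq s^K$ forces one to bound only a tail of the cosine product) or too large (where one must absorb the $s$-adic denominator shift into a change of $K$). Once the correct $K$ is chosen for each $(r,t,d)$, the uniformity in $\eta$ and $a$ is automatic, since all the estimates depend only on differences $\base{a+\ell}{r}-\base{a}{r}$ and on $s^{-\base{a}{s}}$-scaled quantities that cancel in the trigonometric product.
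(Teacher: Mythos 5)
Your overall strategy is the same as the paper's: estimate the mean of $A(\eta_v,R,T,a,\ell)$ over $v\in\digits{\stilde}{N}$ by expanding the modulus squared, factoring the sum over $v$ into a product over digit positions, converting each factor into cosines via double-angle identities (using that $\stilde$ is even), applying Lemma~\ref{3.15}, and then deducing the lemma by a Markov/Chebyshev cut and a large-$\ell$ threshold. The issue is in the middle.

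The claim that ``applying Lemma~\ref{3.15} with $K$ chosen so that $s^K\leq(r^d-1)t< s^{K+1}$ gives the bound $2(\base{a+\ell}{r})^{1-c(r,s)}$, uniformly over the allowed $d$'' is false as stated, and this is not mere bookkeeping.  After absorbing the $r^{\min(j,j')}$ factor, Schmidt's parameter is $L = r^{\min(j,j')}(r^d-1)t$, and the product one must control runs over $k\in(\base{a}{s},\base{b}{s}]$. To extend that finite product to the infinite one required by Lemma~\ref{3.15}, one divides by the tail $\prod_{k>\base{b}{s}}|\cos(\pi Lr^g/s^k)|$, and for that tail to be bounded away from $0$ one needs $Lr^gs^{-\base{b}{s}}$ to be small (say $\leq 1/2$). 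This fails when $\min(j,j')$ (equivalently $g$) is within a bounded distance of $\base{b}{r}$. Separately, to start the infinite product at $k=\base{a}{s}+1$ one needs $L\geq s^{\base{a}{s}}$; since $r^{\base{a}{r}}\asymp e^a\asymp s^{\base{a}{s}}$, this requires $r^d-1$ to be at least of order $s$, which fails for small $d$ when $r<s$. Your $K$ is chosen from $(r^d-1)t$ alone, decoupled from $a$, so the product range you control by Lemma~\ref{3.15} does not match the product range in the factorization.

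The paper resolves both problems at once by fixing an integer $p$ depending only on $R,T,s$ with $r^{p-1}\geq 2|t|$ (for all $t\in T$) and $r^p\geq s^2+1$ (for all $r\in R$), and then splitting $A$ into $B+C$: the sum $B$ collects all pairs $(g,j)$ with $|g-j|<p$ or $\max(g,j)>\base{b}{r}-p$ and is bounded trivially (each term has modulus $\leq 1$, and there are $O(p\ell)$ such pairs, giving $|B|\leq\card T\,\card R\,(\base{b}{s}-\base{a}{s})^{3/2}$ once $\ell\geq(8p\log s)^2$); the sum $C$ contains only pairs with $d\geq p$ and $g,j\leq\base{b}{r}-p$, and for these both $L\geq s^{\base{a}{s}+1}$ and $Lr^gs^{-\base{b}{s}}\leq 1/2$ hold, so Lemma~\ref{3.15} applies with $K=\base{a}{s}$. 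Your sketch places only the diagonal $d=0$ in the easy bucket, which is not enough: without excising the small-$d$ and near-boundary pairs, the main step does not go through. You gesture at this in your final paragraph as ``the main obstacle,'' but that obstacle is exactly the content of the lemma's proof that is missing from your argument, and your in-line ``uniformly over the allowed $d$'' directly contradicts it.

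Once the $B/C$ split is supplied, the rest of your sketch (diagonal contributes $O(\stilde^N\ell)$, $C$ contributes $O(\stilde^N\ell^{2-c})$, Markov at a cut between $\ell^{2-c}$ and $\ell^{2-c/4}$, and $\ell_0$ computable from $R,T,s$ via the constant and $c(R,s)$) matches the paper.
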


\begin{proof}
  We abbreviate $A(x,R,T,a,\ell)$ by $A(x)$, abbreviate $(a+\ell)$ by $b$ 
 and $\digits{\stilde}{\base{b}{s}-\base{a}{s}}$ by~$\L$.  
To provide the needed $\ell_0$ we will estimate  the mean value of $A(x)$ on the set of  numbers~$\eta_v$.
We need an upper bound for  
\[
    \sum_{v\in \L}A(\eta_v)=\sum_{v\in \L}\;\sum_{t\in T}\;\sum_{r\in R}\;
    \Bigl|\sum_{j=\base{a}{r}+1}^{\base{b}{r}} e(r^j t \eta_v)\,\Bigr|^2
=\sum_{v\in \L}\;\sum_{t\in T}\;\sum_{r\in R}\;
    \sum_{g=\base{a}{r}+1}^{\base{b}{r}}
    \sum_{j=\base{a}{r}+1}^{\base{b}{r}}
    e((r^j-r^g)t\eta_v).
\]
Our main tool is Lemma~\ref{3.15}, but it does not apply to all the terms  $A(x)$ in the sum.
So we will split it  into two smaller sums over $B(x)$ and $C(x)$, 
so that  a straightforward analysis applies to the first, and Lemma~\ref{3.15} applies to the other.
Let $p$ be the least integer satisfying the conditions for each $t\in T$, $r^{p-1} \geq 2|t|$
  and for each $r\in R$, $r^p\geq s^2+1$.
{\everymath={\displaystyle}
\begin{align*}
  B(x)=\sum_{t\in T}\sum_{r\in R}
  \left(
    \begin{array}{ll}
\sum_{g=\base{b}{r}-p+1}^{\base{b}{r}}
\sum_{j=\base{a}{r}+1}^{\base{b}{r}}
e((r^j-r^g)t x)&+\\
\sum_{g=\base{a}{r}+1}^{\base{b}{r}}
\sum_{j=\base{b}{r}-p+1}^{\base{b}{r}}
e((r^j-r^g)t x)&+\\
\sum_{g=\base{a}{r}+1}^{\base{b}{r}}
\sum_{\substack{j=\base{a}{r}+1\\ |g-j|<p}}^{\base{b}{r}}
e((r^j-r^g)t x).
    \end{array}
\right)
\end{align*}
}
Assume  for each $r\in R$, $\ell\geq \log r$ and $\ell\geq (8p\log s)^2$
 (and recall, $b=a+\ell$.) 
We obtain the following bounds. 
The first inequality uses that each term in the explicit definition of $B(x)$ has norm less than or equal to $1$.
The second uses the assumed conditions on $\ell$ and  the last inequality uses that $c(R,s)< 1/2$ as ensured by  Lemma~\ref{3.15}.
\begin{align}
  |B(x)|&\leq \sum_{t\in T} \sum_{r\in R}4p(\base{b}{r}-\base{a}{r}) 
  = {\card}T\, {\card}R \;4\, p\, (2\log s/\log r) (\base{b}{s}-\base{a}{s})\nonumber\\
  &\leq {\card}T\,{\card}R\; (\base{b}{s}-\base{a}{s})^{3/2} \nonumber\\
  &\leq {\card}T\,{\card}R\; (\base{b}{s}-\base{a}{s})^{2-c(R,s)/2}.\nonumber
\end{align}
Thus,
$\displaystyle{\sum_{v\in \L}B(\eta_v)\leq {\card}T\,{\card}R\; (b-a)^{2-c(R,s)/2}\,\stilde^{\base{b}{s}-\base{a}{s}}}$.
We estimate $\sum_{v\in \L}C(\eta_v)$, where
\[
C(x) =\sum_{t\in T}\sum_{r\in R}
\sum_{g=\base{a}{r}+1}^{\base{b}{r}-p}\;\;
\sum_{\substack{j=\base{a}{r}+1\\
    |j-g|\geq p}}^{\base{b}{r}-p}
e((r^j-r^g)t x).
\]
We will rewrite $C(x)$ conveniently. We start by rewriting $\sum_{v\in \L}A(\eta_v)$.
  \begin{align*}
    \sum_{v\in \L}A(\eta_v) 
%&=\sum_{t\in T}\;\sum_{r\in R}\;\sum_{v\in \L}\;    \Bigl|\sum_{j=\base{a}{r}+1}^{\base{b}{r}} e(r^j t \eta_v)\,\Bigr|^2\\
   =&\sum_{t\in T}\;\sum_{r\in R}\;\sum_{v\in \L}\;
    \sum_{g=\base{a}{r}+1}^{\base{b}{r}}
    \sum_{j=\base{a}{r}+1}^{\base{b}{r}}
    e((r^j-r^g)t\eta_v)
  \\  =&
    \sum_{t\in T}\;\sum_{r\in R}\;
    \sum_{j=\base{a}{r}+1}^{\base{b}{r}}
    \sum_{g=\base{a}{r}+1}^{\base{b}{r}}\;
    \sum_{v\in \L}\;
    e((r^j-r^g)t\eta_v).
  \end{align*}
For fixed $j$ and $g$, we have the following identity.
\[
\sum_{v\in \L}\;\;
e((r^j-r^g)t\eta_v)=
\prod_{k=\base{a}{r}+1}^{\base{b}{r}}\Bigl(1+e\Bigl(\frac{t(r^j-r^g)}{s^k}\Bigl)+\dots+e\Bigl(\frac{(\stilde-1) t(r^j-r^g)}{s^k}\Bigl) \Bigl).
\]
Since $v\in\L= \digits{\stilde}{\base{b}{s}-\base{a}{s}}$
the digits in $v$ are in $\{0,\dots,\stilde-1\}$. Thus,
\[
\Bigl|\sum_{v\in \L} A(\eta_v)\Bigr|\leq
\sum_{t\in T}\sum_{r\in R}
\sum_{j=\base{a}{r}+1}^{\base{b}{r}}\;\;
\sum_{g=\base{a}{r}+1}^{\base{b}{r}}\;\;
\prod_{k=\base{a}{r}+1}^{\base{b}{r}}\;\Bigl|\;
\sum_{d=0}^{\stilde-1} e\Bigl(\frac{d t(r^j-r^g)}{s^k}\Bigr)
\;\Bigr|
\]
and
\[
\Bigl|\sum_{v\in \L} C(\eta_v)\Bigr|\leq\sum_{t\in T}\sum_{r\in R}
\sum_{j=\base{a}{r}+1}^{\base{b}{r}-p}
\sum_{\substack{g=\base{a}{r}+1\\
    |j-g|\geq p}}^{\base{b}{r}-p }\;\;
\prod_{k=\base{a}{r}+1}^{\base{b}{r}}\;
\Bigl|\;
\sum_{d=0}^{\stilde-1} e\Bigl(\frac{d t(r^j-r^g)}{s^k}\Bigr)
\;\Bigr|.
\]
\noindent Since $|\sum_x e(x)|=|\sum_x e(-x)|$, we can bound the sums over $g$ and $j$ as
follows.
\begin{equation}
  \nonumber
\Bigl|\sum_{v\in \L} C(\eta_v)\Bigr|\leq
2
\sum_{t\in T}\sum_{r\in R}
\sum_{j=p}^{\base{b}{r}-\base{a}{r}-p}\;\;
\sum_{g=1}^{\base{b}{r}-\base{a}{r}-p-j}\;\;
\prod_{k=\base{a}{r}+1}^{\base{b}{r}}\;
\Bigl|\;
\sum_{d=0}^{\stilde-1} e\Bigl(\frac{d tr^{\base{a}{r}}r^g(r^{j}-1)}{s^k}\Bigr)
\;\Bigr|.
\end{equation}
Let $L=(r^j-1) r^{\base{a}{r}}t$.  The following bounds related to $L$ are ensured by the
choice of $p$.  Let $\Tmax$ be the maximum of the absolute values of the elements of $T$.
\begin{align*}
  Lr^gs^{-\base{b}{s}}&\leq   (r^j -1)r^{\base{a}{r}} t r^g s^{-\base{b}{s}}\\
  &\leq r^j r^{\base{a}{r}} t r^{\base{b}{r} - \base{a}{r} - p - j}  s^{-\base{b}{s}} =
  t r^{\base{b}{r}-p} s^{-\base{b}{s}}\\
  &\leq \Tmax \; r^{\ceil{b/\log r}} \; s^{-\ceil{b/\log s}} r^{-p}\\
  &\leq \Tmax \; r^{1-p}\\
  &\leq 1/2 \qquad\mbox{(an ensured condition on $p$)}.
\end{align*}
We give a lower bound on the absolute value of $L$.
\begin{align*}
|L|&\geq (r^p-1)r^{\base{a}{r}} = (r^p-1)r^{\ceil{a/\log r}}\\
  &\geq 
(r^p-1) s^{a/\log s}\\
  &\geq s^{2+a/\log s} \qquad\mbox{(an ensured condition on $p$)}\\
\phantom{ Lr^gs^{-\base{b}{s}}} 
  &\geq s^{\base{a}{s} +1}.
\end{align*}
Below, we use 
$\Bigl|\sum_{d=0}^{\stilde-1} e({dx}) \Big|\leq (\stilde/2) \ |1+e(x)|$; notice that the leading coefficient is whole
(note to the curious reader: this  the only reason that $\stilde$ is required to be even).
\begin{align*}
\sum_{g=1}^{\base{b}{r}-\base{a}{r}-p-j}
\prod_{k=\base{a}{r}+1}^{\base{b}{r}}\;
\Bigl|\;
\sum_{d=0}^{\stilde-1} e(d L r^gs^{-k})
\;\Bigr|
&\leq
\sum_{g=1}^{\base{b}{r}-\base{a}{r}-p-j}
\prod_{k=\base{a}{r}+1}^{\base{b}{r}}\;
\frac{\stilde}{2}\, \Bigl|1+e\Bigl(r^g L s^{-k}\Bigr) \Bigr|
\end{align*}
which, by the double angle identities, is at most
$\displaystyle{
\stilde^{\base{b}{s} - \base{a}{s}}
\sum_{g=1}^{\base{b}{r}-\base{a}{r}-p-j}
\prod_{k=\base{a}{r}+1}^{\base{b}{r}}\;
|\cos(\pi L r^g s^{-k})|.}$
\newline
If  $k\geq \base{b}{r}$, then $L r^g s^{-k}\leq 2^{-(k+1)}$.  Therefore,
$\displaystyle{
\prod_{k=\base{b}{r}+1}^\infty |\cos(\pi L r^g s^{-k})|\geq \prod_{k=1}^\infty |\cos(\pi 2^{-(k+1)})|}$,
where the right hand side is a positive constant. Then,
\begin{align*}
\prod_{k=\base{a}{r}+1}^{\base{b}{r}} |\cos(\pi L r^g s^{-k})|
&=
\prod_{k=\base{a}{r}+1}^\infty |\cos(\pi L r^g s^{-k})|
\;\;\left(\prod_{k=\base{b}{r}+1}^\infty |\cos(\pi L r^g s^{-k})|\right)^{-1}
\end{align*}
which, for the appropriate constant $\constant$, is at most
$\displaystyle{   \constant \;\prod_{k=\base{a}{r}+1}^\infty |\cos(\pi L r^g s^{-k})|}$.
\newline
We can apply Lemma~\ref{3.15}:
\begin{align*}
\sum_{g=1}^{\base{b}{r}-\base{a}{r}-p-j}
\prod_{k=\base{a}{r}+1}^{\base{b}{r}}\;
\Bigl|\;
\sum_{d=0}^{\stilde-1} e(d L r^gs^{-k})
\;\Bigr|
&\leq
\sum_{g=1}^{\base{b}{r}-\base{a}{r}-p-j}
\constant \;\prod_{k=\base{a}{r}+1}^\infty |\cos(\pi L r^g s^{-k})|\\
&\leq 2\constant({\base{b}{r}-\base{a}{r}})^{1-c(R,s)}.
\end{align*}
\begin{align*}
\Bigl|\sum_{v\in \L} C(\eta_v)\Bigr|&\leq
2
\sum_{t\in T}\sum_{r\in R}
\sum_{j=p}^{\base{b}{r}-\base{a}{r}-p}\;\;
\stilde^{\base{b}{s}-\base{a}{s}}\; 2\constant ({\base{b}{s}-\base{a}{s}})^{1-c(R,s)}\\
&\leq
4\constant\; {\card} T\;{\card} R\;({\base{b}{s}-\base{a}{s}})^{2-c(R,s)}\; \stilde^{\base{b}{s}-\base{a}{s}}.
\end{align*}
Combining this with the estimate for $|\sum_{v\in\L} B(\eta_v)|$, we have
\[
|\sum_{v\in\L} A(\eta_v)|\leq 4\constant\;  {\card}T {\card}R
(\base{b}{s}-\base{a}{s})^{2-c(R,s)}\stilde^{\base{b}{s}-\base{a}{s}}. 
\]
Therefore, the number of $v\in\L$ such that
$\displaystyle{A(\eta_v)> 4\constant\;  {\card}T\; {\card}R (\base{b}{s}-\base{a}{s})^{-c(R,s)/2}}$
is at most
$(\base{b}{s}-\base{a}{s})^{-c(R,s)/2}\; \stilde^{\base{b}{s}-\base{a}{s}}.$
If $\ell>(2^{2/c(R,s)}+1)\log s$ and $\ell>(16\constant\card{T}\card{R})^{4/c(R,s)}$ then
$(\base{b}{s}-\base{a}{s})^{-c(R,s)/2}<1/2$. So, there are at least 
$(1/2)\stilde^{(\base{b}{s}-\base{a}{s})}$ members $v\in\L$ for which
\begin{align*}
  A(\eta_v)&\leq 4\constant\;  {\card}T\, {\card}R (\base{b}{s}-\base{a}{s})^{2-c(R,s)/2}\\
  &\leq 4\constant\;  {\card}T\, {\card}R (2\ell)^{2-c(R,s)/2}\\
 % &\leq 16\constant\;  {\card}T\, {\card}R\; \ell\,^{2-c(R,s)/2}\\
&\leq \ell\,^{2-c(R,s)/4}.
\end{align*}
This proves the lemma for $\ell_0$ equal to the least integer greater than  
$(2^{2/c(R,s)}+1)\log s$,
$(16\constant\card{T}\card{R})^{4/c(R,s)}$, 
$(8p\log s)^2$ and 
$\max\{\log r : r\in R\}$.  
\end{proof}

\subsection{On Changing Bases}

\begin{lemma}\label{3.18}
  For any interval $I$ and base $s$, there is a $s$-adic subinterval $I_s$ such that
  $\measure{I_s}\geq \measure{I}/(2s)$ .
\end{lemma}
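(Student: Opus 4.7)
The plan is to find the right scale at which to place an $s$-adic subinterval inside $I$. Set $L=\measure{I}$ (assuming $L>0$; otherwise the statement is vacuous) and let $k$ be the smallest positive integer such that $s^{-k}\leq L/2$. Such a $k$ exists because $s^{-k}\to 0$. I then want to show that an $s$-adic interval of precision $k$ fits inside $I$ and that $s^{-k}\geq L/(2s)$.

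For the first part, note that the collection of intervals $[a/s^k,(a+1)/s^k)$ with $0\leq a<s^k$ partitions $[0,1)$ into pieces of length $s^{-k}$. Writing $I$ as containing some left endpoint $x$, the smallest multiple of $s^{-k}$ that is at least $x$ lies in $[x,x+s^{-k}]$; since $I$ has length at least $2s^{-k}$, the next $s$-adic point $(a+1)/s^k$ still lies in $I$, so $[a/s^k,(a+1)/s^k)\subseteq I$. This produces the desired $s$-adic subinterval $I_s$.

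For the second part, the minimality of $k$ forces $s^{-(k-1)}>L/2$, i.e.\ $s^{-k}>L/(2s)$. Therefore $\measure{I_s}=s^{-k}>\measure{I}/(2s)$, as required. The only mild issue is the boundary case where $I$ touches $1$, but this is handled by taking the interval $[a/s^k,(a+1)/s^k)$ in the half-open sense, or by replacing $I$ with a semi-open subinterval of the same length. No step is really hard; the only thing to get right is choosing $k$ so that the two competing requirements (fitting inside $I$, and not being too small) both succeed, which the factor of $2$ in the definition of $k$ accomplishes.
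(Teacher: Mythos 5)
Your proof is correct and follows a genuinely cleaner route than the paper's. You choose $k$ least with $s^{-k}\leq \measure{I}/2$, so that $\measure{I}\geq 2s^{-k}$; this single inequality automatically guarantees that a full $s$-adic interval of length $s^{-k}$ fits inside $I$, and minimality of $k$ (together with $\measure{I}\leq 1$ when $k=1$) gives $s^{-k}>\measure{I}/(2s)$. No case analysis is needed. The paper instead sets $m$ least with $s^{-m}<\measure{I}$ and then splits: if an $s$-adic interval of length $s^{-m}$ already fits strictly inside $I$, it records the stronger bound $\measure{I_s}\geq\measure{I}/s$; otherwise it locates a lattice point $a/s^m$ in $I$ whose neighbors both lie outside, deduces $\measure{I}<2s^{-m}$, and shows one of the two children of $[a/s^m,(a+1)/s^m)$ at scale $m+1$ must lie in $I$. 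In effect your $k$ is the paper's $m$ or $m+1$ depending on which case the paper would land in, and you trade the occasional better constant $1/s$ for a uniform, one-shot argument. The only soft spot in your write-up is the handling of an interval that is open at its left endpoint: ``replacing $I$ with a semi-open subinterval of the same length'' isn't literally possible, but the intended fix is trivial (if $a/s^k$ hits the left endpoint exactly, shift to $[(a+1)/s^k,(a+2)/s^k)$, which still fits since $\measure{I}\geq 2s^{-k}$ and the $s$-adic interval is half-open on the right). With that wording tightened, the argument is complete.
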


\begin{proof}
  Let $m$ be least such that $1/s^m<\measure{I}$.
  Note that $1/{s^m}\geq \measure{I}/ {s}$, since $1/s^{m-1}\geq\measure{I}$.  
  If there is a $s$-adic interval of length $1/ {s^m}$ strictly contained in $I$, 
  then let $I_s$ be such an interval, and note that $I_s$ has
  length greater than or equal to $\measure{I}/{s}$.  
  Otherwise, there must be an $a$ such that
  $a/s^m$ is in $I$ and neither $(a-1)/s^m$ nor $(a+1)/s^m$ belongs to $I$.  
  Thus, $2/s^m>\measure{I}$.  
  However, since $1/s^m < \measure{I}$ and $s\geq 2$ then 
  $2/s^{m+1}<\measure{I}$.  
   So, at least one of the two intervals
  $\displaystyle{\left[\frac{sa-1}{s^{m+1}}, \frac{sa}{s^{m+1} }\right)}$ or
  $\displaystyle{\left[\frac{sa}{s^{m+1} }, \frac{sa+1}{s^{m+1} }\right)}$ must be contained in $I$.
  Let $I_s$ be such. 
  Then, $\measure{I_s}$ is $\displaystyle{\frac{1}{s^{m+1}}=\frac{1}{2s}\frac{2}{s^m}\ >\  \measure{I}/(2s).}$ 
  In either case, the length of $I_s$ is greater than $\measure{I}/(2s)$.
\end{proof}

\begin{lemma}\label{3.19}
  Let $s_0$ and $s_1$ be bases and suppose that $I$ is an $s_0$-adic interval of length
 $s_0^{-\<b;s_0\>}$.  For $a=b+\ceil{\log s_0 + 3\log s_1}$, there is an $s_1$-adic subinterval of
 $I$ of length $s_1^{-\<a;s_1\>}$.  
 \end{lemma}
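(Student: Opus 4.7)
The plan is to directly compare lengths and show that the interval $I$ is wide enough to contain an $s_1$-adic block at the required precision, using the key fact that any real interval of length at least $2/s_1^k$ contains an $s_1$-adic interval of length $1/s_1^k$.

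First I would bound the length of $I$ from below. Since $\langle b;s_0\rangle=\lceil b/\log s_0\rceil\leq b/\log s_0+1$, we have
\[
\mu(I)=s_0^{-\langle b;s_0\rangle}\geq s_0^{-b/\log s_0-1}=\frac{e^{-b}}{s_0}.
\]
Next I would bound the target length from above. Using $\langle a;s_1\rangle=\lceil a/\log s_1\rceil\geq a/\log s_1$, we have
\[
s_1^{-\langle a;s_1\rangle}\leq s_1^{-a/\log s_1}=e^{-a}.
\]

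Now the elementary geometric observation: if $\mu(I)\geq 2\cdot s_1^{-k}$ for $k=\langle a;s_1\rangle$, then taking $j=\lceil x s_1^k\rceil$ where $x$ is the left endpoint of $I$ yields $j/s_1^k\in I$ and $(j+1)/s_1^k\leq x+1/s_1^k+1/s_1^k\leq x+\mu(I)$, so $[j/s_1^k,(j+1)/s_1^k)\subseteq I$ is an $s_1$-adic subinterval of exactly the required length. (Alternatively, one could invoke Lemma~\ref{3.18} and then subdivide, but the direct argument is shorter.)

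It thus suffices to verify $e^{-b}/s_0\geq 2\,e^{-a}$, equivalently $a-b\geq\log(2s_0)$. Since $a-b=\lceil\log s_0+3\log s_1\rceil$ and $s_1\geq 2$ gives $3\log s_1\geq 3\log 2>\log 2$, we obtain
\[
a-b\geq \log s_0+3\log s_1\geq \log s_0+\log 2=\log(2s_0),
\]
completing the argument. The whole proof is really just bookkeeping with the ceiling operator, so there is no substantial obstacle; the only thing to be careful about is not losing a factor when passing between $s_0^{-\langle b;s_0\rangle}$ and $e^{-b}$, which is what motivates the $\log s_0$ term in the definition of $a$, while the $3\log s_1$ term gives a comfortable margin (more than is strictly needed for this lemma) that presumably is convenient in its later applications.
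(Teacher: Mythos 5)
Your proof is correct and takes a somewhat different route from the paper's. The paper first invokes Lemma~\ref{3.18} to get an $s_1$-adic subinterval of $I$ of length $s_1^{-(\lceil -\log_{s_1}\mu(I)\rceil+1)}$, then does careful ceiling bookkeeping to show that this exponent is at most $\base{a}{s_1}$. You instead bypass Lemma~\ref{3.18} and prove inline the cleaner sufficient criterion that an interval of length at least $2 s_1^{-k}$ contains an $s_1$-adic interval of length $s_1^{-k}$, and then translate both $\mu(I)$ and the target length into natural-exponential form, so that the ceilings collapse into a single comparison $a-b\geq\log(2s_0)$. This is arguably tidier: you get a single clean inequality, whereas the paper stacks two ceilings and checks a chain of estimates. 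Your observation that the $3\log s_1$ term is more than strictly needed is also accurate (you use only $\log 2$ of it); the extra slack is indeed used elsewhere, e.g.\ Remark~\ref{3.20} and the stage bookkeeping in the theorem proofs, so it is packaged into $a$ once and for all. The only thing worth stating a bit more explicitly is that $I$ is half-open $[x,x+\mu(I))$, so the containment $[j/s_1^k,(j+1)/s_1^k)\subseteq I$ goes through because $(j+1)/s_1^k<x+2/s_1^k\leq x+\mu(I)$ is a strict inequality at the right end; as written it reads fine but a careful reader will want to see that this boundary case is handled.
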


\begin{proof}
By the proof of Lemma~\ref{3.18}, there is an $s_1$-adic subinterval of $I$ of length
$s_1^{-(\ceil{-\log_{s_1}(\mu(I))}+1)}$:
\begin{align*}
  \ceil{-\log_{s_1}(\measure{I})}+1&= \ceil{-\log_{s_1}(s_0^{-\base{b}{s_0}})}+1
=\ceil{
    {\base{b}{s_0}\log s_0 }/{\log s_1}
      }+1\\
%  &\leq \bigceil{
%    \frac{\log s_0 (b/(\log s_0) +1)}{\log s_1}
%  }+1\\
  &\leq \ceil{ b/\log s_1+ \log s_0/\log s_1}+1\\
  &\leq \base{b}{s_1} +\ceil{\log s_0/\log s_1} +1.  
\end{align*}

Thus, there is an $s_1$-adic
subinterval of $I$ of length $s_1^{-(\base{b}{s_1} +\ceil{\log s_0/\log s_1} +1)}$.  
Consider
$a=b+\ceil{\log s_0 + 3\log s_1}$.  Then
\begin{align*}
  \base{a}{s_1} &= \ceil{a/\log s_1} = \ceil{{b+\ceil{\log s_0 + 3\log s_1}}/{\log s_1}}\\
 % & \geq \frac{b+\ceil{\log s_0 + 3\log s_1}}{\log s_1}\\
  &\geq b/\log s_1 +(\log s_0+3\log s_1)/\log s_1\\
%  &\geq b/\log s_1 +\log s_0/\log s_1 +3\\
  &\geq \base{b}{s_1} +\ceil{\log s_0/\log s_1} +1.
\end{align*}
This inequality is sufficient to prove the lemma.
\end{proof}

The next observation is by direct substitution. We will use it in the proofs of the theorems.

\begin{remark}\label{3.20}
   Suppose that $r$, $s_0$ and $s_1$ are bases.  Let $b$ be a positive integer and let
   $a=b+\ceil{\log s_0 + 3\log s_1}$.  Then,
   $\base{a}{r}-\base{b}{r}\leq \ceil{\log s_0 + 3\log s_1}/\log r+1.$  Hence,\linebreak
   $\base{a}{r}-\base{b}{r}\leq 2\ceil{\log s_0 + 3\log s_1}.$  
 \end{remark}

\section{Proofs of Theorems}

\subsection{Tools}

\begin{notation}
Let $M$ be the set of minimal representatives of the multiplicative dependence equivalence 
classes.  Let $p(s_0,s_1)=2\ceil{\log s_0+3\log s_1}$.  
\end{notation}

\begin{definition}\label{4.1}
  Let $T$ and $\delta$ be as defined  in Lemma~\ref{3.8} for input $(\epsilon/10)^4$.
Let $\ell$ be the function with inputs $R$, $s$, $k$, $\epsilon$ and value the least integer greater than all  of the following:  
\begin{itemize}
\item The maximum of $\ell_0$ as defined in Lemma~\ref{3.9} over all  inputs $r$ in $R$ 
  and $\epsilon$ as given.  
\item  $N_0$ as defined  in Lemma~\ref{3.14} for  inputs $s$, $k$ and $\epsilon$ 
\item  $\ell_0$ as defined in  Lemma~\ref{3.17} for inputs $R$, $T$ and $s^k$.
\item $\left((\log r)^2/\delta\right)^{4/c(R,s^k)}$ for $c(R,s^k)$ the minimum of the constants of Lemma~\ref{3.15} for pairs $s,r$ with $r\in R$.
\end{itemize}
\end{definition}

\subsection{Proof of Theorem \ref{2}}

\begin{theoremD}
  For any $\Pi^0_3$ subset $R$ of $M$ there is a computable real number $\xi$ such that for all
  $r\in M$, $r\in R$ if and only if $\xi$ is normal to base $r$.  Furthermore, $\xi$ is computable
  uniformly in the $\Pi^0_3$ formula that defines $R$.
\end{theoremD}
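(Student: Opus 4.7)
The plan is to build $\xi$ as the unique point in a nested sequence of intervals $I_0 \supset I_1 \supset \cdots$ obtained by interleaving two kinds of modules. An \emph{enforcing} module refines the current interval to a smaller $s^k$-adic subinterval in which a prescribed finite set $R'$ of bases multiplicatively independent to $s$ all see their discrepancies driven below a target $\epsilon$. This is powered by combining Lemma~\ref{3.17} (which produces, among the candidate extensions in $\digits{\stilde}{N}$, a large majority with small exponential-sum quantity $A(\eta_v,R',T,a,\ell)$) with Lemma~\ref{3.8} (which converts small exponential sums into small discrepancy) and Lemma~\ref{3.14} (which simultaneously insists on small discrepancy in base $s^k$ itself); the length $\ell$ is chosen as in Definition~\ref{4.1} to satisfy all these preconditions at once, and the majorities from the two lemmas still intersect. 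A \emph{denying} module against a base $r$ uses Lemma~\ref{3.19} to refine the current interval to an $r$-adic subinterval and then pins the next $L$ $r$-adic digits of $\xi$ to~$0$; with $L$ chosen much larger than the pre-existing $r$-precision, Lemma~\ref{3.5} applied to $I=[0,1/r)$ gives $D\ge 1/(2r)$ at that prefix length.

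Next, parse $r\in R\iff\forall n\,\exists m\,\forall k\,\phi(r,n,m,k)$ into the chain of $\Sigma^0_2$ predicates $\psi_n(r)\equiv\exists m\,\forall k\,\phi(r,n,m,k)$. Use the least-witness approximation: at stage $s$ let $m^s_{r,n}$ be the least $m\le s$ with $\phi(r,n,m,k)$ for all $k\le s$, or $\infty$ otherwise. If $\psi_n(r)$ holds the witness stabilizes; if not, $m^s_{r,n}$ is $\infty$ cofinally or diverges. Organize these approximations on a standard guessing tree for a $\Pi^0_3$ predicate, so at each finite level $n$ the construction carries a current belief about which of $r_1,\dots,r_n$ satisfy $\psi_1\wedge\cdots\wedge\psi_n$. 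The true path through the tree records the correct status of each pair $(r_i,n)$.

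Fix a computable enumeration $r_1,r_2,\dots$ of $M$ and run stages indexed by a fair enumeration of pairs $(i,n)$. At stage $(i,n)$, consult the current guess at level $n$. If every $\psi_{n'}(r_i)$ for $n'\le n$ is currently believed, run an enforcing module with $R'$ equal to the set of indices $j\le n$ that are believed to satisfy $\psi_1\wedge\cdots\wedge\psi_n$; rotate the working base $s$ so that $s^k$ is multiplicatively independent to $R'$, and choose the parameters $T$, $\delta$, $\ell$, $N$ as dictated by Definition~\ref{4.1}. Otherwise, run a denying module for $r_i$. For $r_i\in R$: each $\psi_n(r_i)$ is eventually correctly affirmed along the true path, so only finitely many denying modules ever target $r_i$; from then on, all stages scheduled for $r_i$ include it in $R'$, and Lemmas~\ref{3.17}, \ref{3.8}, and~\ref{3.4} drive the $r_i$-discrepancy to zero, while Lemma~\ref{3.3} absorbs the short leftover effects of denying modules aimed at other bases. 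For $r_i\notin R$: some $n_0$ witnesses $\neg\psi_{n_0}(r_i)$, so the fair schedule hits $(i,n_0)$ with the guess in the ``deny'' state infinitely often, producing recurrent spikes of size $\ge 1/(2r_i)$ by Lemma~\ref{3.5}. Computability of $\xi$ uniformly in $\phi$ follows because every module is a finite search with computable bounds.

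The hard part will be the bookkeeping that keeps denying modules aimed at bases $r_j\notin R$ from destroying the convergence to zero required for any $r_i\in R$. Each denying-for-$r_j$ module of length $\Theta(N)$ commits to an $r_j$-adic zero block and leaves the $r_i$-expansion uncontrolled, so to preserve the eventual $r_i$-convergence I must arrange the module-length sequence so that each post-denial enforcing module is long enough to dominate the prior discrepancy (via Lemma~\ref{3.3} and Lemma~\ref{3.4}) and simultaneously large enough to meet all the thresholds of Definition~\ref{4.1}. This delicate balance, together with the tree-of-strategies structure that keeps the approximation from flickering forever on a single pair $(r_i,n)$, is where the argument concentrates its real work.
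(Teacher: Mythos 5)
Your proposal captures the right pool of lemmas and the right high-level shape (nested intervals, exponential-sum control via Lemma~\ref{3.17} and Lemma~\ref{3.8}, an approximation scheme for the $\Pi^0_3$ formula), but there is a genuine structural gap in how you separate the enforcing and denying roles, and you are right to flag it as ``the hard part'' --- it is not merely bookkeeping.

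Your denying module pins a block of $L$ consecutive $r_j$-adic digits to a constant value. To produce a discrepancy spike of order $1/r_j$ via Lemma~\ref{3.5} the block must be \emph{long}: $L$ must be comparable to, indeed several times larger than, the entire prefix length at the moment the module starts. But while those $L$ digits are pinned, you exercise no control whatsoever over the exponential sums $A(\cdot,R',T,\cdot,\cdot)$, and hence no control over the discrepancy for any base $r_i\in R$. Lemma~\ref{3.3} cannot absorb this, since it applies only when the appended segment has length at most an $\epsilon$-fraction of what precedes it, whereas your denying block is by design a large multiple of the prefix. The damage to the $r_i$-discrepancy is then of constant order, and ``recovering'' forces a geometric escalation of enforcing lengths, which in turn forces even longer denying blocks on the next pass; you never stated a scheme that closes this loop, and there is no reason offered that the recurring damage to $r_i$ tends to zero rather than to a positive constant.

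The paper's construction sidesteps this entirely by making the enforcing extension and the denying extension the \emph{same} extension. At each stage the new digits are drawn from $\digits{\stilde}{\cdot}$ with $\stilde$ equal to $s^k-1$ or $s^k-2$, i.e., the top digit of $s^k$ is simply omitted. Lemma~\ref{3.17} and Lemma~\ref{3.14} show that within this restricted alphabet a strict majority of the candidates still have small $A$-value and small $s$-discrepancy, so the enforcement of the bases in $R_{m+1}$ (and the controlled $s$-discrepancy) is preserved. Simultaneously, the missing digit means that for the relevant range of $j$ no $\expa{(s^{k})^j\xi}$ lands in the top $s^{k}$-adic cell, so Lemma~\ref{3.5} yields a spike of size exactly $1/(2s^{k})$ in the $s$-discrepancy --- a side effect of the enforcing choice, not a separate module that forfeits control. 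The $\Pi^0_3$ approximation then controls only the \emph{magnitude} of that spike, by raising the exponent $k$: if $\phi(s)$ holds, the approximation pushes $k\to\infty$ and the forced spike $1/(2s^{k})\to 0$, so $s$ ends up normal; if $\phi(s)$ fails, some finite $k$ recurs infinitely often and the recurring spike of size $\geq 1/(4s^{k})$ prevents normality. Because the enforcing and denying extensions coincide, there is never a stretch of digits where the $R$-bases are uncontrolled, and the balancing problem you identify simply does not arise. (As a minor separate slip, Lemma~\ref{3.5} needs the pinned positions to \emph{avoid} the target interval; with digits pinned to $0$ you must take $I$ to be, say, $[1/r,2/r)$ rather than $[0,1/r)$, but this does not affect the main gap.)

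One more contrast worth noting: the paper never uses a tree of strategies. It runs a single thread, re-enumerates $M$ with repetitions, and folds the $\Sigma^0_2$ least-witness approximation into the choice of the integer $x_m$ that sets the spike exponent. A tree is a legitimate alternative for organizing a $\Pi^0_3$ guess, but the single-thread version avoids all initialization and injury considerations, which is part of why the paper's argument is shorter once the unified-module insight is in place.
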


Note that $m\in M$ if and only if there is no $n$ less than $m$ such that $m$ is an integer power of~$n$, 
an arithmetic condition expressed using only bounded quantification.
Let $\phi=\forall x\exists y\forall z\theta$ be a $\Pi^0_3$ formula with one free variable.  
We will construct a real number $\xi$ so that for every base $r$, $\xi$ is normal to base $r$ if and only if
$\phi(r)$ is true.  The normality of $\xi$ to base $r$ is naturally expressed using three
quantifiers: $\forall\epsilon\exists n\forall N\geq n\  D(\expa{r^k\xi}: 0\leq k<N)<\epsilon$.
Lemma \ref{3.1} shows that the discrepancy $D$
 admits computable  approximations (using finite partitions of the unit interval).
Thus, the normality of $\xi$ to base $r$  is a $\Pi^0_3$ formula. 
In our construction, we will bind the quantified variables in $\phi(r)$
to those in the formula for normality.  
The variable $x$ will correspond to $\epsilon$, $y$ to $n$ and $z$ to $N$.

We define a sequence $\xi_m$, $b_m$, $s_m$, $k_m$, $\epsilon_m$, $\ell_m$, $x_m$, $R_m$ and $c_m$ by
stages.  $\xi_m$ is a $s_m^{k_m}$-adic rational number of precision $\base{b_m}{s_m^{k_m}}$.  $b_m$
and $k_m$ are positive integers.  $s_m$ is a base.  $R_m$ is a finite set of bases.  The real $\xi$
will be an element of $[\xi_m,\xi_m+(s_m^{k_m})^{-\base{b_m}{s_m^{k_m}}})$.  Stage $m+1$ is devoted
to extending $\xi_{m}$ so that the discrepancy of the extended part in base $s_{m+1}$ is below $1/x_m$
and above $1/(2s_{m+1}^{k_{m+1}})$, and so that the discrepancy of the extension 
for the other bases under consideration is below $\epsilon_{m+1}$. 
$\ell_{m+1}$ is used to determine the
length of the extension and $c_{m+1}$ is an integer used to monitor $\phi$ and set bounds on
discrepancy.
Fix an enumeration of $M$ such that every element of $M$ appears infinitely often.
\medskip

\noindent\emph{Initial stage.}  Let $\xi_0=0$, $b_0=1$, $s_0=3$, $k_0=1$, $\epsilon_0=1$,
$\ell_0=1$, $x_0=1$ and $c_0=1$.

\medskip\noindent{\em Stage $m+1$.\ } 
Given 
$\xi_{m}$ of the form $\sum_{j=1}^{\base{b_{m}}{s_m^{k_{m}}}} v_j(s_m^{k_{m}})^{-j}$,
$b_{m}$, 
$s_{m}$,
$k_{m}$,
$\epsilon_{m}$,
$\ell_m$,
$x_m$,
$R_m$
and 
$c_m$.

(1) Let $F$ be the canonical partition of $[0,1]$ into intervals of length $(1/3)(1/(4)s_m^{-k_m}$.
If~$D(F,(\expa{s_m^j\xi_m}:0\leq j< \base{b_m}{s_m}))<((1/3)(1/4)s_m^{-k_m}))^2$, then let 
$s_{m+1}$ be $s_m$, 
$k_{m+1}$ be $k_m$, 
$\epsilon_{m+1}$ be $\epsilon_m$, 
$\ell_{m+1}$ be $\ell_m$, 
$x_{m+1}$ be $x_m$, 
$R_{m+1}$ be $R_m$ and 
$c_{m+1}$ be $c_m$.
\medskip

(2) Otherwise, let $c$ be $c_m+1.$ Let $s$ be the $c$th element in the enumeration of $M$. Let $n$ be
maximal less than $c$ such that $s$ is also the $n$th element in the enumeration of $M$, or be 0 if
$s$ appears for the first time at $c$.  Take $x$ to be minimal such that there is a $y$ less than
$n$ satisfying $\forall z<n\,\phi(x,y,z)$ and $\exists z<c\,\neg\phi(x,y,z)$.  If there is none
such, then set $x$ equal to $c$.  
Let $k$ and $N$ be as defined in Lemma~\ref{3.14} for input $\epsilon=1/x$ and base $s$.  
Let $R$ be the set of bases not equal to $s$ which appear in the enumeration of $M$ at positions less than $c$.  
Let $L$ be the least integer greater than 
$\max\{x, c ,  2 s^{k}\} \log(\max(R\cup\{s\})) p(s_m,s)$,
$N$, and 
$\ell(R,s,k,1/c)$. 
If for some $r\in R$, 
$(1/c) \base{b_m}{r}\leq L+p(s_m,s)$ or $(1/x) \base{b_m}{s}\leq L+p(s_m,s)$ then 
let $s_{m+1}$ be $s_m$, 
 $k_{m+1}$ be $k_m$, 
 $\epsilon_{m+1}$ be $\epsilon_m$, 
 $\ell_{m+1}$ be $\ell_m$, 
 $x_{m+1}$ be $x_m$, 
 $R_{m+1}$ be $R_m$ and 
 $c_{m+1}$ be $c_m$.
\medskip

(3) Otherwise, 
let $s_{m+1}$ be $s$, 
$k_{m+1}$ be $k$, 
$\epsilon_{m+1}$ be $1/c$, 
$\ell_{m+1}$ be $L$  
$x_m$ be $x$, 
$R_{m+1}$ be $R$ and 
$c_{m+1}$ be $c$.
\smallskip
 
Let $a_{m+1}$ be minimal such that there is an $s_{m+1}^{k_{m+1}}$-adic subinterval of
$[\xi_{m},\xi_{m}+(s_{m}^{k_m})^{-\base{b_{m}}{s_{m}^{k_m}}})$ of length $(s_{m+1}^{k_{m+1}})^{-\base{a_{m+1}}{s_{m+1}^{k_{m+1}}}}$
and let  
$[\eta_{m+1},\eta_{m+1}+(s_{m+1}^{k_{m+1}})^{-\base{a_{m+1}}{s_{m+1}^{k_{m+1}}}})$ be the leftmost such.
Let $\stilde$ be $s_{m+1}^{k_{m+1}}-1$ if $s_{m+1}$ is odd and be $s_{m+1}^{k_{m+1}}-2$ otherwise.  
Let $T$ and $\delta$ be as defined in Lemma~\ref{3.8} for input $\epsilon=(\epsilon_{m+1}/10)^4$.
Let~$b_{m+1}$  be $a_{m+1}+\ell_{m+1}$.
Let $\nu$ be such that
\begin{itemize}
\item $\displaystyle{ 
    \nu=\eta_{m+1}+\sum_{j=\base{a_{m+1}}{s_{m+1}^{k_{m+1}}}+1}^{\base{b_{m+1}}{s_{m+1}^{k_{m+1}}}}  w_j(s_{m+1}^{k_{m+1}})^{-j}}$,
  for some $(w_1,\dots,w_{\ell_{m+1}})$ in $\digits{\stilde}{\ell_{m+1}}$.

\item $A(\nu,R_{m+1},T,a_{m+1},\ell_{m+1})/\base{\ell_{m+1}}{\max(R_{m+1})}^2<\delta$.

\item $\nu$ minimizes
  $D(F,(\expa{s_{m+1}^j\nu}:\base{a_{m+1}}{s_{m+1}}<
  j\leq \base{b_{m+1}}{s_{m+1}}))$ among the $\nu$ satisfying the first two conditions, for $F$ as
  defined in clause  (1).  If there is more than one minimizer, take the least such for $\nu$.
\end{itemize}
\nopagebreak[4] We define $\xi_{{m+1}}$ to be $\nu$.  This ends the
description of stage $m+1$.  
\medskip

We verify that the construction succeeds.  
Let $m+1$ be a stage.  If clause (1) or (2) applies, let $m_0$ be the greatest stage less than or
equal to $m+1$ such that $c_{m_0}=c_{m_0+1}=\cdots=c_{m+1}$.  During stage $m_0$, $k_{m_0}$ and
$\ell_{m_0}$ were chosen to satisfy the conditions to reach clause (3).  
Note that since
$b_m>b_{m_0}$ the conditions in clause (2) apply to $b_m$ in place of $b_{m_0}$: 

$(1/c_{m+1}) b_m > \ell_{m+1} +p(s_{m_0-1},s_{m+1})$ and
$(1/x_{m+1}) b_m > \ell_{m+1} +p(s_{m_0-1},s_{m+1})$. 
Then,  $\ell_{m+1}$ is greater than 
$\max\{x_{m+1},  c_{m+1}, 2 s_{m+1}^{k_{m+1}}\} \log(\max(R_{m+1}\cup\{s_{m+1}\}) p(s_{m_0-1},s_{m+1})$,
$N$, and 
$\ell(R_{m+1},s_{m+1},k_{m+1},1/c_{m+1})$, where $N$ is determined during stage $m_0$.  
If clause (3) applies, then the analogous conditions hold by construction.

Stage $m+1$ determines the $s_{m+1}^{k_{m+1}}$-adic subinterval
$[\eta_{m+1},\eta_{m+1}+(s_{m+1}^{k_{m+1}})^{-\base{a_{m+1}}{s_{m+1}^{k_{m+1}}}})$ of the interval
provided at the end of stage $m$.  The existence of this subinterval is ensured by Lemma~\ref{3.19}.
The stage ends by selecting the rational number $\nu$.  The existence of an appropriate $\nu$ is
ensured by Lemma~\ref{3.17} with the inputs given by the construction.  It follows that $\xi$ is
well defined as the limit of the $\xi_{m}$.

Let $s$ be a base that appears in the enumeration of $M$ at or before $c_{m+1}$.  
There are two possibilities for $s$ during stage $m$: either it is an element of $R_{m+1}$ or it is
equal to $s_{m+1}$.
Suppose first that $s\in R_{m+1}$.  $\xi_{m+1} = \nu$ was chosen so that
$A(\nu,R_{m+1},T,a_{m+1},\ell_{m+1})/\base{\ell_{m+1}}{s}^2<\delta$.  
By~Definition~\ref{3.16},
$\displaystyle{ A(\nu,R_{m+1},T,a_{m+1},\ell_{m+1}) }$ is equal to $\displaystyle{\sum_{t\in  T}\;\sum_{r\in R_{m+1}}\; \Bigl|\sum_{j=\base{a_{m+1}+1}{r}}^{\base{b_{m+1}}{r}} 
 e(r^j t \nu)\Bigr|^2 }$.
Hence,
$\displaystyle{(1/\base{\ell_{m+1}}{s}^2) \sum_{t\in T}\,\Bigl|\sum_{j=\base{a_{m+1}+1}{s}}^{\base{b_{m+1}}{s}} e(s^j t \nu)\Bigr|^2<\delta.
}$ 
By choice of $T$ and $\delta$, Lemma~\ref{3.8} ensures
\[
D(s^j\nu:\base{a_{m+1}}{s}<j\leq \base{b_{m+1}}{s})<(\epsilon_{m+1}/10)^4.
\]
By definition of $\xi$, $\xi \in[\nu,\nu+(s_{m+1}^{k_{m+1}})^{-\base{b_{m+1}}{s_{m+1}^{k_{m+1}}}})$.  By Lemma~\ref{3.9}, we
conclude that
\[
D(s^j\xi:\base{a_{m+1}}{s}< j\leq \base{b_{m+1}}{s})<\epsilon_{m+1}.
\]
By Remark~\ref{3.20}, $\base{a_{m+1}}{s}-\base{b_m}{s}$ is less than or equal
to $p(s_m,s_{m+1})$.  By construction, $\epsilon_{m+1}\,\base{\ell_{m+1}}{s}$ is greater than
$p(s_m,s_{m+1})$.  By Lemma~\ref{3.3}
\[
D(s^j\xi:\base{b_{m}}{s}< j\leq \base{b_{m+1}}{s})<2\epsilon_{m+1}.
\]

The second possibility is that $s$ is equal to $s_{m+1}$.  Again, consider the selection of the rational number $\nu$
during stage $m+1$.  By Lemma~\ref{3.17}, more than half of the eligible candidates satisfy the
inequality
$A(\nu,R_{m+1},T,b_{m},\ell_{m+1})/\base{\ell_{m+1}}{\max(R_{m+1})}^2<\delta$.  
By Lemma~\ref{3.14}, more than half the candidates satisfy
\[
D(\expa{s^j\nu}:\base{a_{m+1}}{s}< j\leq \base{b_{m+1}}{s})<1/x_{m+1}.
\]
By choice of $\xi_{m+1}$,
\[
D(F,(\expa{s^j\xi_{m+1}}:\base{a_{m+1}}{s}< j\leq
\base{b_{m+1}}{s})<1/x_{m+1}.
\] 
By Lemma~\ref{3.1},
\[
D(\expa{s^j\xi_{m+1}}:\base{a_{m+1}}{s}< j\leq
\base{b_{m+1}}{s})<3 x_{m+1}^{-1/2}.
\]
By construction $(1/x_{m+1}) \base{\ell_{m+1}}{s}$ is greater than $p(s_m,s)$ and so, as
above,
\[
D(\expa{s^j\xi_m}:\base{b_{m}}{s}< j\leq
\base{b_{m+1}}{s})<6 x_{m+1}^{-1/2}.
\]
By Lemma~\ref{3.9}, 
\[
D(\expa{s^j\xi}:\base{b_{m}}{s}< j\leq
\base{b_{m+1}}{s})<10(6x_{m+1}^{-1/2})^{1/4}.
\]
Further, $\xi_{{m+1}}$ is chosen so that in 
$(\expa{(s^{k_{m+1}})^j\xi_{m+1}}:\base{a_{m+1}}{s^{k_{m+1}}}<j\leq\base{b_{m+1}}{s^{k_{m+1}}})$  no
element belongs to
$[1-s^{k_{m+1}},1]$.  As $\xi\in[\xi_{m+1},\xi_{m+1}+s^{-\base{b_{m+1}}{s}}),$ the same holds for $\xi$. 
Since $\base{b_{m+1}}{s^{k_{m+1}}}-\base{b_{m}}{s^{k_{m+1}}}>2 s^{k_{m+1}} p(s_m,s)$, 
Lemma~\ref{3.5} applies and so
\[
D(\expa{s^j\xi}:\base{b_{m}}{s}<j\leq\base{b_{m+1}}{s}) )\geq 1/(2s^{k_{m+1}}).
\]
Stages subsequent to $m+1$ will satisfy the same inequality until the first stage for which
$D(\expa{s_m^j\xi_m}:0\leq j< \base{b_m}{s_m})\geq 1/(4s_m^{k_m})$.  By a
direct counting argument, there will be such a stage and during that stage clause~(1) cannot apply.
Similarly, clause (2) cannot apply for indefinitely many stages, as the values of $b_m$ are
unbounded.  It follows that $\lim_{m\to\infty}c_{m+1}=\infty$.

If $\phi(s)$ is true, then for each $x$, there are only finitely many stages during which $s_m=s$
and $x_m=x$.  Let $\epsilon$ be greater than $0$.  There will be a stage $m_0$ such that for all $m$
greater than $m$, $\epsilon>2\epsilon_m$ and, if $s=s_m$ then $\epsilon>10(6x_{m+1}^{-1/2})^{1/4}$.
By construction, Lemma~\ref{3.4} applies to conclude $\lim_{N\to\infty}D( \expa{s^j\xi_m}:0\leq
j< N)\leq 2\epsilon$.  By applying Lemma~\ref{3.9}, we conclude that $\xi$ is normal to base $s$.

If $\phi(s)$ is not true, then let $x$ be minimal such that $\forall y \exists z\neg\phi(s,x,y,z)$.
There will be infinitely many $m+1$ such that $s=s_{m+1},$ $x=x_{m+1}$ and $k_{m+1}$ is the $k$
associated with $s$ and~$x$.  As already discussed, each of these stages will be followed by a later
stage $m_1$ such that
\[
D(\expa{s^j\xi}:0\leq j< \base{b_{m_1}}{s})\geq 1/(4s^{k}).
\]
Hence $\xi$ is not normal to base $s$.

\subsection{Proof of Theorem \ref{1}}

\begin{theoremU}
(1) The set  of indices for computable real numbers which are normal to at least one base is $\Sigma^0_4$-complete. 
(2) The set of real numbers that are normal to at least one base is $\mathbf \Sigma^0_4$-complete.
\end{theoremU}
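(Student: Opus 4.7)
The plan is to combine an elementary syntactic analysis for the upper bound with Theorem~\ref{2} for the lower bound. For the upper bound, I would note that by Definition~\ref{2.2} normality of $\xi$ to base $r$ reads $\forall \epsilon > 0\; \exists n\; \forall N \geq n\; D(\expa{r^j\xi}:0\leq j<N) < \epsilon$, and by Lemma~\ref{3.1} together with Remark~\ref{3.2} the condition on $D$ can be replaced by a condition on discrepancy against a computable finite partition of $[0,1]$, which has a bounded-quantifier matrix. Thus normality to a fixed base is $\Pi^0_3$ in $(\xi,r)$ for (1) and $\mathbf{\Pi}^0_3$ in $(\xi,r)$ for (2). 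Prefixing one existential quantifier over bases $r \in M$ places the set of reals normal to at least one base in $\Sigma^0_4$ for part~(1) and $\mathbf{\Sigma}^0_4$ for part~(2); it suffices to quantify over $M$ because normality is preserved under multiplicative dependence.

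For the hardness in (1), let $S \subseteq \N$ be an arbitrary $\Sigma^0_4$ set and write $S = \{n : \exists x\, \psi(n,x)\}$ with $\psi$ a $\Pi^0_3$ formula. I would fix a computable bijection $\beta: M \to \N$ and form the $\Pi^0_3$ formula $\phi(n,r) \equiv \psi(n,\beta(r))$. For each $n$ the set $R_n = \{r \in M : \phi(n,r)\}$ is a $\Pi^0_3$ subset of $M$ whose defining formula depends computably on $n$. Applying the uniform version of Theorem~\ref{2} yields a computable real $\xi_n$, with index depending computably on $n$, such that for every $r \in M$, $\xi_n$ is normal to $r$ iff $r \in R_n$. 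Since $R_n \neq \emptyset$ iff $\exists x\, \psi(n,x)$, the map $n \mapsto (\text{index of }\xi_n)$ is the desired computable reduction from $S$ into the set of indices of computable reals normal to at least one base.

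For the boldface statement (2), I would relativize the same argument. Given an arbitrary $\mathbf{\Sigma}^0_4$ set $A \subseteq \R$, write it as $\{x : \exists r \in M\; \phi(x,r)\}$ where $\phi(x,r)$ is $\Pi^0_3$ in the real parameter $x$, again using the encoding through $\beta$. The construction in the proof of Theorem~\ref{2} is effective in its $\Pi^0_3$ input, so running it with $x$ as an oracle produces a continuous map $x \mapsto \xi(x)$ with $\xi(x)$ normal to base $r$ iff $\phi(x,r)$ holds. This $x \mapsto \xi(x)$ is a continuous Wadge reduction from $A$ into the set of reals normal to at least one base, establishing $\mathbf{\Sigma}^0_4$-hardness and (together with the upper bound and the characterization of $\mathbf{\Sigma}^0_4$-completeness stated in the introduction) completeness. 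The mathematical content is carried entirely by Theorem~\ref{2}; the main point to verify here is that its construction relativizes continuously to a real parameter, which will follow automatically from its uniformly effective character.
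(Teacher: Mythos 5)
Your proposal is correct and follows essentially the same route as the paper: both reduce the hardness claim to Theorem~\ref{2} by re-indexing a $\Pi^0_3$ matrix over the enumeration of $M$, so that ``normal to at least one base'' becomes an existential quantifier over that matrix, and both handle the boldface case by relativizing the construction. The only technical difference worth noting is that the paper routes the $\mathbf{\Sigma}^0_4$-hardness argument through Baire space $\N^\N$ (where the Borel levels have a clean arithmetical syntax and oracle computations are unproblematic), whereas you take the source set $A$ to be a subset of $\R$ directly; that should be restated with $A\subseteq\N^\N$, since relativizing the construction of Theorem~\ref{2} to an oracle $x\in\N^\N$ is what makes the resulting reduction manifestly continuous, while feeding in a real via its expansion would need extra care at the dyadic rationals.
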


To prove item (1) we must exhibit a computable function $f$, taking $\Sigma^0_4$ sentences (no free
variables) to indices for computable real numbers, such that for any $\Sigma^0_4$ sentence $\psi$, $\psi$
is true in the natural numbers if and only if the computable real number named by $f(\psi)$ is normal to
at least one base.  Let $\psi$ be a $\Sigma^0_4$ sentence and let $\phi$ be the $\Pi^0_3$ formula such
that $\psi=\exists w\phi(w)$.
Let $M$ be the set of minimal representatives of the multiplicative dependence equivalence
classes and fix the computable enumeration of $M=\{s_1,s_2,\dots\}$ (as in the proof of Theorem~\ref{2}).  Consider
the $\Pi^0_3$ formula $\phi^*$ such that $\phi^*(s_w)$ is equivalent to $\phi(w)$.  By
Theorem~\ref{2}, there is a computable real $\xi$ such that for all $s_w$, $\xi$ is normal to base
$s_w$ if and only if $\phi^*(s_w)$ is true, if and only if $\phi(w)$ is true.  Thus, $\xi$ is normal
to at least one base if and only if there is a $w$ such that $\phi(w)$ is true, if and only if 
$\psi=\exists w\phi(w)$ is true.  In Theorem~\ref{2}, $\xi$ is obtained uniformly from $\phi^*$,
which was obtained uniformly from $\phi$.  The result follows.

For item (2)  recall that a subset in $\R$  is $\bSigma^0_4$-complete if
it is $\bSigma^0_4$ and it is hard for $\bSigma^0_4$.  To prove hardness of subsets of $\R$ at levels in
the Borel hierarchy it is sufficient to consider subsets of Baire space, $\N^\N$ because there is a
continuous function from $\R$ to $\N^\N$ that preserves the levels. 
The Baire space admits a syntactic representation of the levels of Borel hierarchy in arithmetical terms. 
A subset $A$ of $\N^\N$ is $\bSigma^0_4$ if and only if 
there is a  parameter $p$ in $\N^\N$ and 
a $\Sigma^{p}_4$ formula $\psi(x,p)$, where $x$ is a free variable,
such that for all $x\in\N^\N$, $x\in A$ if and only if $\psi(x,p)$ is true.  
A subset $B$ of $\R$ is hard for $\bSigma^0_4$ if for every 
$\bSigma^0_4$ subset $A$ of $\N^\N$ there is a
continuous function $f$ such that for all 
$x\in\N^\N$, $x\in A$ if and only if $f(x)\in B$.
Consider a $\bSigma^0_4$ subset $A$ of the Baire space
defined by a $\Sigma^{p}_4$ formula $\psi(x,p)$, where $x$ is a free variable. 
The same function given for item (1) but now relativized to $x$ and $p$  yields a real number  $\xi$
such  that  $\psi(x,p)$ is true if and only if   $\xi$ is normal to at least one base.
This gives  the required  continuous function $f$ satisfying $x\in A$ if and only $f(x)$ is normal to at least one base.

\subsection{Proof of Theorem \ref{3}}

\begin{theoremCi}
  For any $\Pi^0_3$ formula $\phi$ there is a computable real number $\xi$ such that for any base $r\in M$,
  $\phi(\xi,r)$ is true if and only if $\xi$ is normal to base $r$.
\end{theoremCi}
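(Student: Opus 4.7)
The plan is to derive Theorem~\ref{3} from Theorem~\ref{2} by a standard application of Kleene's recursion theorem. Recall that Theorem~\ref{2} supplies a uniform computable procedure which, given a $\Pi^0_3$ formula $\psi(r)$ with a single free base variable, outputs an index for a computable real $\xi_\psi$ that is normal to base $r$ if and only if $\psi(r)$ holds. I would exploit this uniformity to obtain a self-referential fixed point where the constructed real is substituted back into the governing formula.

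First I would clarify what $\phi(\xi,r)$ means when $\xi$ is a computable real. Given an index $e$ of a computable function that generates a Cauchy sequence of rationals converging to a real $\xi_e$ (with a fixed modulus of convergence, say $2^{-n}$), the statement $\phi(\xi_e,r)$ unfolds, uniformly in $e$ and $r$, to a $\Pi^0_3$ formula of arithmetic; call this formula $\psi_e(r)$. This is the standard fact that an arithmetically definable property of reals lifts to the same level of the arithmetic hierarchy on indices of computable reals, and the lifting is effective in the index.

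Next, by the uniform version of Theorem~\ref{2}, there is a total computable function $h\colon\N\to\N$ such that, for every $e$, $\xi_{h(e)}$ is a computable real which is normal to base $r$ if and only if $\psi_e(r)$, that is, $\phi(\xi_e,r)$, holds. By Kleene's recursion theorem applied to $h$, there exists an index $e_0$ such that $\xi_{e_0}$ and $\xi_{h(e_0)}$ are the same computable real (equal as real numbers, with the same computable approximation). Setting $\xi=\xi_{e_0}$, for every base $r\in M$ we obtain that $\xi$ is normal to base $r$ if and only if $\phi(\xi,r)$ is true, which is precisely the conclusion.

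The main obstacle is the first step: verifying that substituting a computable real into $\phi$ yields a $\Pi^0_3$ predicate uniformly in the index of that real. One must arrange that the evaluation of $\phi$ at $\xi_e$ accesses $\xi_e$ only through rational approximations with a prescribed modulus, so that querying $\xi_e$ introduces only bounded quantifiers and preserves the outer $\forall\exists\forall$ structure. Once this coding is fixed, the remainder is a cosmetic application of the recursion theorem to the uniform construction delivered by Theorem~\ref{2}.
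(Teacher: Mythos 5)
Your proposal matches the paper's proof essentially step for step: both lift $\phi$ to a $\Pi^0_3$ predicate on indices of computable reals, invoke the uniformity in Theorem~\ref{2} to obtain a total computable map $h$, and then apply Kleene's recursion theorem to produce a fixed point $e_0$ whose real is the desired $\xi$. The one point you gesture at but leave implicit is that $\psi_e$ should include the $\Pi^0_2$ conjunct asserting that $e$ codes a total computable real (so that $\psi_e$ is a genuine $\Pi^0_3$ formula for every $e$, not just good ones); the paper makes this explicit, and it is harmless since the output of Theorem~\ref{2} is always total, so the recursion-theorem fixed point automatically codes a total object.
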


The proof  follows from Theorem~\ref{2} by an application of the Kleene Fixed Point Theorem~\cite[see][Chapter~11]{Rog87}.
Let $\phi$ be a $\Pi^0_3$ formula with two free variables, one ranging over $\N^\N$ and the other
ranging over $\N$.  Let $\Psi_e$ be a computable enumeration of the partial computable functions
from $\N$ to $\N$.  The condition ``$\Psi_e$ is a total function and $\phi(\Psi_e,r)$'' is a
$\Pi^0_3$ property of $e$ and $r$.  By Theorem~\ref{2}, there is a computable function which on
input a $\Pi^0_3$ formula $\theta$ produces a (total) computation of a real $\xi_\theta$ which is
normal to base $r\in M$ if and only if $\theta(r)$ is true.  In particular, there is a computable
function $f$ such that for every $e$, for all $r\in M$,
\[
\Psi_e \text{ is a total function and } \phi(\Psi_e,r) \text{ if and only if } \Psi_{f(e)} \text{is
  normal to base $r$.}
\]
Furthermore, for every $e$, $\Psi_{f(e)}$ is total.  By the Kleene Fixed Point Theorem, there is an
$e$ such that $\Psi_e$ is equal to $\Psi_{f(e)}.$ For this $e$, for all $r\in M$,
\[
\phi(\Psi_e,r) \text{ if and only if } \Psi_e \text{ is normal to base $r$.}
\]
Then, $\xi=\Psi_e$ satisfies the condition of the Theorem.

\subsection{Proof of Theorem \ref{4}}

\begin{theoremT}
  Fix a base $s$.  There is a computable function $f:\N\to\Q$ monotonically decreasing to $0$ such
  that for any function $g:\N\to\Q$ monotonically decreasing to $0$ there is an absolutely normal
  real number $\xi$ whose discrepancy for base $s$ eventually dominates $g$ and whose discrepancy
  for each base multiplicatively independent to $s$ is eventually dominated by $f$.  Furthermore,
  $\xi$ is computable from $g$.
\end{theoremT}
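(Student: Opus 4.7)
The plan is to adapt the staged construction from Theorem~\ref{2}, arranging each stage to simultaneously lower the discrepancy at a growing finite set of bases multiplicatively independent to $s$ and to force a lower bound on the $s$-discrepancy that exceeds $g$. The key observation is that if $\xi_{m+1}$ extends an $s^{k_{m+1}}$-adic rational by a block of digits drawn from $\digits{\stilde}{\ell}$, where $\stilde=s^{k_{m+1}}-1$ or $s^{k_{m+1}}-2$, then the resulting expansion avoids an $s^{k_{m+1}}$-adic interval of length $s^{-k_{m+1}}$, so by Lemma~\ref{3.5} the block contributes at least $1/(2s^{k_{m+1}})$ to the $s$-discrepancy, while by Lemma~\ref{3.17} the block can be chosen so that the Weyl sums against any prescribed finite set $R$ of bases multiplicatively independent to $s$ are small. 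A single stage can therefore accomplish both tasks at once: pushing $D_s$ up and pushing $D_r$ down for each $r\in R$.

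In detail, I would fix a computable enumeration $(r_i)_{i\ge 1}$ of those elements of $M$ multiplicatively independent to $s$, a computable sequence $\epsilon_m=1/m$ with associated Lemma~\ref{3.8} parameters $T_m,\delta_m$, and set $R_m=\{r_1,\ldots,r_m\}$. At stage $m+1$, given the $s^{k_m}$-adic rational $\xi_m$ of precision $\base{b_m}{s^{k_m}}$, I would: choose $k_{m+1}$ to be the least integer exceeding $k_m$ with $s^{k_{m+1}}<1/(2g(\base{b_m}{s}))$; choose $\ell_{m+1}$ large enough to exceed every $\ell_0$ demanded by Lemmas~\ref{3.9}, \ref{3.14}, and~\ref{3.17} for parameters $R_{m+1},T_{m+1},s^{k_{m+1}},\epsilon_{m+1}$ and also to satisfy the growth condition $\ell_{m+1}\le \epsilon_{m+1}\,\base{b_m}{s^{k_m}}$ needed for Lemma~\ref{3.4}, idling through a Theorem~\ref{2}-style repeat clause to grow $b_m$ first if that last condition fails; then apply Lemma~\ref{3.17} to select $\xi_{m+1}\in\digits{\stilde_{m+1}}{\ell_{m+1}}$ whose $A$-sum against $R_{m+1}$ satisfies the $\delta_{m+1}$ bound. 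Lemmas~\ref{3.8} and~\ref{3.9} convert this into within-stage discrepancy $<\epsilon_{m+1}$ at each $r\in R_{m+1}$, and Lemma~\ref{3.4} aggregates these across stages to give $D_r(\xi)\to 0$ for every such $r$. Normality of $\xi$ to $s$ follows from $k_m\to\infty$ together with Lemma~\ref{3.4} applied to the stage-wise $s$-discrepancies, which are of order $1/s^{k_{m+1}}$; Maxfield's theorem covers bases multiplicatively dependent to $s$. For the lower bound, Lemmas~\ref{3.5} and~\ref{3.3} yield $D(\expa{s^j\xi}:0\le j<N)\ge 1/(2s^{k_{m+1}})-o(1)>g(\base{b_m}{s})\ge g(N)$ for $N$ in the range of stage $m+1$.

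The function $f$ I would define as $f(N)=2\epsilon_{m^*(N)}$, where $m^*(N)$ returns the stage that would be reached at length $N$ in the schedule obtained by running each stage $i$ at its minimum viable length, namely with $k_i=i$ and the Lemma~\ref{3.17} lower bound. The main obstacle I anticipate is verifying that this $f$, which must be specified before $g$ is revealed, uniformly dominates the $r$-discrepancies produced by any $g$-dependent execution of the construction. The key point is that the schedule $\epsilon_m=1/m$ and the minimum stage lengths depend only on $m$ and the fixed enumeration of $M$, not on $g$; a $g$ that forces a larger $k_{m+1}$ or a longer $\ell_{m+1}$ only tightens the within-stage $r$-control and stretches the stage boundaries, while the idle clause preserves the Lemma~\ref{3.4} growth hypothesis. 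Thus $f$ is a fixed computable monotonically decreasing function, the constructed $\xi$ satisfies $D(\expa{r^j\xi}:0\le j<N)\le f(N)$ eventually for every $r$ multiplicatively independent to $s$, and $\xi$ is computable from $g$ because each stage queries only finitely many values of $g$.
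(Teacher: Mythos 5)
Your outline is in the right spirit and the stage mechanics (use blocks in $\digits{\stilde}{\ell}$ with $\stilde=s^{k}-1$ or $s^{k}-2$, combine Lemma~\ref{3.5} for the lower bound on $D_s$ with Lemmas~\ref{3.17},~\ref{3.8},~\ref{3.9} for the upper bound on $D_r$, then aggregate with Lemmas~\ref{3.3} and~\ref{3.4}) do match the paper. You also correctly identify the crux: $f$ must be specified before $g$ is revealed. But your resolution of that crux has the monotonicity backwards. You define $f(N)=2\epsilon_{m^*(N)}$ from a shadow ``minimum-length'' schedule and argue that a $g$ forcing longer stages ``only tightens the within-stage $r$-control and stretches the stage boundaries.'' That is not a feature, it is the defect: the within-stage $r$-discrepancy is bounded by $\epsilon_{m+1}$ regardless of stage length, and if the $g$-dependent execution reaches position $N$ while still in an earlier stage than the shadow (because its stages are longer), then the actual $r$-discrepancy at $N$ is of order $\epsilon_{m^g(N)}$ with $m^g(N)\le m^*(N)$, hence $\epsilon_{m^g(N)}\ge\epsilon_{m^*(N)}$, which can strictly exceed $f(N)$. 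A shadow built from \emph{minimum} stage lengths gives a \emph{lower} bound on $\epsilon$ at position $N$, whereas you need an \emph{upper} bound, and a $g$-independent upper bound on stage lengths does not exist since a fast $g$ can force arbitrarily large $k_{m+1}$ and hence arbitrarily long stages. (There is also a smaller problem: requiring $k_{m+1}$ to be the least integer \emph{exceeding} $k_m$ with $s^{k_{m+1}}<1/(2g(\base{b_m}{s}))$ can have no solution, since $s^k$ increases in $k$; once $s^{k_m+1}\ge 1/(2g(\base{b_m}{s}))$ there is no valid choice, which will happen immediately when $g$ decays slowly.)

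The paper's fix is structural, and it is the piece your proposal is missing: \emph{all} of the schedule data $b_m$, $\ell_m$, $\epsilon_m$, $R_m$, and an auxiliary exponent cap $\kbar_m$ are determined by clause (1) of the construction with no reference to $g$ at all. The length $\ell_{m+1}=\ell(R_{m+1},s,\kbar_{m+1},\epsilon_{m+1})$ is computed using $\kbar_{m+1}$ rather than the actual exponent $k_{m+1}$, and clause (2) permits $k_{m+1}$ to change only when $k_{m+1}\le\kbar_{m+1}$ and the Lemma~\ref{3.14} parameter $\betterN$ fits inside the already-fixed $\ell_{m+1}$; otherwise $k_{m+1}=k_m$ is retained. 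Thus only $k_m$ and the chosen digits depend on $g$, while the stage boundaries and the $\epsilon$-schedule are identical for every $g$, and $f$ can be read off that common schedule directly. This decoupling, with $\kbar_m$ as a worst-case surrogate for the exponent, is the key device that lets a single $f$ dominate the $r$-discrepancies of every $\xi$ in the family, and your proposal needs it.
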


Let $s$ be a base. 
We define a sequence $\xi_m$, $b_m$, $k_m$, $\epsilon_m$, $\ell_m$, $R_m$ and $\kbar_m$ by stages.
$b_m$, $k_m$ and $\kbar_m$  are a positive integers, $\epsilon_m$ a positive rational number and $R_m$ a finite
set of bases multiplicatively independent to $s$.  $\xi_m$ is an $s^{k_m}$-adic rational number of
precision $\base{b_m}{s^{k_m}}$.  The real $\xi$ will be an element of
$[\xi_m,\xi_m+(s^{k_m})^{-\base{b_m}{s^{k_m}}})$.  Stage $m+1$ is devoted to extending $\xi_{m}$ so
that the discrepancy of the extension is below $\epsilon_{m+1}$ for the bases in $R_{m+1}$ and so
that the discrepancy of the extension in base $s$ is in a controlled interval above $g$.  We use
$k_{m+1}$ to enforce the endpoints of this interval. 
$\ell_m$ determines the length of the extension.

At each stage $m$ the determination of $\xi_{m+1}$ is done so that the discrepancy functions for
$\xi$ relative to bases independent to $s$ converge to $0$ uniformly, without reference to the
function~$g$.  We obtain $f$ as the function bounding these discrepancies by virtue of construction.
The variable $\kbar_m$ acts as a worse case surrogate for the exponent of $s$ used in the
construction relative to~$g$.  

\medskip

\noindent{\em Initial stage.\ } Let $\xi_0=0$, $b_0=1$, $k_0=1$, $\epsilon_0=1$, $\ell_0=0$, $R_0=\{r_0\}$ where $r_0$ is the
least base which is multiplicatively independent to $s$, and $\kbar_0=1$.

\medskip\noindent{\em Stage $m+1$.\ } 
Given 
$b_{m}$, 
$R_{m}$, 
$\epsilon_{m}$, 
$\kbar_{m}$,
$k_{m}$ and 
$\xi_{m}$ of the form $\sum_{j=1}^{\base{b_{m}}{s^{k_{m}}}}
  v_j(s^{k_{m}})^{-j}$.

  (1) Let $\nextr$ be the least number greater than the maximum element of $R_{m}$ which is
  multiplicatively independent to $s$.  If 
  $(\epsilon_m/2)\base{b_{m}}{\nextr}\geq\ell(R_{m}\cup\{\nextr\},s,\kbar_{m}+1,\epsilon_{m}/2)$ then let
  $\epsilon_{m+1}$ be $\epsilon_{m}/2$, let $R_{m+1}$ be $R_{m}\cup\{\nextr\}$ and $\kbar_{m+1}$ be
  $\kbar_{m}+1$.  Otherwise, let $\epsilon_{m+1}$ be $\epsilon_{m}$, $R_{m+1}$ be $R_{m}$ and
  $\kbar_{m+1}$ be $\kbar_{m}$.  Let $\ell_{m+1}=\ell(R_{m+1},s,\kbar_{m+1},\epsilon_{m+1})$ and let
  $b_{m+1}=b_{m}+\ell_{m+1}$.
\medskip

(2) Let $\betterk$ and $\betterN$ be as determined by Lemma~\ref{3.14} for the input value
$\epsilon=1/(4s^{k_{m}})$.  If $(\betterk\leq \kbar_{m+1})$, $(\betterN\leq\base{\ell_{m+1}}{s})$
and $(1/(2s^\betterk)>g(\base{b_{m}}{s}))$, then let $k_{m+1}$ be $\betterk$.  Otherwise, let
$k_{m+1}$ be $k_{m}$.  
\medskip

We define $\xi_{{m+1}}$ to be $\xi_{m}+\nu$, where $\nu$ is determined as follows.  
Let $\stilde$ be $s^{k_{m+1}}-1$ if $s$ is odd and be $s^{k_{m+1}}-2$ if $s$ is even. 
Let $T$ and $\delta$ be as determined in Lemma~\ref{3.8} with input $\epsilon=(\epsilon_{m+1}/10)^4$. 
Let $\nu$ be such that
    \begin{itemize}
    \item $\displaystyle{ 
    \nu=\xi_{m}+\sum_{j=1}^{\base{\ell_{m+1}}{s^{k_{m+1}}}}  w_j(s^{k_{m+1}})^{-(\base{b_{m}}{s^{k_{m+1}}}+j)}}$
    for some $(w_1,\dots,w_{\base{\ell_{m+1}}{s_{m+1}^{k_{m+1}}}})$ in\\ 
    $\digits{\stilde}{\base{\ell_{m+1}}{s_{m+1}^{k_{m+1}}}}$.

    \item $A(\nu,R_{m+1},T,\ell_{m+1})/\base{\ell_{m+1}}{\max(R_{m+1})}^2<\delta$.

  \item $\nu$ minimizes $D(F,(\expa{s^j\nu}:0\leq j<\base{\ell_{m+1}}{s}))$ among the $\nu$
    satisfying the first two conditions, where $F$ is the canonical partition of $[0,1]$ into
    intervals of length $(1/3)(1/4s^{k_{m+1}})$.  If there is more than one minimizer, take the
    least such for $\nu$.
  \end{itemize}

  We define the function $f:\N\to\Q$ as follows.  Given a positive integer $n$, let $m_n$ be such
  that $b_{m_n}\leq n<b_{m_n+1}$. Let $m_0$ be maximal such that
  $\epsilon_{m_0}\base{b_{m_n}}{\max(R_{m_0})}>b_{m_0}$.  Define $f(n)$ to be $4\epsilon_{m_0}$.  By
  construction, $\epsilon_m$ is monotonically decreasing and so $f$ is also.  Note, for all $m$,
  $\ell_m>0$ and $\lim_{m\to\infty}b_m=\infty$.  For every stage $m+1$, clause (1) sets
  $\epsilon_{m+1}$ to be $\epsilon_m/2$, unless $b_m$ is not sufficiently large.  The value of
  $\epsilon_{m+1}$ will be reduced at a later sufficiently large stage.  Thus, $\epsilon_m$ goes to
  $0$ and so does $f$.

The function $f$ is defined in terms of the sequences of values $b_m$, $R_m$ and $\epsilon_m$, which
are determined by clause (1).  The conditions and functions appearing in clause (1) are computable,
as was verified in each of the relevant lemmas.  Thus, $f$ is a computable function.

Suppose that $r$ and $s$ are multiplicatively independent.  Fix $n_0$ and $n_1$ so that $r\in
R_{n_0}$ and $\epsilon_{n_0}\base{b_{n_1}}{\max(R_{n_0})}>b_{n_0}$.  Let $n$ be any integer greater than
$b_{n_1}$ let $m_n$ be such that $b_{m_n}\leq n<b_{m_n+1}$.  By definition of $f$, there is an $m_0$ such
that $f(n)=4\epsilon_{m_0}$ and $\epsilon_{m_0}\base{b_{m_n}}{\max(R_{m_0})}>b_{m_0}$.  Since $n>n_1$, this $m_0$ is
greater than or equal to $n_0$.  By Lemma~\ref{3.8}, for each $m+1\geq m_0$,
$\xi_{m+1}$ is chosen so that $A(\nu,R_{m+1},T,b_{m},\ell_{m+1})/\base{\ell_{m+1}}{r}^2$  is
sufficiently small to ensure
\[
D(\expa{r^j\xi_m}:\base{b_m}{r}<j\leq
\base{b_{m+1}}{r})<(\epsilon_{m+1}/10)^4.
\]  
By Lemma~\ref{3.9}, for each $m$ greater than or equal to $m_0$,
\[
D(\expa{r^j\xi}:\base{b_m}{r}<j\leq
\base{b_{m+1}}{r})<\epsilon_m\leq\epsilon_{m_0}.
\]
Fix $m$ so that $\base{b_m}{r}\leq \base{n}{r}<\base{b_{m+1}}{r}$.  By a direct count,
\[
D(\expa{r^j\xi}:\base{b_{m_0}}{r}<
j\leq\base{b_m}{r})<\epsilon_{m_0}.
\]
By Lemma~\ref{3.3},
\[
D(\expa{r^j\xi}:0\leq j< \base{b_m}{r})<2\epsilon_{m_0}.
\]
And again by Lemma~\ref{3.3}, 
\[
D(\expa{r^j\xi}:0\leq j< \base{n}{r})<4\epsilon_{m_0}=f(n).
\]
Furthermore, since $\lim_{n\to\infty}f(n)=0$ we have 
$\lim_{n\to\infty} D(\expa{r^j\xi}:0\leq j<\base{n}{r})=0$.  Consequently, $\xi$ is normal base $r$.

Consider the base $s$.  During each stage $m$, the value of
$
D(\expa{s^j\xi_m}:\base{b_m}{s}<j\leq \base{b_{m+1}}{s})
$
is controlled from above and from below.
First, we discuss the lower bound on the discrepancy function for $\xi$ in base $s$.  By
construction, $\xi_{m+1}$ is obtained from $\xi_m$ by adding a rational number whose $s^{k_m}$-adic
expansion omits at least the digit $s^{k_m}-1$.  Further, the same digit $s^{k_m}-1$ in base
$s^{k_m}$ was omitted every previous stage (omitting $s^{k}-1$ in base $s^k$ precludes a length $k$
sequence of digits $s-1$ in base $s$).  Then, for any $n$ such that 
${\base{b_m}{s}}\leq n<{\base{b_{m+1}}{s}}$, 
\[
D(\expa{s^j\xi}:0\leq j< \base{n}{s})\geq
1/(2s^{k_m}).
\]
By construction, $k_m$ is defined so that $1/(2s^{k_m})>g({\base{b_m}{s}})\geq
g(n)$.  Hence,
\[
D(\expa{s^j\xi}:0\leq j< \base{n}{s})> g(n).
\]
Now, we treat the upper bound.  Let $m$ be a stage.  Let $m_0$ be the greatest stage less than or
equal to $m$ such that $k_{m_0}\neq k_{m_0-1}$.  By construction, $k_{m_0}$ and $\ell_{m_0}$ satisfy
the conditions of Lemma~\ref{3.14} with input $\epsilon$ equal to $1/(4s^{k_{m_0}})$.  Since
$\ell_m\geq\ell_{m_0}$, the same holds during stage $m$.  Consider the selection of $\nu$
during stage $m$.  By Lemma~\ref{3.17}, more than half of the eligible candidates satisfy the
inequality 
$A(\nu,R_m,T,b_{m-1},\ell_m)/\base{\ell_m}{\max(R_m)}^2<\delta$.  
By Lemma~\ref{3.14}, more than half the candidates satisfy
\[
D(\expa{s^j\nu}:1\leq j\leq \base{\ell_m}{s})<1/(4{s^k_{m_0}}).
\]
Consequently, $\xi_m$ will be defined so that
\[
D(F,(\expa{s^j\xi_m}:\base{b_{m-1}}{s}< j\leq \base{b_m}{s}))
\]
is less than $1/(4 s^k_{m_0} )$, where $F$ is as indicated in the construction.  By Lemma~\ref{3.1},  
\[
D(\expa{s^j\xi_m}:\base{b_{m-1}}{s}< j\leq
\base{b_m}{s})<3(4 s^{k_{m_0}})^{-1/2}.
\]
As already argued, $\lim_{m\to\infty}\epsilon_m=0$.  Similarly, the
values of $\kbar_m$ and the maximum element of $R_m$ become arbitrarily large as $m$ increases.  It
follows that $\lim_{m\to\infty}\ell_m=\infty$.  Since $g$ is a monotonically decreasing function and
$\lim_{n\to\infty}g(n)=0$, for every stage $m$ there will be a later stage $m_1$ such that 
$k_{m_1}>k_{m}$.  Thus, $\lim_{m\to\infty}
D(\expa{s^j\xi_m}:\base{b_{m-1}}{s}< j\leq \base{b_m}{s})=0$.
It follows from Lemma~\ref{3.9}, that $\lim_{m\to\infty}
D(\expa{s^j\xi}:\base{b_{m-1}}{s}< j\leq \base{b_m}{s})=0$, and
from Lemma~\ref{3.4} that $\lim_{N\to\infty} D(\expa{s^j\xi}:0\leq j< N)=0$.
Hence $\xi$ is normal to base $s$.  By Maxfield's Theorem, $\xi$ is normal to every base
multiplicatively dependent to $s$.  
Thus, $\xi$ is absolutely normal.

\subsection{Proof of Theorem~\ref{5}}

\begin{theoremC}
  Let $R$ be a set of bases closed under multiplicative dependence.  There are real numbers normal
  to every base from $R$ and not simply normal to any base in its complement.  Furthermore, such a real
  number can be obtained computably from $R$.
\end{theoremC}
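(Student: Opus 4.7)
The plan is to adapt the staged construction used in Theorems~\ref{2} and~\ref{4}. Fix an enumeration of $R\cap M$ whose first $m$ elements form $R_{m+1}$, and fix an enumeration of $(M\setminus R)\times\N$ in which every pair $(t,n)$ appears infinitely often. At stage $m+1$ let $(t,n)$ be the current pair and set $s=t^n$. Since $R$ is closed under multiplicative dependence and $t\in M\setminus R$, $s$ is multiplicatively independent of every element of $R_{m+1}$, so Lemma~\ref{3.17} applies with these inputs. Let $\stilde=s-1$ if $s$ is odd and $\stilde=s-2$ if $s$ is even; use Lemma~\ref{3.17} to choose a $\stilde$-adic block of length $\ell_{m+1}$ in base $s$ whose appended rational $\xi_{m+1}$ (obtained as an $s$-adic refinement of $\xi_m$ via Lemma~\ref{3.19}) satisfies $A(\xi_{m+1},R_{m+1},T,a_{m+1},\ell_{m+1})/\base{\ell_{m+1}}{\max(R_{m+1})}^2<\delta$, for $T$ and $\delta$ supplied by Lemma~\ref{3.8} with input $(\epsilon_{m+1}/10)^4$, where $\epsilon_{m+1}\to 0$ monotonically.

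Verification of normality to each $r\in R$ follows the argument of Theorem~\ref{4}: once $r\in R_m$, each subsequent extension has base-$r$ discrepancy below $\epsilon_{m+1}$ on its new digits by Lemmas~\ref{3.8} and~\ref{3.9}; Lemma~\ref{3.3} absorbs the short transitions between stages with different working bases (whose widths are bounded by Remark~\ref{3.20}), Lemma~\ref{3.4} concludes $\lim_N D(\expa{r^j\xi}:0\le j<N)=0$, and Maxfield's theorem extends normality from the representatives in $M\cap R$ to all of $R$.

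The new ingredient is denying simple normality to each $b\notin R$. Writing $b=t^n$ with $t\in M\setminus R$, the $\stilde$-adic restriction means that the base-$s=t^n$ digit $s-1$ never appears in the extension at a stage working on the pair $(t,n)$. Choosing $\ell_{m+1}$ large enough that $\base{\ell_{m+1}}{s}\ge(2s-1)\base{b_m}{s}$ (an additional but still computable length threshold alongside those of Definition~\ref{4.1}), the count of digit $s-1$ in the first $\base{b_{m+1}}{s}$ base-$s$ digits of $\xi_{m+1}$ is at most $\base{b_m}{s}$, so its frequency is at most $\base{b_m}{s}/\base{b_{m+1}}{s}<1/(2s)$, a constant strictly less than $1/s$. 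Since $\xi$ and $\xi_{m+1}$ agree on their first $\base{b_{m+1}}{s}$ base-$s$ digits, this gives $D(F,(\expa{s^j\xi}:0\le j<\base{b_{m+1}}{s}))\ge 1/(2s)$ directly, with $F$ the partition of $[0,1]$ into $s$ equal subintervals. Since $(t,n)$ recurs infinitely often, this lower bound is attained at infinitely many $N$, denying simple normality to $b=s$. Lemma~\ref{3.13} is the prototype of this counting step in the case $t=2$.

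The principal obstacle is length bookkeeping: $\ell_{m+1}$ must exceed simultaneously the Schmidt-type thresholds of Lemma~\ref{3.17} for the current $s$ and $R_{m+1}$, the thresholds needed in Lemmas~\ref{3.8} and~\ref{3.9} to convert exponential-sum bounds into discrepancy bounds, and the new bound forcing the bias for the current pair $(t,n)$. Each is monotone and computable in the data available at the start of stage $m+1$, so taking $\ell_{m+1}$ to be the maximum of a finite list of such thresholds, in the spirit of Definition~\ref{4.1}, meets all conditions simultaneously and delivers the claimed computability of $\xi$ from $R$.
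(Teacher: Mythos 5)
Your plan is close in spirit to the paper's staged construction, but the step that supplies the \emph{lower} bound on simple discrepancy has a gap that conflicts with the step that supplies the \emph{upper} bounds for $r\in R$.

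You force the bias in a single stage by taking $\ell_{m+1}$ large enough that $\base{\ell_{m+1}}{s}\ge(2s-1)\base{b_m}{s}$; that is, $b_{m+1}-b_m$ is a large multiple of $b_m$, not a small fraction of it.  This breaks the hypothesis of Lemma~\ref{3.4}, which needs $b_{m+1}-b_m\leq\epsilon b_m$.  That hypothesis is not cosmetic: during the enormous extension the construction controls only the discrepancy of the \emph{entire} new block $(\base{b_m}{r},\base{b_{m+1}}{r}]$, not of its proper prefixes.  For an $N$ partway through the block, the partial block $(\base{b_m}{r},\base{N}{r}]$ carries weight $(N-b_m)/N$, which is bounded away from zero (and can exceed $1/2$) when $\ell_{m+1}$ is comparable to or larger than $b_m$.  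Nothing in Lemmas~\ref{3.8},~\ref{3.9} or~\ref{3.17} controls the exponential sums for prefixes of the block, so the discrepancy for $r\in R$ at such $N$ cannot be bounded, and the limit $\lim_N D(\expa{r^j\xi}:0\le j<N)=0$ does not follow.  In short, the big jump destroys the very estimate you cite to get normality on $R$.

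The paper avoids this by never taking a single large jump.  Each stage's extension length is explicitly constrained to be small compared to $b_m$ (clause~(2) requires $(1/c)\base{b_m}{\max R_m}>L+p$), so Lemma~\ref{3.4} applies throughout, and the bias against the top digit accumulates \emph{across} consecutive modest stages: while clause~(1) is satisfied the construction keeps the same working base $s_m$ and omits the digit $s_m-1$ in every new block, and Lemma~\ref{3.5} (rather than a one-shot counting bound) then guarantees that the simple discrepancy of $[1-1/s_m,1]$ eventually exceeds $1/(4s_m)$, at which point clause~(1) fails and the construction advances.  If you want to keep your one-stage forcing idea you would need either to control prefix discrepancies inside a stage (which the exponential-sum machinery as used here does not do), or to break the long $(t,n)$-segment into a geometric cascade of $\epsilon_m b_m$-sized substages, which is essentially the paper's clause~(1) loop.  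Your reduction to pairs $(t,n)\in(M\setminus R)\times\N$ and your appeal to Lemma~\ref{3.13} for the base-$2$ case are both fine and match the paper's structure.
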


Let $S$ denote the set of bases in the complement of $R$.  Fix an enumeration of $S$ such that every
element of $S$ appears infinitely often.  The case in which $2$ is an element of $S$ requires
special attention and we treat it separately.

\medskip\emph{The case $2\not\in S$.\ } Assume that $2$ is not an element of $S$.  Fix an
  enumeration of $S$ in which every element of $S$ appears infinitely often.
  We define a sequence $\xi_m$, $b_m$, $s_m$, $\epsilon_m$, $\ell_m$, $R_m$ and $c_m$.  $b_m$ is a positive
  integer, $\epsilon_m$ a positive rational number and $R_m$ a finite set of bases multiplicatively
  independent to $s_m$.  $\xi_m$ is an $s_m$-adic rational number of precision $\base{b_m}{s_m}$.
  The real $\xi$ will be an element of $[\xi_m,\xi_m+s_m^{-\base{b_m}{s_m}})$.  Stage $m+1$ is
  devoted to extending $\xi_{m}$ so that the discrepancy of the extension is below $\epsilon_{m+1}$
  for the bases in $R_{m+1}$ and so that the extension in base $s_{m+1}$ omits the digit
  $s_{m+1}-1$.  $\ell_m$ determines the length of the extension.  $c_m$ is a counter to track
  progress through the enumeration of $S$ with repetitions.

  \noindent{\em Initial stage.\ } Let $\xi_0=0$, $b_0=0$, $s_0$ be the least element of $S$,
  $\epsilon_0=1$, $\ell_0=0$, $R_0=\{r_0\}$ where $r_0$ is the least element of $R$ and $c_0=1$

\noindent{\em Stage $m+1$.\ }
Given 
$\xi_{m}$ of the form $\sum_{j=1}^{\base{b_{m}}{s^{k_{m}}}} v_j(s_m^{k_{m}})^{-j}$,
$b_{m}$, 
$s_{m}$,
$\epsilon_{m}$,
$\ell_m$,
$R_m$ and $c_m$.

(1) If $D(\{[1-1/s_m,1]\},(\expa{s_m^j\xi_m}:0\leq j<\base{b_m}{s_m}))<(1/4)(1/s_m)$, then let
$s_{m+1}=s_{m}$,
$\epsilon_{m+1}=\epsilon_{m}$,
$\ell_{m+1}=\ell_m$,
and $R_{m+1}=R_m$.  
\medskip

(2) Otherwise, let $c=c_m+1$.  Let $s$ be the $c$th element in the enumeration of $S$.  Let $r$ be
the least element of $R$ not in $R_m$.  Let $L$ be the least integer greater than $\max(c\, p(s_m,s)\log(\max(R_m)), \ell(R_m\cup\{r\},s,1,1/c))$.  If~$(1/c) \base{b_m}{\max(R_m)}\leq L+p(s_m,s)$ then let $s_{m+1}$ be $s_m$, let
$\epsilon_{m+1}$ be $\epsilon_m$, let $\ell_{m+1}$ be $\ell_m$, $R_{m+1}$ be $R_m$ and $c_{m+1}$ be
$c_m$.
\medskip

(3) Otherwise, let $s_{m+1}$ be $s$, $\epsilon_{m+1}$ be $1/c$, $\ell_{m+1}$ be $L$, $R_{m+1}$ be
$R_m\cup\{r\}$ and $c_{m+1}$ be $c$.  
\medskip

Let $a_{m+1}$ be minimal such that there is an $s_{m+1}$-adic subinterval of
$[\xi_{m},\xi_{m}+s_{m}^{-\base{b_{m}}{s_{m}}})$ with measure
$s_{m+1}^{-\base{a_{m+1}}{s_{m+1}}}$ and the leftmost such subinterval be
$[\eta_{m+1},\eta_{m+1}+s_{m+1}^{-\base{a_{m+1}}{s_{m+1}}})$.  Let $\stilde$
be $s_{m+1}-1$ if $s_{m+1}$ is odd and be $s_{m+1}-2$ otherwise.  Let $T$ and
$\delta$ be as determined in Lemma~\ref{3.8} for input $\epsilon=(\epsilon_{m+1}/10)^4$.  Let $\nu$
be in $[\eta_{m+1},\eta_{m+1}+s_{m+1}^{-\base{a_{m+1}}{s_{m+1}}})$ such that

\begin{itemize}
\item $\displaystyle{ 
    \nu=\eta_{m+1}+\sum_{j=1}^{\base{\ell_{m+1}}{s_{m+1}}}  w_js_{m+1}^{-(\base{a_{m+1}}{s_{m+1}}+j)}}$,
  for some $(w_1,\dots,w_{\base{\ell_{m+1}}{s_{m+1}}})$ in\\
  $\digits{\stilde}{\base{\ell_{m+1}}{s_{m+1}}}$

\item  $A(\nu,R_{m+1},T,b_{m},\ell_{m+1})/\base{\ell_{m+1}}{\max(R_{m+1})}^2<\delta$
\end{itemize}
We define $\xi_{{m+1}}$ to be $\nu$ and $b_{m+1}$ to be $a_{m+1}+\ell_{m+1}$.  This ends the
description of stage $m+1$.

We verify that the construction succeeds.
Let $m+1$ be a stage.  If clause (1) or (2) applies during stage $m+1$, let $m_0$ be the greatest
stage less than or equal to $m+1$ such that $c_{m_0}=c_{m_0+1}=\cdots=c_{m+1}$.  During stage $m_0$,
$\ell_{m_0}$ was chosen to satisfy the conditions to reach clause (3).  Note that since
$b_m>b_{m_0}$ these conditions apply to $b_m$ in place of $b_{m_0}$: $(1/c_{m+1})
\base{b_m}{\max(R_{m_0})}>\ell_{m+1}+p(s_{m_0-1},s_{m+1})$ and $\ell_{m+1}$ is the maximum of $c_{m+1}
p(s_{m_0-1},s_{m+1})$ and $\ell(R_{m+1},s_{m+1},1,1/c_{m+1})$.  If clause (3) applies during stage
$m+1$, then the analogous conditions hold by construction.
Then, stage $m+1$ determines the subinterval
$[\eta_{m+1},\eta_{m+1}+(s_{m+1})^{-\base{a_{m+1}}{s_{m+1}}})$ of the interval provided at the end
of stage $m$.  Following that, it selects $\nu$ and finishes the stage.  The existence of an
appropriate $\nu$ is ensured by Lemma~\ref{3.17} applied to the parameters of the construction, as
anticipated in the definition of the $\ell$ function.  It follows that $\xi$ is well defined as the
limit of the $\xi_{m}$.  Further, since $\ell$ takes only positive values, $b_m$ is an increasing
function of $m$.

We show that $c_m$ goes to infinity and $\epsilon_m=1/c_m$ goes to $0$.  Consider a stage $m+1$.
By construction, no element of
$(\expa{s_{m+1}^j\xi_{m+1}}:\base{a_{m+1}}{s_{m+1}}< j\leq \base{b_{m+1}}{s_{m+1}})$ is in
$[1-1/s_{m+1},1]$.  Further, during every subsequent stage $m_1+1$ with $c_{m_1+1}=c_{m+1}$, we have
$a_{m_1+1}=b_{m_1}$, so no element of $(\expa{s_{m+1}^j\xi_{m_1+1}}:\base{b_{m_1}}{s_{m+1}}< j\leq
\base{b_{m_1+1}}{s_{m+1}})$ is in $[1-1/s_{m+1},1]$.  By Lemma~\ref{3.5}, there will be a stage
$n+1$ after $m+1$ such that $c_{n+1}=c_{m+1}$ and
\[
D(\{[1-1/s_{m+1},1]\},(\expa{s_{m+1}^j\xi_n}:0\leq j<
\base{b_n}{s_{m+1}}))\geq (1/4)(1/s_{m+1}).
\] 
Thus, clauses (1) and (2) cannot maintain the value $c_{m+1}$ indefinitely.

Suppose that $s\in S$.  There will be infinitely many stages $m$  such that $s=s_m$.  By the above, there
will be infinitely many $m$ such that $s_m=s$ and
\[
D(\{[1-1/s_{m},1]\},(\expa{s_{m}^j\xi_m}:0\leq j<\base{b_m}{s_{m}}))\geq (1/4)(1/s_{m}).
\]
 Since $\xi\in[\xi_{m},\xi_{m}+s_{m}^{-\base{b_{m}}{s_{m}}})$, the same is true for $\xi$ in place of
$\xi_m$.  It follows that $\xi$ is not simply normal to base $s$.

Suppose that $r\in R$ and $\epsilon>0$.  For all sufficiently large stages, $r\in R_{m+1}$ and
$\epsilon_{m+1}<\epsilon$.  Consider a sufficiently large stage $m+1$.
$\xi_{m+1}$ was defined to be $\nu$, which was chosen so that
$A(\nu,R_{m+1},T,a_{m+1},\ell_{m+1})/\base{\ell_{m+1}}{\max(R_{m+1})}^2<\delta$.
By Definition~\ref{3.16},
\[A(\nu,R_{m+1},T,a_{m+1},\ell_{m+1}) =\displaystyle{\sum_{t\in T}\;\sum_{r\in R_{m+1}}\;
  \Bigl|\sum_{j=\base{a_{m+1}+1}{r}}^{\base{b_{m+1}}{r}} e(r^j t \nu)\Bigr|^2 }
\]
 and so
$\displaystyle{\base{\ell_{m+1}}{r}^{-2} \sum_{t\in T}\Bigl|\sum_{j=\base{a_{m+1}+1}{r}}^{\base{b_{m+1}}{r}} e(r^j t
 \nu)\Bigr|^2<\delta.}$ 
By choice of $T$ and $\delta$, Lemma~\ref{3.8} ensures that
\[
D(r^j\nu:\base{a_{m+1}}{r}<j\leq \base{b_{m+1}}{r})<(\epsilon_{m+1}/10)^4.
\]
By definition of $\xi$, $\xi \in[\nu,\nu+(s_{m+1}^{k_{m+1}})^{-\base{b_{m+1}}{s_{m+1}^{k_{m+1}}}})$.  By Lemma~\ref{3.9}, we
conclude that
\[
D(r^j\xi:\base{a_{m+1}}{r}< j\leq \base{b_{m+1}}{r})<\epsilon_{m+1}.
\]
By construction,  $\epsilon_{m+1} \ell_{m+1}$ is greater than $\log(r)\,p(s_m,s_{m+1})$.  By Lemma~\ref{3.3}
\[
D(r^j\xi:\base{b_{m}}{r}< j\leq \base{b_{m+1}}{r})<2\epsilon_{m+1}<2\epsilon.
\]
It follows that $\xi$ is normal to base $r$.

\medskip\emph{The case $2\in S$.\ }   Removing $2$ and
retaining all of its other powers in $S$ maintains the condition of multiplicative independence
between elements of $R$ and $S$.  A small alteration in our construction during the stages that
ensure that $\xi$ is not simply normal for base $4$ will also ensure that $\xi$ is not simply normal
for base $2$, by application of Lemma~\ref{3.13}:

We change clause (2) to require that $\ell_m$ be sufficiently large so that Lemma~\ref{3.13} applies
to conclude that more than half of the base $4$ sequences of length $\ell_m$ have simple discrepancy
greater than $1/8$ in base $2$.  This requirement is added to the others that determine $\ell_m$ in
the general construction.  Then, while the value $s_m=4$ is maintained, we choose $\nu$ from among
these sequences and so that the condition on the value of $A$ on $\nu$ from the general construction
is also satisfied.  Finally, clause (1) should be changed so that in addition to the existing
condition on discrepancy in base~$4$ there is another condition that  
the simple discrepancy in base $2$ is less than $1/16$.

Even with these changes, $\xi$ is well-defined.  Lemma~\ref{3.13} shows that more than half of the
sequences $\nu$ have simple discrepancy greater than $1/8$ in base $2$.  Lemma~\ref{3.17} shows that
at least half of them satisfy the condition on the value of $A$.  Thus, there is an appropriate
$\nu$ available.  Arguing as previously, $\xi$ is not simply normal to base~$2$.
\medskip

\noindent {\bf Acknowledgements.} Becher's research was supported by CONICET and Agencia Nacional de
Promoci\'{o}n Cient\'{i}fica y Tecnol\'{o}gica, Argentina.  Slaman’s research was partially supported
by the National Science Foundation, USA, under Grant No. DMS-1001551 and by the Simons Foundation.
This research was done while the authors participated in the Buenos Aires Semester in Computability,
Complexity and Randomness, 2013.

\bibliography{bases}

\begin{thebibliography}{10}
\providecommand{\natexlab}[1]{#1}
\providecommand{\url}[1]{\texttt{#1}}
\expandafter\ifx\csname urlstyle\endcsname\relax
  \providecommand{\doi}[1]{doi: #1}\else
  \providecommand{\doi}{doi: \begingroup \urlstyle{rm}\Url}\fi

\bibitem[Brown et~al.(1985)Brown, Moran, and Pearce]{BMP85}
Gavin Brown, William Moran, and Charles E.~M. Pearce.
\newblock Riesz products and normal numbers.
\newblock \emph{The Journal of the London Mathematical Society. Second Series},
  32\penalty0 (1):\penalty0 12--18, 1985.

\bibitem[Bugeaud(2012)]{Bug12}
Yann Bugeaud.
\newblock \emph{Distribution Modulo One and Diophantine Approximation}.
\newblock Number 193 in Cambridge Tracts in Mathematics. Cambridge University
  Press, Cambridge, UK, 2012.

\bibitem[Hardy and Wright(2008)]{hardy}
G.~H. Hardy and E.~M. Wright.
\newblock \emph{An introduction to the theory of numbers}.
\newblock Oxford University Press, Oxford, sixth edition, 2008.

\bibitem[Ki and Linton(1994)]{KiLin94}
Haseo Ki and Tom Linton.
\newblock Normal numbers and subsets of {$\N$} with given densities.
\newblock \emph{Fundamenta Mathematicae}, 144\penalty0 (2):\penalty0 163--179,
  1994.

\bibitem[Kuipers and Niederreiter(2006)]{KuiNie74}
L.~Kuipers and H.~Niederreiter.
\newblock \emph{Uniform distribution of sequences}.
\newblock Dover Publications, Inc., New York, 2006.

\bibitem[Moran and Pearce(1988)]{MorPea88}
William Moran and C.~E.~M. Pearce.
\newblock Discrepancy results for normal numbers.
\newblock In \emph{Miniconferences on harmonic analysis and operator algebras},
  Proceedings of the Centre for Mathematics and its Applications, vol. 16,
  pages 203--210. Australian National University, 1988.

\bibitem[Pollington(1981)]{Pol81}
A.~D. Pollington.
\newblock The {H}ausdorff dimension of a set of normal numbers.
\newblock \emph{Pacific Journal of Mathematics}, 95\penalty0 (1):\penalty0
  193--204, 1981.

\bibitem[Rogers(1987)]{Rog87}
Hartley Rogers, Jr.
\newblock \emph{Theory of recursive functions and effective computability}.
\newblock MIT Press, Cambridge, MA, second edition, 1987.

\bibitem[Schmidt(1960)]{Sch60}
Wolfgang~M. Schmidt.
\newblock On normal numbers.
\newblock \emph{Pacific Journal of Mathematics}, 10:\penalty0 661--672, 1960.

\bibitem[Schmidt(1961/1962)]{Sch61}
Wolfgang~M. Schmidt.
\newblock \"{U}ber die {N}ormalit{\"a}t von {Z}ahlen zu verschiedenen {B}asen.
\newblock \emph{Acta Arithmetica}, 7:\penalty0 299--309, 1961/1962.

\end{thebibliography}

\end{document}